\theoremstyle{plain}
\newtheorem{thm}{Theorem}[section]
\newtheorem{cor}[thm]{Corollary}
\newtheorem{lem}[thm]{Lemma}
\newtheorem{prop}[thm]{Proposition}
\newtheorem{ques}[thm]{Question}
\theoremstyle{definition}
\newtheorem{defn}[thm]{Definition}
\newtheorem{cla}[thm]{Claim}
\numberwithin{equation}{section}
\def\R1{\widetilde{R}}
\def\T1{\widetilde{T}}
\def\dist{\operatorname{dist}}
\def\supp{\operatorname{supp}}
\def\Lip{\operatorname{Lip}}
\def\eps{\varepsilon}
\def\kap{\varkappa}
\def\diam{\operatorname{diam}}
\def\loc{\operatorname{loc}}
\def\e{\textbf{e}}
\def\XXint#1#2#3{{\setbox0=\hbox{$#1{#2#3}{\int}$}
     \vcenter{\hbox{$#2#3$}}\kern-.5\wd0}}
\begin{document}

\title[Reflectionless measures]
{Reflectionless measures for Calder\'{o}n-Zygmund operators.}

\author[B. Jaye]
{Benjamin Jaye}
\address{Department of Mathematical Sciences,
Kent State University,
Kent, OH 44240, USA}
\email{bjaye@kent.edu}

\author[F. Nazarov]
{Fedor Nazarov}
\email{nazarov@math.kent.edu}

\date{\today}

\begin{abstract}  We study the properties of reflectionless measures for a Calder\'{o}n-Zygmund operator $T$.  Roughly speaking, these are measures $\mu$ for which $T(\mu)$ vanishes (in a weak sense) on the support of the measure. We describe the relationship between certain well-known problems in harmonic analysis and geometric measure theory and the classification of reflectionless measures.  As an application of our theory, we give a new proof of a recent theorem of Eiderman, Nazarov, and Volberg, which states that in $\mathbb{R}^d$, the $s$-dimensional Riesz transform of a non-trivial $s$-dimensional measure is unbounded if $s\in (d-1,d)$.\end{abstract}

\maketitle

\section{Introduction}


Fix an integer $d\geq 2$, and let $s\in (0,d)$.  For a measure $\mu$,  the $s$-dimensional Calder\'{o}n-Zygmund operator (CZO) $T_{\mu}$ is formally defined by $T_{\mu}(f)(x) = \int_{\mathbb{R}^d} K(x-y)f(y)d\mu(y)$, where $K$ is an $s$-dimensional CZ-kernel (that is, $K$ is odd, $K(\lambda x)=\lambda^{-s} K(x)$ for $\lambda \in (0,\infty)$, and $K$ is H\"{o}lder continuous on the unit sphere).

We are primarily interested in the connection between geometric properties of a measure, and regularity properties of a CZO, such as the following:

\begin{equation}\tag{$\dagger$}\begin{split}
   &\textit{Let }\mu \textit{ be an s-dimensional measure.\! If }T_{\mu} \!\textit{ is bounded in }L^2(\mu), \\
   &\textit{then }\mu\textit{ is the zero measure}.
\end{split}\end{equation}

A measure $\mu$ is said to be $s$-dimensional if there is a constant $\Lambda >0$ such that $\mu(B(x,r))\leq \Lambda r^s$ for any ball $B(x,r)\subset\mathbb{R}^d$, and $\mathcal{H}^s(\supp(\mu))<\infty$.  The precise meaning of the boundedness of  $T_{\mu}$ in $L^2(\mu)$ is a rather standard one, namely that regularized CZOs associated with $T_{\mu}$ are bounded uniformly in terms of the regularization parameter (see Definition \ref{gooddef} below).

Our interest in this problem comes from a well-known conjecture, which predicts that the $s$-Riesz transform ($K(x)= \tfrac{x}{|x|^{s+1}}$, $x\in \mathbb{R}^d$) has the property ($\dagger$) if $s\not\in \mathbb{Z}$.  (Simple examples show that integer dimensional Riesz transforms fail to satisfy this property.)   This conjecture has been proved by Prat \cite{Pra} in the case when $s\in (0,1)$, and by Eiderman, Nazarov and Volberg \cite{ENV} if $s\in (d-1,d)$.   The remaining cases remain open.

This conjecture remains one of the outstanding problems in the subfield of geometric measure theory that seeks to understand the regularity properties of non-integer dimensional sets.  For the remainder of this paragraph, fix $s\not\in \mathbb{Z}$.  In his fundamental work, Marstrand \cite{Mar} proved that there does not exist a non-trivial measure $\mu$ whose $s$-density exists $\mu$-almost everywhere (see Theorem 14.1 of \cite{Mat}).  More recently, the focus has shifted to attempting to understand the behaviour that one can expect from a singular integral on an arbitrary $s$-dimensional set. This appears to be more difficult  than problems involving the existence of density, or, rather, the techniques developed thus far are less adequate.  Ruiz de Villa and Tolsa \cite{RT} (in a sequel to \cite{Tol}) proved that, for an $s$-dimensional measure $\mu$, the $\mu$-almost everywhere existence of the $s$-Riesz transform of $\mu$ in the sense of principal value implies that $\mu$ is zero.  The property of $L^2(\mu)$ boundedness carries (a priori) significantly less information than the existence of principal values, and all the known results (except those in \cite{Pra, ENV}) require additional hypotheses.  Vihtila \cite{Vih} proved that if $\mu$ is an $s$-dimensional measure satisfying $\liminf_{r\rightarrow 0}\tfrac{\mu(B(x,r))}{r^s}>0$ for $\mu$-almost every $x\in \mathbb{R}^d$, then the $s$-Riesz transform of $\mu$ is unbounded. The lower density assumption here is essential to the argument. The behaviour of the $s$-Riesz transform of the Hausdorff measure restricted to a well separated, self similar, $s$-dimensional fractal set is also understood, see \cite{EV, Tol11}.

Our first result reduces the question of whether a CZO $T$ has property ($\dagger$) to describing the reflectionless measures associated with $T$.  These are (non-negaive, non-atomic, possibly infinite) measures $\mu$ for which $T_{\mu}(1)$ (considered in a suitable weak sense) vanishes on the support of $\mu$ (see Definition \ref{refldef}).  The study of reflectionless measures is of inherent interest in harmonic analysis, see for example \cite{MPV, TV}.

\begin{thm}\label{reducerefl}  If the only reflectionless measure for a CZO $T$ is the zero measure, then property $\operatorname{(}\dagger\operatorname{)}$ holds.
\end{thm}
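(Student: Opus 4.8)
The plan is to prove the contrapositive: starting from a non-zero $s$-dimensional measure $\mu$ for which $T_\mu$ is bounded on $L^2(\mu)$ --- normalized so that the regularized operators $T_{\mu,\eps}$ have operator norm at most $1$, uniformly in $\eps$ --- I would construct a non-zero reflectionless measure for $T$, contradicting the hypothesis. The construction is a blow-up of $\mu$ at a carefully chosen point. Since $\mathcal H^s(\supp\mu)<\infty$ and $\mu$ has $s$-growth, $\mu$ is a finite measure, so $T_\mu 1 = T_\mu(\chi_{\supp\mu})$ is a genuine function in $L^2(\mu)$, and in fact $T_\mu 1\in\mathrm{BMO}(\mu)$ in the appropriate non-homogeneous sense; also $\mu$-a.e.\ point has positive (finite) upper $s$-density. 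Fixing a point $x_0$ that is at once a Lebesgue point of $T_\mu 1$ and a point of positive upper density, I would pick scales $r_j\downarrow 0$ with $\mu(B(x_0,r_j))\geq c\,r_j^s$, set $\phi_j(x)=(x-x_0)/r_j$, and let $\mu_j := r_j^{-s}(\phi_j)_{\#}\mu$. The $s$-growth bound is dilation-invariant, so $\mu_j(B(0,R))\leq\Lambda R^s$ for every $R$, while $\mu_j(\overline{B(0,1)})\geq c$; passing to a subsequence, $\mu_j\rightharpoonup\nu$ vaguely, and $\nu$ is a non-zero (indeed $\nu(\overline{B(0,1)})\geq c$), non-atomic measure with $\nu(B(x,r))\leq\Lambda r^s$.

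I would then check $T_\nu$ is bounded on $L^2(\nu)$. Since $K$ is $(-s)$-homogeneous, one has the scaling identity $T_{\mu_j,\eps}(f)=\big(T_{\mu,r_j\eps}(f\circ\phi_j)\big)\circ\phi_j^{-1}$, whence $\|T_{\mu_j,\eps}\|_{L^2(\mu_j)\to L^2(\mu_j)}\leq1$ for all $j$ and $\eps$. For $f,g\in C_c(\mathbb R^d)$ and any $\eps$ with $(\nu\times\nu)(\{|x-y|=\eps\})=0$ (all but countably many), the bounded, compactly supported integrand $K(x-y)f(y)g(x)\chi_{\{|x-y|>\eps\}}$ is $\nu\times\nu$-a.e.\ continuous, so the associated bilinear forms converge; together with $\|h\|_{L^2(\mu_j)}\to\|h\|_{L^2(\nu)}$ for $h\in C_c$, this gives $\|T_{\nu,\eps}\|_{L^2(\nu)\to L^2(\nu)}\leq1$ for such $\eps$, hence $L^2(\nu)$-boundedness of $T_\nu$.

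The heart of the argument is to show $\nu$ is reflectionless. For a compactly supported Lipschitz $\varphi$, using the oddness of $K$, put
\[
\langle T_\nu 1,\varphi\,d\nu\rangle := \tfrac12\iint K(x-y)\big(\varphi(x)-\varphi(y)\big)\,d\nu(x)\,d\nu(y),
\]
which converges absolutely --- near the diagonal because $|\varphi(x)-\varphi(y)|\lesssim|x-y|$ and $\int_{|x-y|<1}|x-y|^{1-s}\,d\nu(y)<\infty$ by $s$-growth, and off the diagonal because $\varphi$ is compactly supported --- and $\nu$ is reflectionless exactly when this pairing vanishes for every such $\varphi$. The analogous bound with $\mu_j$ in place of $\nu$ (uniform in $j$, since the $s$-growth constant is) makes the contribution of $\{|x-y|<\delta\}$ uniformly small, while vague convergence handles $\{|x-y|>\delta\}$ for all but countably many $\delta$; letting $\delta\to0$ gives $\langle T_\nu 1,\varphi\,d\nu\rangle=\lim_j\langle T_{\mu_j}1,\varphi\,d\mu_j\rangle$, and unravelling the dilation, $\langle T_{\mu_j}1,\varphi\,d\mu_j\rangle=r_j^{-s}\int (T_\mu 1)\,(\varphi\circ\phi_j)\,d\mu$. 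Writing $T_\mu 1=(T_\mu 1-m_j)+m_j$ with $m_j$ the $\mu$-mean of $T_\mu 1$ over $B(x_0,Rr_j)$ (where $\supp\varphi\subset B(0,R)$), the first piece contributes $\leq\|\varphi\|_\infty\Lambda R^s\dashint_{B(x_0,Rr_j)}|T_\mu 1-m_j|\,d\mu\to0$ because $x_0$ is a Lebesgue point, and the second piece is $m_j\int\varphi\,d\mu_j\to c\int\varphi\,d\nu$, where $c$ is the Lebesgue value of $T_\mu 1$ at $x_0$. Thus $\langle T_\nu 1,\varphi\,d\nu\rangle=c\int\varphi\,d\nu$ for every compactly supported Lipschitz $\varphi$, i.e.\ $T_\nu 1$ is (weakly) the constant $c$ on $\supp\nu$ --- which, up to the normalization discussed below, is the reflectionless condition. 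Since $\nu$ is non-zero, this contradicts the hypothesis and yields $(\dagger)$.

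The step I expect to be the main obstacle is the last one, for two reasons. First, the coordination of scales: at a Lebesgue point the local oscillation of $T_\mu 1$ is small at small scales, but $\mu$ is not assumed doubling, so one must use a non-homogeneous form of the Lebesgue differentiation theorem and check that the scales along which $\mu$ is non-degenerate are among those along which this oscillation decays. Second, the additive constant $c$: the naive blow-up only gives ``$T_\nu 1\equiv c$ on $\supp\nu$'', and if the precise definition of a reflectionless measure requires $c=0$ one must do more --- for instance, choose $x_0$ inside the zero set of $T_\mu 1$ when that set is $\mu$-non-null (the mean-zero identity $\int T_\mu 1\,d\mu=0$, a consequence of the oddness of $K$, is the natural tool), or exploit the reflection $x\mapsto-x$ together with the oddness of $K$, or appeal directly to the weak formulation in the definition, which may already quotient out constants. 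By contrast, the compactness, the scale-invariance of the kernel, and the transfer of $L^2$-boundedness to the blow-up are all routine once these points are dealt with.
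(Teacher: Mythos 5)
Your route is genuinely different from the paper's. The paper does not blow up at a Lebesgue point of $T_{\mu}(1)$; it proves a dichotomy (Proposition \ref{lowbdtheta}): either a non-trivial reflectionless measure exists (extracted by a compactness argument from rescalings of cubes where a certain coefficient $\Theta^{A,A'}_{\mu}(Q)$ is small), or every cube of non-negligible density carries a test function $\psi_Q$ with $|\langle T_{\mu}(\chi_E),\psi_Q\rangle_{\mu}|\gtrsim \ell(Q)^{s/2}$. In the second case the Riesz-system (almost-orthogonality) Lemma \ref{systemlem} converts this into the packing bound $\sum_{Q\subset P,\,Q\in\mathcal{D}_{\Delta}}\ell(Q)^s\leq C(\Delta)\ell(P)^s$, which is then played against $\mathcal{H}^s(\supp\mu)<\infty$ to force $\mu=0$. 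Your tangent-measure argument, by contrast, uses the finiteness of $\mathcal{H}^s$ only to get positive upper density a.e., and replaces the Riesz-system step by the Besicovitch differentiation theorem applied to $T_{\mu}(1)\in L^2(\mu)$. This is a legitimate and arguably more classical strategy, and your observations that Besicovitch differentiation needs no doubling and that the definition of reflectionlessness (mean-zero test functions) quotients out the additive constant $c=T_{\mu}(1)(x_0)$ both resolve the concerns you raise at the end.

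There is, however, one concrete gap: the pairing $\tfrac12\iint K(x-y)(\varphi(x)-\varphi(y))\,d\nu(x)\,d\nu(y)$ is \emph{not} absolutely convergent for a general infinite measure $\nu$ of $s$-growth, contrary to what you claim. Off the diagonal the integrand reduces (say for $x\in\supp\varphi$, $y\notin\supp\varphi$) to $\tfrac12 K(x-y)\varphi(x)$, and $\int_{|x-y|\geq 1}|x-y|^{-s}\,d\nu(y)$ can diverge logarithmically (take $\nu=\mathcal{H}^s$ on an $s$-plane). The blow-up limit $\nu$ is typically an infinite measure, so this matters. The same problem infects the passage to the limit: vague convergence handles only compactly supported continuous integrands, and the region $\{|x-y|>\delta\}$ is not compact in $y$, so "vague convergence handles $\{|x-y|>\delta\}$" is false as stated, and the uniformity in $j$ of the tail contributions is exactly what is at stake. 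The repair is the renormalization the paper builds in Section \ref{T1dist} and Lemmas \ref{taildiff}, \ref{1closetophi}, \ref{smalldiff}: restrict to mean-zero $\psi$, subtract $K(z-y)$ from the far-field kernel to gain the H\"{o}lder decay $(R/R')^{\alpha}$, and approximate $T_{\mu_j}(1)$ by $T_{\mu_j}(\chi_{B(0,R')})$ with an error $O((R/R')^{\alpha})$ uniform in $j$ before letting $R'\to\infty$. With that machinery inserted, your argument goes through; without it, the central identity $\langle T_{\nu}(1),\varphi\,d\nu\rangle=\lim_j\langle T_{\mu_j}(1),\varphi\,d\mu_j\rangle$ is not justified and the object on the left is not even defined.
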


It is possible to deduce certain statements in the opposite direction to Theorem \ref{reducerefl}.  For instance, if a CZO $T$ exhibits a non-trivial compactly supported reflectionless measure with zero lower s-density, then property ($\dagger$) fails.  A thorough development of this idea will be taken up elsewhere.

 After establishing Theorem \ref{reducerefl}, we move onto describing finer properties of reflectionless measures $\mu$ for general CZOs $T$.  Our main result here, called the Collapse Lemma (see Section \ref{colsec}), roughly states that $\mu$ is porous in balls where $T_{\mu}(1)$ is large.

This result is in a sense optimal for a general CZO.  Indeed, in the recent preprint \cite{JN2}, we observed that the two dimensional Lebesgue measure of a disc is a reflectionless measure for the CZO with kernel $K(z) = \tfrac{\overline{z}}{z^2}$, $z\in \mathbb{C}$.  This yields an example where $T_{\mu}(1)$  vanishes identically on a non-porous set.  We remark that this observation was used in \cite{JN2} to construct a $1$-dimensional purely unrectifiable measure $\mu$ with respect to which the CZO with kernel $\tfrac{\bar{z}}{z^2}$ is bounded in $L^2(\mu)$.

In the third part of the paper, we specialize to the case of the $s$-Riesz transform.   We pose the following question regarding the structure of reflectionless measures for the $s$-Riesz transform:

\begin{ques}\label{reflconj}  Suppose that $\mu$ is a reflectionless measure for the $s$-Riesz transform.  
\begin{enumerate}
\item If $s\not\in \mathbb{Z}$, then is $\mu$ necessarily the zero measure?
\item If $s\in \mathbb{Z}$, and $\mu$ is Ahlfors-David regular (meaning that there is a constant $c_0$ such that $\mu(B(x,r))\geq c_0 r^s$ for all $x\in \supp(\mu)$ and $r>0$), then must $\mu$ coincide with a constant multiple of the $s$-dimensional Hausdorff measure restricted to an $s$-plane?
\end{enumerate}
\end{ques}

An affirmative answer to part (2) of this question would yield the solution to a well-known question of David and Semmes on the uniform $s$-rectifiability of an Ahlfors-David regular measure whose $s$-Riesz transform is bounded in $\mathbb{R}^d$.  The deduction of the uniform rectifiability property from the classification of reflectionless measures was carried out in detail for the Cauchy transform in our recent lecture notes \cite{JN1}.  The higher dimensional case can be derived similarly by employing the deep geometric constructions of David and Semmes in their monograph \cite{DS}.  The David-Semmes problem was solved in the case when $s=1$ and $d=2$ by Mattila, Melnikov, and Verdera \cite{MMV}.  The case when $s=d-1$ was recently settled by Nazarov, Tolsa and Volberg in \cite{NTV12}.

In \cite{JN1}, we showed that all Ahlfors-David regular reflectionless measures for the Cauchy transform coincide with a constant multiple of the $1$-dimensional Hausdorff measure of a line.

If $s\in (d-1,d)$ we can answer Question \ref{reflconj} in the affirmative.  The following proposition, combined with Theorem \ref{reducerefl}, yields a new proof of the Eiderman-Nazarov-Volberg theorem.

\begin{prop}\label{onlytrivmeas}  There are no non-trivial reflectionless measures for the $s$-dimensional Riesz transform if $s\in (d-1,d)$.
\end{prop}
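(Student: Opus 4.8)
The plan is to run an Eiderman--Nazarov--Volberg type argument, but organised around the reflectionless hypothesis and the Collapse Lemma. Write $K(x)=x|x|^{-s-1}$ for the $s$-Riesz kernel, and for a compactly supported measure $\mu$ set
\[
 p_{\mu}(x)=\int_{\mathbb{R}^d}|x-y|^{1-s}\,d\mu(y).
\]
Two elementary identities drive everything. First $\nabla_x|x-y|^{1-s}=(1-s)K(x-y)$, so $\nabla p_{\mu}=(1-s)T_{\mu}(1)$ in the sense of distributions; thus the reflectionless property is exactly the statement that $\nabla p_{\mu}$ vanishes on $\supp\mu$. Second, for $x\neq y$,
\[
 \Delta_x|x-y|^{1-s}=(1-s)(d-1-s)\,|x-y|^{-1-s},
\]
and since $s\in(d-1,d)$ \emph{both} factors $1-s$ and $d-1-s$ are negative, so the right-hand side is positive. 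Consequently $p_{\mu}$ is smooth and subharmonic on $\mathbb{R}^d\setminus\supp\mu$. Because $s-1<s$, the growth bound $\mu(B(x,r))\le\Lambda r^s$ also guarantees that $p_{\mu}$ is bounded and continuous on $\mathbb{R}^d$, with $p_{\mu}(x)\to 0$ as $|x|\to\infty$. The first thing I would do is reduce a general non-trivial reflectionless measure to the compactly supported case (or localise to a large ball, absorbing the potential of the far-away mass into a harmless subharmonic error term); this step is needed because reflectionless measures are a priori allowed to be infinite, but it should be routine given the decay of $|x|^{1-s}$.

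Assume now, for contradiction, that $\mu\neq 0$, so $p_{\mu}$ is strictly positive. A maximum-principle argument then places the maximum of $p_{\mu}$ on the support: on the unbounded component of $\mathbb{R}^d\setminus\supp\mu$ the subharmonic function $p_{\mu}$ tends to $0$ at infinity and so has no interior maximum, and likewise on each bounded component, so $M:=\max_{\mathbb{R}^d}p_{\mu}=p_{\mu}(x_0)$ for some $x_0\in\supp\mu$, with $M>0$. At $x_0$ we additionally have $\nabla p_{\mu}=0$. Heuristically this already looks contradictory: a subharmonic function attaining a boundary maximum with vanishing gradient is forbidden by the Hopf lemma unless the complement of its domain is non-porous at that point, which constrains $\supp\mu$ near $x_0$ at every scale. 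To make this into an actual contradiction I would argue quantitatively, scale by scale around $x_0$. Fixing a small radius $r$, split $\mu=\mu|_{B(x_0,r)}+\mu|_{B(x_0,r)^c}$; the potential of the outer piece is subharmonic on $B(x_0,r/2)$, hence satisfies the mean value inequality there, whereas the potential of the inner piece at $x_0$ is squeezed between $\mu(B(x_0,r))/r^{s-1}$ and $C\Lambda r$. Feeding $\nabla p_{\mu}=0$ on $\supp\mu$ into the mean value inequality for the outer potential, one locates inside $B(x_0,r)$ a ball $B'=B(x_1,\kappa r)$ centred on $\supp\mu$ where \emph{one} of the following holds: either the $s$-density $\mu(B')/(\kappa r)^s$ has dropped by a definite factor, or $|T_{\mu}(1)|$ is bounded below on a fixed proportion of $B'$ --- and in the latter case the Collapse Lemma forces $\mu$ to be porous in $B'$, which in turn makes the $s$-density drop at the next scale. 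Iterating this dichotomy produces scales $r_k\to 0$ along which the $s$-density at $x_0$ tends to $0$; but subharmonicity of the outer potential forces $p_{\mu}$ to stay close to $M$ near $x_0$, and since only the inner potential can be small this pins the $s$-density from below --- the desired contradiction. Hence $\mu=0$.

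The main obstacle is precisely the quantitative step above: extracting an honest, scale-invariant collapse of the $s$-density from the weak and essentially global reflectionless condition, in a range where $s$ may exceed $d-2$ so that $\supp\mu$ need not be removable for (super)harmonic functions and cannot simply be erased. This is exactly the point at which the subharmonicity of $|x|^{1-s}$ --- available only because $s\in(d-1,d)$ --- must be used to the hilt, and at which the Collapse Lemma is the right black box, converting the assertion ``$T_{\mu}(1)$ is large somewhere off $\supp\mu$'' (which the geometry of the maximum of $p_{\mu}$ forces) into porosity of $\mu$, the quantity that actually iterates. Verifying that the two alternatives in the dichotomy are genuinely exhaustive, with constants that survive infinitely many iterations, is the heart of the matter; by comparison the reduction to compact support is a secondary, though not completely trivial, preliminary.
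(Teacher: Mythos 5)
Your outline is an ENV-style potential-theoretic sketch, and it is quite different from what the paper does (the paper solves an extremal problem for the renormalised field $\R1_{\mu}(1)$ at a point off the support, shows via a fractional mean-value identity that the extremiser's field is constant $m_d$-a.e., and then kills the measure with a Fourier-analytic lower bound $\mu(B(x,r))\lesssim r^d\int |\R1_{\mu}(1)-\Gamma|(r+|z|)^{s-2d}dm_d$). Unfortunately your version has gaps that are not merely technical. First, the scalar potential $p_{\mu}(x)=\int|x-y|^{1-s}d\mu(y)$ is identically $+\infty$ for a general reflectionless measure: these measures may be infinite with $\mu(B(0,R))\sim R^s$, and then $\int_{|y|>1}|y|^{1-s}d\mu(y)\sim\int_1^{\infty}r^{1-s}\,dR(r^s)=\infty$. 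Truncating $\mu$ destroys the reflectionless property (which is global), and the renormalisation needed to make the integral converge (subtracting the value and the gradient of the kernel at a base point) adds an affine function to $p_{\mu}$, which destroys exactly the decay at infinity on which your maximum principle rests. This is why the paper works with the bounded, carefully renormalised vector field $\R1_{\mu}(1)$ (Cotlar's inequality, Corollary \ref{reflinfbd}) rather than with a scalar potential, and replaces $\Delta$ by the fractional mean-value identity of Lemma \ref{sharmon}. Relatedly, the reflectionless condition gives $\T1_{\mu}(1)=0$ only $\mu$-a.e.\ (in a weak sense); it does not give a pointwise gradient $\nabla p_{\mu}(x_0)=0$ at a chosen maximum point $x_0\in\supp(\mu)$, where the principal value need not even exist.

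Second, the step you yourself identify as the heart of the matter --- the scale-by-scale dichotomy and its iteration --- is not carried out, and as stated it does not close. Porosity of $\supp(\mu)$ in a ball (the output of the Collapse Lemma) does not by itself force the $s$-density to drop at the next scale: the hole produced carries no measure to begin with, so $\mu(B(x_1,\kappa r))$ is unchanged. And the proposed contradiction at the end --- that subharmonicity keeps $p_{\mu}$ near its maximum $M$ close to $x_0$ and thereby ``pins the $s$-density from below'' --- does not follow: the inner potential at $x_0$ is automatically $O(\Lambda r)$ whatever the density is, and the one-scale maximum-principle computation ($M=p_{\mathrm{in}}(x_0)+p_{\mathrm{out}}(x_0)$, $p_{\mathrm{out}}$ subharmonic, $p\le M$) only returns the trivial bound $\mu(B(x_0,r))\le C\Lambda r^s$ already contained in niceness. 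Some genuinely new mechanism is needed to convert ``$p_{\mu}$ has an interior-type maximum on $\supp(\mu)$'' into a quantitative statement that iterates; in the paper this role is played by the combination of Lemma \ref{sharmon} (which upgrades the maximum at the extremal point to \emph{constancy} of $\R1_{\mu^{\star}}(1)$ in one stroke, with no iteration) and Lemma \ref{philem}. As it stands, your argument establishes the setup of a contradiction but not the contradiction itself.
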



We are a long way from understanding the reflectionless measures for the Riesz transform well enough to answer Question \ref{reflconj} in either the affirmative or negative, but at least we are able to show that their structure is very different from reflectionless measures for general CZOs.

\begin{thm}\label{nodense}  Let $s\in (0,d)$.  The support of a reflectionless measure for the $s$-Riesz transform is nowhere dense.
\end{thm}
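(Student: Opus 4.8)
Since the support of a measure is closed, the statement is equivalent to saying that $\supp(\mu)$ has empty interior, so I would argue by contradiction and suppose there is a ball $B=B(x_{0},r_{0})\subseteq\supp(\mu)$. If $s\in(d-1,d)$ this already contradicts Proposition~\ref{onlytrivmeas} (which forces $\mu=0$), so the work lies in the range $s\in(0,d-1]$, and there the idea is to exploit a property peculiar to the Riesz kernel $K(x)=x/|x|^{s+1}$: its distributional divergence is a \emph{nonnegative} object. A short integration by parts over $\{|x|>\eps\}$ followed by $\eps\to0$ gives, as distributions on $\mathbb{R}^{d}$,
\[
\operatorname{div}K=(d-1-s)\,|x|^{-(s+1)}\quad(s<d-1),\qquad \operatorname{div}K=c\,\delta_{0}\ \text{ with }c=\mathcal{H}^{d-1}(S^{d-1})>0\quad(s=d-1).
\]
When $s<d-1$ the density $|x|^{-(s+1)}$ is locally integrable (as $s+1<d$), so in either case $\operatorname{div}K$ is a nonnegative, locally finite measure.

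Next I would set $\sigma:=\operatorname{div}\bigl(T_{\mu}(1)\bigr)=(\operatorname{div}K)\ast\mu$. The first point is that $\sigma$ is an honest nonnegative locally finite measure, well-defined even though $T_{\mu}(1)$ is, for a measure of infinite mass, only specified up to an additive constant — a constant that the divergence annihilates. Moreover the convolution converges absolutely: $\operatorname{div}K$ decays at the rate $|x|^{-(s+1)}$, and since $s+1>s$ this is reconciled with the $s$-growth bound $\mu(B(x,r))\lesssim r^{s}$ satisfied by a reflectionless measure. Unwinding the convolution against a nonnegative test function and using Fubini, one finds that $\sigma$ is absolutely continuous with respect to Lebesgue measure, with density $(d-1-s)\int|x-y|^{-(s+1)}\,d\mu(y)$ at Lebesgue-a.e.\ point when $s<d-1$, whereas $\sigma=c\,\mu$ when $s=d-1$. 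Now the two halves of the hypothesis collide. On the one hand the reflectionless property says $T_{\mu}(1)$ vanishes weakly on $\supp(\mu)$, so testing against $\nabla\varphi$ with $\varphi\in C_{c}^{\infty}(B)$ — legitimate because $B\subseteq\supp(\mu)$, and the additive-constant ambiguity drops out under the gradient — gives $\langle\sigma,\varphi\rangle=0$ for all such $\varphi$, i.e.\ $\sigma$ restricted to $B$ is the zero measure. On the other hand $B\subseteq\supp(\mu)$ forces $\sigma$ to be nonzero on $B$: for $s=d-1$ because $\mu(B)>0$; and for $s<d-1$ because for \emph{every} $x\in B\subseteq\supp(\mu)$ and every $\rho>0$ one has $\int|x-y|^{-(s+1)}\,d\mu(y)\ge\rho^{-(s+1)}\mu(B(x,\rho))>0$, so the Lebesgue density of $\sigma$ is strictly positive at a.e.\ point of $B$, whence $\sigma(B)>0$. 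This contradiction proves the theorem.

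The step I expect to demand the most care is the passage from the reflectionless condition as formulated in Definition~\ref{refldef} (vanishing of $T_{\mu}(1)$ ``in a suitable weak sense'' on $\supp(\mu)$) to the precise distributional identity $\langle T_{\mu}(1),\nabla\varphi\rangle=0$ for $\varphi\in C_{c}^{\infty}(B)$, together with the bookkeeping needed to make sense of $T_{\mu}(1)$, and of its divergence $(\operatorname{div}K)\ast\mu$, as genuine distributions for an $s$-dimensional — hence possibly infinite — measure; the remaining ingredients, namely the distributional computation of $\operatorname{div}K$ and the pointwise lower bound for the Riesz potential of order $s+1$ at points of $\supp(\mu)$, are routine. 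Finally, this argument uses the Riesz structure essentially: for the kernel $\bar z/z^{2}$ on $\mathbb{C}$ the analogous computation yields the distribution $1/z^{2}$, which is neither nonnegative nor locally integrable — exactly in keeping with the planar Lebesgue measure of a disc being reflectionless for that operator, as recalled in the introduction.
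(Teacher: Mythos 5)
Your use of the positivity of $\operatorname{div}K$ for the Riesz kernel with $s\le d-1$ is exactly the mechanism the paper exploits (it appears in the final lemma of the section on weak porosity), and your computation of $\operatorname{div}K$, the local integrability of $|x|^{-(s+1)}$, and the strict positivity of $(\operatorname{div}K)\ast\mu$ on a ball contained in $\supp(\mu)$ are all correct. But there is a genuine gap at the step you yourself flag as ``demanding the most care'' and then dismiss as bookkeeping: the passage from the reflectionless condition to $\int_{B}T_{\mu}(1)\cdot\nabla\varphi\,dm_{d}=0$. Definition \ref{refldef} says $\langle T_{\mu}(1),\psi\rangle_{\mu}=0$ for mean-zero Lipschitz $\psi$ --- a pairing against $d\mu$. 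What you need is a pairing against $dm_{d}$ over $B$. Since $\mu$ is an $s$-dimensional measure with $s<d$, it is (typically) mutually singular with Lebesgue measure, and no rearrangement of the definition converts $\mu$-a.e.\ vanishing of $\widetilde{T}_{\mu}(1)$ into $m_{d}$-a.e.\ vanishing on $\supp(\mu)$. The statement you need is precisely Corollary \ref{lebcor} ($\widetilde{T}_{\mu}(1)(x)=0$ for $m_{d}$-almost every $x\in\supp(\mu)$), which is not a formal consequence of reflectionlessness but a consequence of the Collapse Lemma --- the main technical engine of the paper. A priori, without that machinery, $T_{\mu}(1)$ could be large $m_{d}$-a.e.\ on $\supp(\mu)$ while vanishing $\mu$-a.e.

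If you do invoke Corollary \ref{lebcor}, your argument closes: $\widetilde{R}_{\mu}(1)=0$ $m_{d}$-a.e.\ on the open ball $B$ forces its distributional divergence to vanish there, contradicting the strict positivity of $(\operatorname{div}K)\ast\mu$ on $B$. That is a clean qualitative proof, and it is a genuinely shorter route to nowhere density than the one in the paper. The paper instead combines the same divergence identity with Lemma \ref{intpor} (another consequence of the Collapse Lemma) to prove Proposition \ref{Rieszporous}, a \emph{quantitative} porosity statement: whenever $\mu(B(x,r))>\eps r^{s}$ there is a hole of radius $\lambda(\eps)r$ in $B(x,3r)$. So the trade-off is that your version, once repaired, buys brevity, while the paper's version buys a uniform porosity estimate that is strictly stronger than Theorem \ref{nodense}. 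Either way, the Collapse Lemma cannot be bypassed.
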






\section{Notation}  Fix an integer $d\geq  2$.  Unless stated otherwise, $s$ is a real number with $s\in (0,d)$.

For $d'\geq 1$, consider a function  $\Omega:\mathbb{R}^d \rightarrow \mathbb{C}^{d'}$, satisfying the following properties

(i)   $\lambda\Omega(x) = \Omega(\lambda x)$ for $\lambda\in \mathbb{R}$.

(ii) $|\Omega(x)|\leq 1$ for any $x\in  \mathbb{S}^{d-1}$,

(iii) there exists $\alpha\in (0,1]$ such that
$$|\Omega(x)-\Omega(x')|\leq |x-x'|^{\alpha}, \text{ for any }x,x'\in \mathbb{S}^{d-1},
$$

(iv) $\Omega(x) = -\Omega(-x)$ for any $x\in  \mathbb{S}^{d-1}$.

An $s$-dimensional CZ-kernel takes the the form $$K(x) = \frac{\Omega(\tfrac{x}{|x|})}{|x|^s}=\frac{\Omega(x)}{|x|^{s+1}}, \text{ for }x\in \mathbb{R}^d \text{ with }x\neq 0,$$ where $\Omega$ satisfies properties (i)--(iv) above.


Throughout the paper, the symbol $\Lambda $ is reserved as a parameter which governs certain regularity properties of a measure, and $\alpha>0$ is reserved to govern the smoothness of the kernel of a CZO (property (iii) above).    In what follows $C$, $c$, or $C_j$, $c_j$ (for $j\in \mathbf{N}$) are respectively large and small positive constants that may depend on $s$, $\Lambda$, $\alpha$, and $d$.  We enumerate them so that the constant with index $j$ can be chosen in terms of constants with lower indices  (for example $C_{96}$ can depend on $c_{95}$ and $C_{4}$).  Within a specific argument, if a constant $C$ or $c$ does not have an index, then it may depend on all numbered constants chosen up to that moment, and can change from line to line.  At the very least, every large constant is greater than $1$, and every small constant is less than $1$.

By a measure, we shall always mean a non-negative locally finite Borel measure.  For a measure $\mu$, $\supp(\mu)$ denotes its closed support.  The $d$-dimensional Lebesgue measure is denoted by $m_d$.

A function $f$ (either scalar or vector valued) is called Lipschitz continuous if
$$\|f\|_{\Lip} = \sup_{x,y\in \mathbb{R}^d,\; x\neq y} \frac{|f(x)-f(y)|}{|x-y|}<\infty.
$$

For an open set $U\subset \mathbb{R}^d$, $\Lip_0(U)$ denotes the set of Lipschitz continuous functions that are compactly supported in $U$.

For two $\mathbb{C}^{d'}$-valued functions $f,g\in L^2(\mu)$, we define
$$\langle f,g\rangle_{\mu} = \int_{\mathbb{R}^d} \bigl(f \cdot g\bigl) d\mu.
$$
Since a CZO maps scalar functions into $\mathbb{C}^{d'}$ valued functions, there shall be many formulae involving a mixture of scalar and vector functions.  We shall leave it to the reader to discern when a product is taken between vector or scalar valued functions.

We denote by $\mathcal{D}$ the standard lattice of half open dyadic cubes in $\mathbb{R}^d$.

The $s$-dimensional Hausdorff measure of a set $E$ is defined by
$$\mathcal{H}^s(E) = \sup_{\delta>0}\inf\bigl\{\sum_j r_j^s : E\subset \bigcup_j B(x_j, r_j),\, r_j\leq \delta\bigl\}.
$$

\section{A primer on $s$-dimensional Calder\'{o}n-Zygmund operators} \label{primer}

The results of this paper are obtained by studying the properties of $T$ from two standpoints:  as properties of a locally integrable function with respect to $m_d$, and properties of an operator acting on $L^2(\mu)$.  

\begin{lem}\label{locl1} There is a constant $C_1>0$ such that for any measure $\nu$,
$$\int_{B(x,r)}\int_{B(y,R)}\frac{1}{|z-y|^s} d\nu(z)dm_d(y) \leq C_1 r^{d-s}\nu(B(x,r+R)),
$$
for any $x\in \mathbb{R}^d$, $r\in (0,\infty)$, and $R\in (0,\infty)$.
\end{lem}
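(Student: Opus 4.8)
The plan is to interchange the order of integration and reduce to a pointwise estimate in the outer variable. Since the integrand is nonnegative and measurable, Tonelli's theorem gives
\[
\int_{B(x,r)}\int_{B(y,R)}\frac{d\nu(z)\,dm_d(y)}{|z-y|^s}
=\int_{\mathbb{R}^d}\Bigl[\int_{B(x,r)\cap B(z,R)}\frac{dm_d(y)}{|z-y|^s}\Bigr]\,d\nu(z),
\]
so the whole matter comes down to bounding the bracketed $m_d$-integral for a fixed $z$.

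First I would observe that the bracketed quantity vanishes unless $z\in B(x,r+R)$: if $|z-x|\geq r+R$, then for every $y\in B(x,r)$ we have $|z-y|\geq|z-x|-|x-y|>R$, so $B(x,r)\cap B(z,R)=\emptyset$. This confines the outer integral to $z\in B(x,r+R)$, which is precisely what produces the factor $\nu(B(x,r+R))$ on the right-hand side.

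Second, for $z\in B(x,r+R)$ I would estimate $\int_{B(x,r)\cap B(z,R)}|z-y|^{-s}\,dm_d(y)$ by comparison with a ball centred at the singularity $z$. Writing $E=B(x,r)\cap B(z,R)$, we have $m_d(E)\leq m_d(B(x,r))=m_d(B(z,r))$, so there is $\rho\leq r$ with $m_d(B(z,\rho))=m_d(E)$. Because $y\mapsto|z-y|^{-s}$ is radially nonincreasing about $z$, among all sets of $m_d$-measure $m_d(E)$ the integral of this weight is maximised by the ball $B(z,\rho)$ (the elementary rearrangement/bathtub inequality, proved by splitting over $E\setminus B(z,\rho)$ and $B(z,\rho)\setminus E$ where the weight is $\leq\rho^{-s}$ and $\geq\rho^{-s}$ respectively, and using $m_d(E\setminus B(z,\rho))=m_d(B(z,\rho)\setminus E)$). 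Hence
\[
\int_{E}\frac{dm_d(y)}{|z-y|^s}\leq\int_{B(z,\rho)}\frac{dm_d(y)}{|z-y|^s}
=\frac{\mathcal{H}^{d-1}(\mathbb{S}^{d-1})}{d-s}\,\rho^{d-s}\leq\frac{\mathcal{H}^{d-1}(\mathbb{S}^{d-1})}{d-s}\,r^{d-s},
\]
the equality being a polar-coordinate computation that is finite precisely because $s<d$. (One could equally avoid rearrangement by a short case split, e.g.\ on whether $R\leq r$, and on whether $|z-x|\leq 2r$, but the estimate above is the cleanest.) Substituting this bound back into the displayed identity and integrating over $z\in B(x,r+R)$ yields the claim with $C_1=\mathcal{H}^{d-1}(\mathbb{S}^{d-1})/(d-s)$.

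I do not anticipate any genuine obstacle: the only delicate points are the legitimacy of Tonelli, which is immediate from nonnegativity and measurability, and the local integrability of the kernel singularity $|z-y|^{-s}$ against $m_d$, which holds exactly because $s<d$, a hypothesis in force throughout the paper.
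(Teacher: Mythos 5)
Your proof is correct. The paper offers no argument for this lemma at all (it is dismissed as ``trivial''), and what you wrote --- Tonelli, the observation that the inner $m_d$-integral vanishes unless $z\in B(x,r+R)$, and the bound $\int_{B(x,r)\cap B(z,R)}|z-y|^{-s}\,dm_d(y)\leq \tfrac{\mathcal{H}^{d-1}(\mathbb{S}^{d-1})}{d-s}\,r^{d-s}$ via rearrangement about the singularity, valid precisely because $s<d$ --- is exactly the standard computation the authors intend.
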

In particular,  this trivial lemma implies that if $\nu$ is a finite measure, then $T(\nu)(\,\cdot\,)=\int_{\mathbb{R}^d}K(\cdot - y)d\nu(y)\in L^1_{\loc}(m_d)$.

To define $T$ as an operator in $L^2$ requires a little more effort.  For $\delta>0$, define the regularized  CZ-kernel
$$K_{\delta}(x) = \frac{\Omega(x )}{\max(\delta, |x|)^{s+1}}.
$$
For a measure $\nu$ satisfying $\int_{\mathbb{R}^d} \frac{1}{(1+|x|)^{s}}d\nu(x)<\infty$, we write $T_{\delta}(\nu)(x) = \int_{\mathbb{R}^d} K_{\delta}(x-y) d\nu(y).$  To continue our discussion we shall need to introduce a natural growth condition on a measure: $\mu$ is called $\Lambda $-nice if $\mu(B(x,r))\leq \Lambda  r^s$ for any ball $B(x,r)\subset\mathbb{R}^d$.

If $\mu$ is a $\Lambda $-nice measure, then for any $f\in L^2(\mu)$, and $x\in \mathbb{R}^d$, $$\int_{\mathbb{R}^d}\frac{|f(y)|}{(\delta+|x-y|)^s}d\mu(y)\leq \frac{C_2||f||_{L^2(\mu)}}{\delta^{s/2}}.$$
To see this, first note that by the Cauchy-Schwarz inequality, it suffices to show that $\int_{\mathbb{R}^d} \tfrac{1}{(\delta+|x-y|)^{2s}}d\mu(y)\leq \tfrac{C}{\delta^s}.$
The integral in question is smaller than $\int_{B(x,\delta)}\frac{1}{\delta^{2s}}d\mu(y) + \int_{\mathbb{R}^d\backslash B(z,\delta)}\frac{1}{|x-y|^{2s}}d\mu(y).
$
The first term here is trivially bounded by $\tfrac{\mu(B(x,\delta))}{\delta^{2s}}\leq \tfrac{\Lambda }{\delta^s}$, while the second term is bounded by $\tfrac{C}{\delta^s}$ due to the following standard estimate:

\begin{lem}\label{tailest}  Suppose that $\mu$ is a $\Lambda $-nice measure.  Then, for every ball $B(x,r)\subset\mathbb{R}^d$, and $\eps>0$,
$$\int\limits_{\mathbb{R}^d\backslash B(x,r)} \frac{1}{|y-x|^{s+\eps}}d\mu(\xi)\leq \frac{\Lambda (s+\eps)}{\eps} r^{-\eps}.
$$
\end{lem}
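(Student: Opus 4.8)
The plan is to reduce the estimate to a one-variable integral through the layer-cake (Cavalieri) representation of the integral of a non-negative function. Put $g(y) = |y-x|^{-(s+\eps)}$ for $y \neq x$; this is a non-negative Borel function, so
$$\int_{\mathbb{R}^d\setminus B(x,r)} g\, d\mu = \int_0^\infty \mu\bigl(\{y\in\mathbb{R}^d\setminus B(x,r) : g(y)>t\}\bigr)\, dt,$$
and it suffices to bound the right-hand side.

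The key observation is that on $\mathbb{R}^d \setminus B(x,r)$ one has $g(y) \le r^{-(s+\eps)}$, so the integrand above vanishes once $t \ge r^{-(s+\eps)}$; and for $0 < t < r^{-(s+\eps)}$ the superlevel set $\{g>t\}$ is exactly the ball $B\bigl(x, t^{-1/(s+\eps)}\bigr)$, which by $\Lambda$-niceness has $\mu$-measure at most $\Lambda\, t^{-s/(s+\eps)}$. Plugging these two facts in,
$$\int_{\mathbb{R}^d\setminus B(x,r)} g\, d\mu \le \Lambda \int_0^{r^{-(s+\eps)}} t^{-s/(s+\eps)}\, dt.$$
Since $\tfrac{s}{s+\eps} < 1$, the last integral converges at the origin and equals $\tfrac{s+\eps}{\eps}\bigl(r^{-(s+\eps)}\bigr)^{\eps/(s+\eps)} = \tfrac{s+\eps}{\eps}\, r^{-\eps}$, which yields the claimed bound $\tfrac{\Lambda(s+\eps)}{\eps}\, r^{-\eps}$.

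There is no serious obstacle here: the only points requiring a moment's care are the validity of the layer-cake formula (which holds for an arbitrary non-negative Borel measurable function integrated against an arbitrary Borel measure, with both sides allowed to be infinite) and the convergence of the $t$-integral near $0$, which is precisely where the strict inequality $\eps>0$ enters and produces the factor $\tfrac{1}{\eps}$. As an alternative one could split $\mathbb{R}^d \setminus B(x,r)$ into the dyadic annuli $B(x,2^{k+1}r)\setminus B(x,2^k r)$, $k \ge 0$, bound $g$ by $(2^k r)^{-(s+\eps)}$ and the $\mu$-measure of the $k$-th annulus by $\Lambda(2^{k+1}r)^s$, and sum the resulting geometric series; this is equally elementary but gives the slightly worse constant $\tfrac{\Lambda 2^s}{1-2^{-\eps}}$ in place of $\tfrac{\Lambda(s+\eps)}{\eps}$.
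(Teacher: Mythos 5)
Your proof is correct: the paper omits the argument entirely (labelling the estimate ``standard''), and your layer-cake computation is the canonical one, reproducing the exact constant $\tfrac{\Lambda(s+\eps)}{\eps}$ in the statement. It is also consistent with how the paper handles the analogous tail integral in the proof of Lemma \ref{taildiff}, where the same Fubini/distribution-function device appears in the form $\int \frac{d\nu(y)}{|x-y|^{s+\alpha}} \leq C\int \frac{\nu(B(x,r))}{r^{s+\alpha}}\frac{dr}{r}$.
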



As a result of this discussion, we have that the operator $T_{\mu, \delta}(f) = \int_{\mathbb{R}^d}K_{\delta}(\cdot, y)f(y)d\mu(y)$ acts boundedly $L^2(\mu)\rightarrow L^2_{\loc}(\mu)$ as soon as $\mu$ is a nice measure.   Thus, for a nice measure $\mu$, it makes sense to ask if $T_{\mu, \delta}:L^2(\mu)\rightarrow L^2(\mu)$ independently of $\delta$.

\begin{defn}\label{gooddef}A measure $\mu$ is called $\Lambda $-good (for the CZO $T$) if it is $\Lambda $-nice, and $\|T_{\mu, \delta}\|_{L^2(\mu)\rightarrow L^2(\mu)}\leq \Lambda $ for every $\delta>0$.  We say that $T_{\mu}$ is bounded in $L^2(\mu)$ if $\mu$ is $\Lambda$-good, for some $\Lambda>0$.\end{defn}



If $\mu$ is a $\Lambda $-good measure, then we can define a bounded linear operator $T_{\mu}:L^2(\mu)\rightarrow L^2(\mu)$, with operator norm at most $\Lambda $, as a weak limit of the operators $T_{\mu, \delta}$, i.e. for each  $f,g\in L^2(\mu)$, $$\langle T_{\mu}(f),g\rangle_{\mu} = \lim_{\delta\rightarrow 0^+}\langle T_{\mu}(f),g\rangle _{\mu}. $$

This limiting procedure was first carried out by Mattila and Verdera \cite{MV}, and was done in detail in the papers \cite{NTV12} and \cite{JN1} for the Riesz and Cauchy transforms respectively. Thus, a discussion of the limiting procedure is deferred to an appendix.   The principal facts about the operator $T_{\mu}$ that we shall use (and that are proved in Appendix \ref{T1append}) are summarized as follows:
\begin{itemize}
\item  If, for some $f,g\in L^2(\mu)$, $\int_{\mathbb{R}^d}\int_{\mathbb{R}^d}|K(x-y)||f(x)||g(y)|d\mu(x)d\mu(y)<\infty$, then $$\langle T_{\mu}(f),g\rangle_{\mu} = \int_{\mathbb{R}^d}\int_{\mathbb{R}^d}K(x-y)f(x)g(y)d\mu(x)d\mu(y).$$
\item  If $\mu_k$ are $\Lambda $-good (resp. $\Lambda$-nice) measures which converge weakly to a measure $\mu$, then $\mu$ is $\Lambda $-good (resp. $\Lambda$-nice).
\item  If $\mu_k$ are $\Lambda $-good measures which converge to $\mu$ weakly, then $$\lim_{k \rightarrow \infty}\langle T_{\mu_k}(f),g\rangle_{\mu_k} =\langle T_{\mu}(f),g\rangle_{\mu} \text{ for any }f,g\in \Lip_0(\mathbb{R}^d).$$
\item For a $\Lambda$-good measure $\mu$, set $T_{\mu}^{\delta} = T_{\mu}-T_{\mu,\delta}$.   Then trivially, $T_{\mu}^{\delta}$ is bounded in $L^2(\mu)$ with operator norm at most $2\Lambda$.  Also, $\int_{B(x,r)}T_{\mu}^{\delta}(\chi_{B(x,r)})d\mu=0$ for any ball $B(x,r)\subset \mathbb{R}^d$.
\end{itemize}

\section{Reflectionless measures}

Before getting down to business, let us record a standard tail estimate that shall be used rather liberally in what follows.

\begin{lem}\label{taildiff}  Suppose that $\nu$ is a $\Lambda $-nice measure.  Fix $R, R'\in (0,\infty)$ with $R'\geq 2R$.  If $\dist(B(x,R), \supp(\nu)) \geq \tfrac{R'}{2}$, and $z,z'\in B(x,R)$, then
$$\int_{\mathbb{R}^d}|K(z-y)-K(z'-y)|d\nu(y) \leq C_3 \Bigl(\frac{R}{R'}\Bigl)^{\alpha}.
$$
\end{lem}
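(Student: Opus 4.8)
The plan is to reduce the lemma to the standard smoothness bound for the kernel $K$, applied to $w=z-y$, $w'=z'-y$, followed by a single invocation of the tail estimate of Lemma \ref{tailest}. Throughout, $y$ ranges over $\supp(\nu)$, and the separation hypothesis is used to guarantee that $z-y$ and $z'-y$ are large and comparable in length.

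\smallskip
\noindent\textbf{Step 1: a pointwise kernel--difference estimate.} First I would record that there is a constant $C$, depending only on $s,\alpha,d$, such that
$$|K(w)-K(w')|\leq C\,\frac{|w-w'|^{\alpha}}{\min(|w|,|w'|)^{s+\alpha}}\quad\text{whenever } w,w'\neq 0 \text{ and } |w-w'|\leq 2\min(|w|,|w'|).$$
Under that relation one has $\max(|w|,|w'|)\leq 3\min(|w|,|w'|)$, so $|w|$ and $|w'|$ are comparable. To prove the estimate, write $K(w)=\Omega(w)/|w|^{s+1}$, insert and remove $\Omega(w')/|w|^{s+1}$, and bound the two pieces. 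For the first, property (i) gives $\Omega(w)-\Omega(w')=|w|\bigl(\Omega(\tfrac{w}{|w|})-\Omega(\tfrac{w'}{|w'|})\bigr)+(|w|-|w'|)\Omega(\tfrac{w'}{|w'|})$, and then properties (ii), (iii), together with the elementary inequalities $\bigl|\tfrac{w}{|w|}-\tfrac{w'}{|w'|}\bigr|\leq \tfrac{2|w-w'|}{\max(|w|,|w'|)}$ and $\bigl||w|-|w'|\bigr|\leq|w-w'|$, yield the bound $C|w-w'|^{\alpha}|w|^{-s-\alpha}$ (any surviving factor of $|w-w'|$ beyond the $\alpha$-th power is traded for a constant using $|w-w'|\leq 2|w|$). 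For the second piece, property (ii) gives $|\Omega(w')|\leq|w'|$, and the mean value theorem applied to $t\mapsto t^{-s-1}$ gives $\bigl||w|^{-s-1}-|w'|^{-s-1}\bigr|\leq (s+1)|w-w'|\min(|w|,|w'|)^{-s-2}$; combining these with comparability of $|w|,|w'|$ again produces $C|w-w'|^{\alpha}\min(|w|,|w'|)^{-s-\alpha}$.

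\smallskip
\noindent\textbf{Step 2: application and integration.} For $y\in\supp(\nu)$ the hypothesis $\dist(B(x,R),\supp(\nu))\geq\tfrac{R'}{2}$ forces $|z-y|\geq\tfrac{R'}{2}$ and $|z'-y|\geq\tfrac{R'}{2}$, while $|w-w'|=|z-z'|\leq\diam B(x,R)=2R\leq R'=2\cdot\tfrac{R'}{2}\leq 2\min(|z-y|,|z'-y|)$, so Step 1 applies; moreover $|z-y|\leq|z'-y|+2R\leq 3\min(|z-y|,|z'-y|)$, so we may replace the $\min$ by $|z-y|$ at the cost of a constant. Hence $|K(z-y)-K(z'-y)|\leq C(2R)^{\alpha}|z-y|^{-s-\alpha}$ for all $y\in\supp(\nu)$. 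Since $z\in B(x,R)$, the separation hypothesis also gives $\supp(\nu)\cap B(z,\tfrac{R'}{2})=\varnothing$, so integrating against $\nu$ and applying Lemma \ref{tailest} with $\eps=\alpha$ and the ball $B(z,\tfrac{R'}{2})$ gives
$$\int_{\mathbb{R}^d}|K(z-y)-K(z'-y)|\,d\nu(y)\leq C(2R)^{\alpha}\!\!\int\limits_{\mathbb{R}^d\setminus B(z,R'/2)}\!\!\frac{d\nu(y)}{|z-y|^{s+\alpha}}\leq C(2R)^{\alpha}\,\frac{\Lambda(s+\alpha)}{\alpha}\Bigl(\frac{R'}{2}\Bigr)^{-\alpha},$$
which is at most $C_3(R/R')^{\alpha}$ for a suitable choice of $C_3$ depending on $s,\alpha,\Lambda,d$.

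\smallskip
\noindent\textbf{Expected obstacle.} I do not expect a genuine difficulty: the statement is routine once the kernel--difference estimate of Step 1 is in place, and that estimate is the standard consequence of properties (i)--(iv). The one point that needs a little care is that the separation hypothesis only supplies $R'\geq 2R$ rather than $R'\gg R$, so the pointwise estimate must be proved under the weak relation $|w-w'|\leq 2\min(|w|,|w'|)$ (instead of the more usual $|w-w'|\leq\tfrac12|w|$), with the resulting numerical factors absorbed into $C$ and ultimately into $C_3$.
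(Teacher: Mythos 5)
Your proof is correct and follows essentially the same route as the paper: the Hölder smoothness of $\Omega$ gives the pointwise bound $|K(z-y)-K(z'-y)|\leq C|z-z'|^{\alpha}/|z-y|^{s+\alpha}$ on the support of $\nu$, and then the standard tail estimate (Lemma \ref{tailest}, which the paper carries out directly as a layer-cake integral centered at $x$ rather than at $z$) finishes the job. The extra care you take in Step 1 to prove the kernel-difference estimate under the weak comparability $|w-w'|\leq 2\min(|w|,|w'|)$ is exactly the right precaution, and the constants all land where they should.
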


\begin{proof}
The H\"{o}lder continuity of $\Omega$ guarantees that if  $z,z'\in B(x,R)$, then
$|K(z-y)-K(z'-y)|\leq \frac{C|z-z'|^{\alpha}}{|x-y|^{s+\alpha}}
$
for any $y\in \mathbb{R}^d\backslash B(x,2R)$ (which contains $\supp(\nu))$.  But
$$\int_{\mathbb{R}^d}\frac{1}{|x-y|^{s+\alpha}}d\nu(y) \leq C\int_{\tfrac{R'}{2}}^{\infty}\frac{\nu(B(x,r))}{r^{s+\alpha}}\frac{dr}{r}\leq C\int_{\tfrac{R}{2}'}^{\infty}\frac{dr}{r^{1+\alpha}}\leq \frac{C}{R'^{\alpha}},
$$
and the lemma follows.\end{proof}

\subsection{The distribution $T_{\mu}(1)$.}\label{T1dist}  We now define $T_{\mu}(1)$ as a distribution, for a $\Lambda $-good measure $\mu$.

Suppose that $\psi\in \Lip_0(\mathbb{R}^d)$ with $\int_{\mathbb{R}^d} \psi d\mu =0$. Fix a ball $B$ containing $\supp(\psi)$ and fix $z\in B$.  Let $\varphi\in L^2(\mu)$ satisfy $\varphi\equiv 1$ on $2B$ and $0\leq \varphi\leq 1$ on $\mathbb{R}^d$.  Define
\begin{equation}\begin{split}\label{R1dist}\langle T_{\mu}(1), \psi\rangle_{\mu} &=\langle T_{\mu}(\varphi), \psi\rangle_{\mu}\\
& +\Big\langle\int_{\mathbb{R}^d}(1-\varphi(y))\bigl[K(\cdot-y)-K(z-y) \bigl]d\mu(y) ,\psi \Bigl\rangle_{\mu}.
\end{split}\end{equation}
Lemma \ref{taildiff} -- applied with $\nu=[1-\varphi]\mu$, $R$ equal to the radius of $B$, and $R'=2R$ -- yields that $\int_{\mathbb{R}^d}(1-\varphi(y))\bigl|K(x-y)-K(z-y) \bigl|d\mu(y)\leq C$ for any $x\in \supp(\psi)$, and so both terms defining (\ref{R1dist}) are finite.

We now claim that $\langle T_{\mu}(1), \psi\rangle_{\mu}$ is independent of the particular choice of $B$, $\varphi$ and $z$.   
To see this, set $B_1$ and $B_2$ to be two balls with $B_1\subset B_2$ and $B_1\supset \supp(\psi)$.  Fix $z_1 \in B_1$ and $z_2\in B_2$, and consider admissible functions $\varphi_1$ and $\varphi_2$ (that is, $\varphi_j\in L^2(\mu)$, $\varphi_j\equiv 1$ on $2B_j$, and $\varphi_j\in [0,1]$ on $\mathbb{R}^d$ for each $j=1,2$).  Then
$$\langle T_{\mu}(\varphi_1), \psi\rangle_{\mu}- \langle T_{\mu}(\varphi_2), \psi\rangle_{\mu} = \langle T_{\mu}(\varphi_1-\varphi_2), \psi\rangle_{\mu}.
$$
But $\int_{\mathbb{R}^d}\int_{\mathbb{R}^d}|K(x-y)||\varphi_1(y)-\varphi_2(y)||\psi(x)|d\mu(y)d\mu(x)<\infty$, so $$\langle T_{\mu}(\varphi_1-\varphi_2), \psi\rangle_{\mu}= \int_{\mathbb{R}^d}\int_{\mathbb{R}^d}K(x-y)(\varphi_1(y)-\varphi_2(y))\psi(x)d\mu(y)d\mu(x).$$
Now, the mean zero property of $\psi$ allows us to re-write this as
$$\int_{\mathbb{R}^d}\int_{\mathbb{R}^d}[K(x-y)-K(z_1-y)](\varphi_1(y)-\varphi_2(y))\psi(x)d\mu(y)d\mu(x).$$
Courtesy of Fubini's theorem (which is applicable due to Lemma \ref{taildiff}) this integral is in tern equal to
\begin{equation}\begin{split}\nonumber\Big\langle\int_{\mathbb{R}^d}(1-\varphi_2(y))&\bigl[K(\cdot-y)-K(z_1-y) \bigl]d\mu(y) ,\psi \Bigl\rangle_{\mu}\\&- \Big\langle\int_{\mathbb{R}^d}(1-\varphi_1(y))\bigl[K(\cdot-y)-K(z_1-y) \bigl]d\mu(y) ,\psi \Bigl\rangle_{\mu}.
\end{split}\end{equation}
But $\int_{\mathbb{R}^d}(1-\varphi_2(y))\bigl[K(z_2-y)-K(z_1-y) \bigl]d\mu(y)$ is a finite constant (vector), so we  may replace $z_1$ by $z_2$ in the first term of the previous display. The claimed independence is proved, and we conclude that $T_{\mu}(1)$ is a well-defined distribution acting on compactly supported Lipschitz continuous functions with $\mu$-mean zero.

\begin{defn}\label{refldef}  A $\Lambda $-good measure $\mu$ is said to be \textit{reflectionless} (for a CZO $T$) if $$\langle T_{\mu}(1), \psi \rangle_{\mu}=0, $$
for all $\psi\in \Lip_0(\mathbb{R}^d)$ with $\int_{\mathbb{R}^d} \psi d\mu=0$.
\end{defn}


\subsection{A weak convergence result for the distribution $T_{\mu}(1)$}

We next prove a weak convergence result for the distribution $T_{\mu}(1)$ in a sufficient amount of generality to be used at several points in the sequel.

Consider two sets of functions $$\Phi^{\mu}_R =\Bigl\{\psi\in \Lip_0(B(0, R)): \|\psi\|_{\Lip} < 1\;\text{ and } \int_{B(0,R)}\psi d\mu=0\Bigl\},$$
and $$\Phi^{\mu} =\Bigl\{\psi\in \Lip_0(\mathbb{R}^d): \|\psi\|_{\Lip} < 1\;\text{ and } \int_{\mathbb{R}^d}\psi d\mu=0\Bigl\} = \bigcup_{R>0}\Phi_R^{\mu}.$$

\begin{lem} \label{1closetophi} Let $R>0$, and $R'\geq2R$.  Suppose that $\mu$ is a $\Lambda $-good measure.  If $\varphi\in L^2(\mu)$ satisfies $\varphi\equiv 1$ on $B(0, R')$, and $0\leq \varphi\leq 1$ in $\mathbb{R}^d$,  then
$$|\langle T_{\mu}(1), \psi\rangle_{\mu}-\langle T_{\mu}(\varphi), \psi\rangle_{\mu}|\leq C_4R^{s+1}\Bigl(\frac{R}{R'}\Bigl)^{\alpha},
$$
for all $\psi\in \Phi^{\mu}_R$.
\end{lem}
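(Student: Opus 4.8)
\emph{Proof strategy.} The plan is to feed the given cutoff $\varphi$ directly into the definition (\ref{R1dist}) of the distribution $T_{\mu}(1)$. Fix $\psi\in\Phi^{\mu}_R$; then $\supp(\psi)\subset B(0,R)$ and $\int_{\mathbb{R}^d}\psi\,d\mu=0$, so the pairing $\langle T_{\mu}(1),\psi\rangle_{\mu}$ is legitimately defined. First I would verify that $\varphi$ is itself an admissible cutoff in (\ref{R1dist}) for the choice $B=B(0,R)$: since $R'\geq 2R$ we have $2B=B(0,2R)\subset B(0,R')$, and on $B(0,R')$ the function $\varphi$ equals $1$, while $0\leq\varphi\leq1$ and $\varphi\in L^2(\mu)$ by hypothesis. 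Choosing also the base point $z=0\in B$, the formula (\ref{R1dist}) reads
\[
\langle T_{\mu}(1),\psi\rangle_{\mu}-\langle T_{\mu}(\varphi),\psi\rangle_{\mu}
=\Bigl\langle\,\int_{\mathbb{R}^d}(1-\varphi(y))\bigl[K(\,\cdot\,-y)-K(-y)\bigr]\,d\mu(y),\ \psi\,\Bigr\rangle_{\mu},
\]
so everything reduces to estimating this remaining ``tail'' pairing.

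Next I would bound the inner integral pointwise on $\supp(\psi)$. Set $\nu=(1-\varphi)\mu$. Since $0\leq1-\varphi\leq1$, the measure $\nu$ is $\Lambda$-nice, and since $\varphi\equiv1$ on $B(0,R')$ we have $\supp(\nu)\subset\mathbb{R}^d\setminus B(0,R')$; consequently $\dist(B(0,R),\supp(\nu))\geq R'-R\geq R'/2$, using $R\leq R'/2$. Applying Lemma \ref{taildiff} to $\nu$ with the ball $B(0,R)$ and with $R'$ itself playing the role of the larger radius there --- admissible because $R'\geq 2R$ --- we obtain, for every $x\in B(0,R)$,
\[
\Bigl|\int_{\mathbb{R}^d}(1-\varphi(y))\bigl[K(x-y)-K(-y)\bigr]\,d\mu(y)\Bigr|
\leq\int_{\mathbb{R}^d}\bigl|K(x-y)-K(-y)\bigr|\,d\nu(y)\leq C_3\Bigl(\frac{R}{R'}\Bigr)^{\alpha}.
\]

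Finally I would conclude by a crude estimate of the pairing against $\psi$. The previous display bounds the supremum over $\supp(\psi)$ of the function paired with $\psi$, so the tail pairing is at most $C_3(R/R')^{\alpha}\,\|\psi\|_{L^1(\mu)}$. Since $\psi$ is $1$-Lipschitz and vanishes off $B(0,R)$, one has $\|\psi\|_{L^{\infty}}\leq 2R$, hence $\|\psi\|_{L^1(\mu)}\leq 2R\,\mu(B(0,R))\leq 2\Lambda R^{s+1}$ by $\Lambda$-niceness. Combining the three steps gives the asserted inequality with $C_4=2C_3\Lambda$. There is no genuinely hard step: the only points needing care are checking that the given $\varphi$ qualifies as the cutoff in (\ref{R1dist}) --- which is precisely where the factor $2$ in the hypothesis $R'\geq 2R$ is used --- and verifying the niceness and the support of $\nu$ so that Lemma \ref{taildiff} applies; the finiteness and Fubini issues built into (\ref{R1dist}) are already subsumed by that lemma.
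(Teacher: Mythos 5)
Your proposal is correct and follows essentially the same route as the paper: take the given $\varphi$ as the admissible cutoff in the definition of $T_{\mu}(1)$ with $B=B(0,R)$, bound the resulting tail term pointwise via Lemma \ref{taildiff} applied to $(1-\varphi)\mu$, and estimate $\|\psi\|_{L^1(\mu)}\leq 2R\,\mu(B(0,R))\leq CR^{s+1}$ using the Lipschitz bound and $\Lambda$-niceness. No gaps.
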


\begin{proof}  With $B=B(0,R)$ and $z\in B$, the function $\varphi$ is admissible for the definition of the distribution $T_{\mu}(1)$.  Therefore, $\langle T_{\mu}(1), \psi\rangle_{\mu}-\langle T_{\mu}(\varphi), \psi\rangle_{\mu}$ is equal to
$$\int_{B(0,R)}\psi(x)\int_{\mathbb{R}^d}(1-\varphi(y))[K(x-y)-K(z-y)]d\mu(y)d\mu(x).
$$
Thus, Lemma \ref{taildiff} yields that this quantity is at most $C\|\psi\|_{L^1(\mu)}\bigl(\tfrac{R}{R'}\bigl)^{\alpha}$ in absolute value.  But, with $\omega\in \mathbb{R}^d$ satisfying $|\omega|=R$, we have
$$\int_{B(0,R)}\!\!\!|\psi|d\mu = \int_{B(0,R)}\bigl|\psi(z)-\psi(\omega)\bigl|d\mu(z)\leq 2R\mu(B(0,R))\leq CR^{s+1},
$$
as required.
\end{proof}

\begin{lem}\label{smalldiff}  Suppose that $\mu_k$ are $\Lambda $-good measures that converge weakly to a measure $\mu$ (and so $\mu$ is $\Lambda $-good as well).  Let $\gamma_k$ and $\widetilde{R}_k$ be sequences of non-negative numbers satisfying $\gamma_k\rightarrow 0 $, and $\widetilde{R}_k\rightarrow \widetilde{R}\in (0,\infty]$, as $k\rightarrow\infty$.

If $|\langle T_{\mu_k}(1), \psi\rangle_{\mu_k}|\leq \gamma_k$ for all $\psi\in \Phi_{\widetilde{R}_k}^{\mu_k}$.  Then $\langle T_{\mu}(1), \psi\rangle_{\mu}=0$ for all $\psi\in \Phi_{\widetilde{R}}^{\mu}$. (Here $\Phi_{\widetilde{R}}^{\mu}=\Phi^{\mu}$ if $\widetilde{R} = \infty$.)
\end{lem}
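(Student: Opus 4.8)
The plan is to reduce the statement, for a fixed arbitrary $\psi\in\Phi_{\widetilde{R}}^{\mu}$, to the single convergence $\langle T_{\mu_k}(1),\psi_k\rangle_{\mu_k}\to\langle T_{\mu}(1),\psi\rangle_{\mu}$ along a sequence of \emph{admissible} perturbations $\psi_k\in\Phi_{\widetilde{R}_k}^{\mu_k}$ of $\psi$; since the hypothesis forces $|\langle T_{\mu_k}(1),\psi_k\rangle_{\mu_k}|\le\gamma_k\to0$, this immediately yields $\langle T_{\mu}(1),\psi\rangle_{\mu}=0$. First I would record that $\mu$ is $\Lambda$-good, fix $R\in(0,\infty)$ with $\supp\psi\subset B(0,R)$ and $R<\widetilde{R}$ (so that $\widetilde{R}_k>R$ eventually and $\psi\in\Phi_R^{\mu}$), and dispose of the degenerate case $\supp\psi\cap\supp\mu=\emptyset$: there every term in the defining formula (\ref{R1dist}) for $\langle T_{\mu}(1),\psi\rangle_{\mu}$ is an integral of something against $\psi\,d\mu$, which vanishes since $\psi\equiv0$ on $\supp\mu$. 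So henceforth I assume $\supp\psi\cap\supp\mu\neq\emptyset$.

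The key point — and the only genuine subtlety — is that the distribution $T_{\mu_k}(1)$ only pairs with functions of \emph{$\mu_k$-mean zero}, whereas $\psi$, though $\mu$-mean zero, need not be $\mu_k$-mean zero. The fix is a vanishing rank-one correction. Pick $p\in\supp\psi\cap\supp\mu$, a ball $\overline{B(p,\rho)}\subset B(0,R)$, and a fixed $h\in\Lip_0(B(p,\rho))$ with $0\le h\le1$ and $h\equiv1$ on $B(p,\rho/2)$, so that $\int h\,d\mu\ge\mu(B(p,\rho/2))>0$. Weak convergence gives $\int h\,d\mu_k\to\int h\,d\mu>0$ and $\int\psi\,d\mu_k\to\int\psi\,d\mu=0$; setting $c_k=\bigl(\int\psi\,d\mu_k\bigr)/\bigl(\int h\,d\mu_k\bigr)$ for large $k$, one has $c_k\to0$, and $\psi_k:=\psi-c_kh$ satisfies $\int\psi_k\,d\mu_k=0$, $\supp\psi_k\subset B(0,R)$, and $\|\psi_k\|_{\Lip}\le\|\psi\|_{\Lip}+|c_k|\|h\|_{\Lip}<1$ for large $k$ (using that $\|\psi\|_{\Lip}<1$ strictly). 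Hence $\psi_k\in\Phi_R^{\mu_k}\subset\Phi_{\widetilde{R}_k}^{\mu_k}$ for large $k$, and the hypothesis applies.

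For the limit passage I would use Lemma~\ref{1closetophi} to replace $T(1)$ by $T(\varphi_{R'})$ for a fixed cutoff $\varphi_{R'}\in\Lip_0(\mathbb{R}^d)$ with $\varphi_{R'}\equiv1$ on $B(0,R')$ and $0\le\varphi_{R'}\le1$: applied to $\mu_k$ (on $\psi_k$) and to $\mu$ (on $\psi$), it gives $|\langle T_{\mu_k}(1),\psi_k\rangle_{\mu_k}-\langle T_{\mu_k}(\varphi_{R'}),\psi_k\rangle_{\mu_k}|$ and $|\langle T_{\mu}(1),\psi\rangle_{\mu}-\langle T_{\mu}(\varphi_{R'}),\psi\rangle_{\mu}|$ both $\le C_4 R^{s+1}(R/R')^{\alpha}$. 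Then, writing $\langle T_{\mu_k}(\varphi_{R'}),\psi_k\rangle_{\mu_k}=\langle T_{\mu_k}(\varphi_{R'}),\psi\rangle_{\mu_k}-c_k\langle T_{\mu_k}(\varphi_{R'}),h\rangle_{\mu_k}$, the first term converges to $\langle T_{\mu}(\varphi_{R'}),\psi\rangle_{\mu}$ by the weak-continuity fact for $\Lip_0\times\Lip_0$ pairings recorded in Section~\ref{primer}, while the second tends to $0$ because $|\langle T_{\mu_k}(\varphi_{R'}),h\rangle_{\mu_k}|\le\Lambda\|\varphi_{R'}\|_{L^2(\mu_k)}\|h\|_{L^2(\mu_k)}$ is bounded uniformly in $k$ (the $\mu_k$ are uniformly $\Lambda$-nice and the supports are fixed) while $c_k\to0$. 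Combining the three estimates gives $\limsup_k|\langle T_{\mu_k}(1),\psi_k\rangle_{\mu_k}-\langle T_{\mu}(1),\psi\rangle_{\mu}|\le 2C_4 R^{s+1}(R/R')^{\alpha}$, and since the left-hand side does not depend on $R'$, letting $R'\to\infty$ yields the desired convergence, hence $\langle T_{\mu}(1),\psi\rangle_{\mu}=0$. The main obstacle is precisely the one flagged above: engineering admissibility of $\psi_k$ (\emph{$\mu_k$-mean zero and} Lipschitz norm still $<1$) through a correction that disappears in the limit, which also forces the harmless preliminary split on whether $\supp\psi$ meets $\supp\mu$; everything else is bookkeeping layered on Lemma~\ref{1closetophi} and the quoted weak-continuity statement.
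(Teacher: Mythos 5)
Your proof is correct and follows essentially the same route as the paper's: a rank-one correction $\psi_k=\psi-c_k h$ to restore $\mu_k$-mean zero (the paper uses $\psi_k=\psi-\lambda_k\rho$ with a normalized bump $\rho$), followed by Lemma \ref{1closetophi} with a fixed cutoff and the weak continuity of $\langle T_{\mu_k}(\varphi),\psi\rangle_{\mu_k}$. The only cosmetic difference is your phrasing of the degenerate case via $\supp(\psi)\cap\supp(\mu)=\varnothing$ versus the paper's $\mu(B(0,\widetilde{R}))=0$; both dispose of the same triviality.
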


\begin{proof} If $\mu(B(0, \widetilde{R}))=0$, then there is nothing to prove, so assume that $\mu(B(0, \widetilde{R}))>0$.  Fix $\psi\in \Phi_{\widetilde{R}}^{\mu}$.  Then there exists $R\in (0, \widetilde{R})$ with $\supp(\psi)\subset B(0, R)$, and $B(0,R)\subset B(0, \widetilde{R}_k)$ for sufficiently large $k$.  Suppose now that $\rho\in \Lip_0(B(0,R))$, $\rho\geq 0$, and $\|\rho\|_{L^1(\mu)}=1$.  If $k$ is large enough, then $\|\rho\|_{L^1(\mu_k)}\geq\tfrac{1}{2}$.  For these $k$, define $\psi_k = \psi - \lambda_k \rho$, where $\lambda_k = \bigl(\int_{\mathbb{R}^d}\rho\, d\mu_k\bigl)^{-1}\int_{\mathbb{R}^d} \psi \,d\mu_k$.  Note that $\lambda_k\rightarrow 0$ as $k\rightarrow \infty$.  Thus $\psi_k\in \Phi_{R}^{\mu_k}\subset \Phi_{\widetilde{R}_k}^{\mu_k}$ for sufficiently large $k$, and so $|\langle T_{\mu_k}(1), \psi_k\rangle_{\mu_k}|\leq \gamma_k$.

Let $\eps>0$.  Then by Lemma \ref{1closetophi}, we may pick $\varphi\in \Lip_0(\mathbb{R}^d)$ such that
$|\langle T_{\mu_k}(1), \psi_k\rangle_{\mu_k}-\langle T_{\mu_k}(\varphi), \psi_k\rangle_{\mu_k}|\leq \eps$ for all $k$ sufficiently large to guarantee that $\psi_k\in \Phi_{R}^{\mu_k}$, and also $|\langle T_{\mu}(1), \psi\rangle_{\mu}-\langle T_{\mu}(\varphi), \psi\rangle_{\mu}|\leq\eps$.

On the other hand, $\lim_{k\rightarrow\infty}\langle T_{\mu_k}(\varphi), \psi\rangle_{\mu_k} = \langle T_{\mu}(\varphi), \psi\rangle_{\mu}$.  
Consequently, $\lim_{k\rightarrow\infty}|\langle T_{\mu_k}(\varphi), \lambda_k\rho\rangle_{\mu_k}|=\bigl[\lim_{k\rightarrow\infty}\lambda_k\bigl]\cdot |\langle T_{\mu}(\varphi), \psi\rangle_{\mu}|=0$.

Bringing everything together, we see that $|\langle T_{\mu}(1), \psi\rangle_{\mu}|\leq 3\eps$, from which the lemma follows.\end{proof}

An immediate corollary of this lemma is the following useful fact.

\begin{cor}\label{reflweaklim}  Suppose that $\mu_k$ are $\Lambda $-good reflectionless measures that converge weakly to a measure $\mu$.  Then $\mu$ is a $\Lambda $-good reflectionless measure.
\end{cor}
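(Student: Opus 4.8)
The plan is to invoke Lemma \ref{smalldiff} with a trivial choice of the auxiliary data. Since each $\mu_k$ is $\Lambda$-good, the second bullet point recalled in Section \ref{primer} (weak limits of $\Lambda$-good measures are $\Lambda$-good) already tells us $\mu$ is $\Lambda$-good, so only the reflectionless property needs to be checked. First I would observe that because each $\mu_k$ is reflectionless, the hypothesis ``$\langle T_{\mu_k}(1),\psi\rangle_{\mu_k}=0$ for all $\psi\in\Phi^{\mu_k}$'' holds by Definition \ref{refldef}; in particular $|\langle T_{\mu_k}(1),\psi\rangle_{\mu_k}|\leq\gamma_k$ for \emph{any} sequence $\gamma_k\geq 0$, so we may simply take $\gamma_k=0$ (or $\gamma_k\to 0$ in any fashion we like). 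For the radii, set $\widetilde{R}_k=\infty$ for every $k$, so that $\Phi^{\mu_k}_{\widetilde{R}_k}=\Phi^{\mu_k}$ and the hypothesis of Lemma \ref{smalldiff} is met vacuously from reflectionlessness. Then $\widetilde{R}=\lim_k\widetilde{R}_k=\infty$ as well.

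Applying Lemma \ref{smalldiff} with this data yields $\langle T_{\mu}(1),\psi\rangle_{\mu}=0$ for all $\psi\in\Phi^{\mu}_{\widetilde{R}}=\Phi^{\mu}$, i.e.\ for all $\psi\in\Lip_0(\mathbb{R}^d)$ with $\|\psi\|_{\Lip}<1$ and $\int\psi\,d\mu=0$. To finish, I would remove the normalization $\|\psi\|_{\Lip}<1$: given an arbitrary nonzero $\psi\in\Lip_0(\mathbb{R}^d)$ with $\int_{\mathbb{R}^d}\psi\,d\mu=0$, the rescaled function $\tilde\psi=\tfrac{1}{2\|\psi\|_{\Lip}}\psi$ lies in $\Phi^{\mu}$, and since $\langle T_{\mu}(1),\cdot\rangle_{\mu}$ is linear on the space of compactly supported Lipschitz functions with $\mu$-mean zero (as established in Section \ref{T1dist}), $\langle T_{\mu}(1),\psi\rangle_{\mu}=2\|\psi\|_{\Lip}\langle T_{\mu}(1),\tilde\psi\rangle_{\mu}=0$. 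Hence $\mu$ is reflectionless.

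There is essentially no obstacle here — the corollary is a direct specialization of Lemma \ref{smalldiff}. The only minor point to be careful about is the ``$\widetilde{R}_k\to\widetilde{R}\in(0,\infty]$'' clause: taking all $\widetilde{R}_k=\infty$ is permitted since the statement explicitly allows $\widetilde{R}=\infty$ and interprets $\Phi^{\mu}_{\infty}$ as $\Phi^{\mu}$, and the degenerate case $\mu(B(0,\widetilde R))=\mu(\mathbb{R}^d)=0$ (the zero measure) is trivially reflectionless anyway, consistent with the ``nothing to prove'' branch in the proof of Lemma \ref{smalldiff}.
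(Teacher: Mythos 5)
Your proposal is correct and follows essentially the same route as the paper: the paper also deduces $\Lambda$-goodness from the weak-limit fact and then applies Lemma \ref{smalldiff} with $\gamma_k=0$, the only cosmetic difference being that it takes $\widetilde{R}_k=k$ (a sequence of finite numbers, as the lemma literally requires) rather than $\widetilde{R}_k=\infty$. Your extra remark on removing the normalization $\|\psi\|_{\Lip}<1$ by linearity is a point the paper leaves implicit, and it is handled correctly.
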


\begin{proof} We have already seen that $\mu$ is $\Lambda $-good.  Choose $\gamma_k=0$, and $\widetilde{R}_k=k$.  Then the assumptions of the lemma are satisfied, so $\langle T_{\mu}(1), \psi\rangle_{\mu}=0$ for all $\psi \in \Phi^{\mu}$.
\end{proof}


\section{Wavelet coefficients and reflectionless measures}

  Suppose that $\mu$ is a $\Lambda $-nice measure.  A sequence of functions $\psi_Q$ ($Q\in\mathcal{D}$) is called a $C$-Riesz system if there is a constant $C>0$ such that
$$\Bigl\|\sum_{Q\in \mathcal{D}} a_Q\psi_Q\Bigl\|^2_{L^2(\mu)} \leq C\sum_{Q\in \mathcal{D}}|a_Q|^2 \text{ for any }(a_Q)_{Q\in \mathcal{D}}\in \ell^2(\mathcal{D}).
$$
By a trivial duality argument, $\psi_Q$ ($Q\in \mathcal{D}$) is a $C$-Riesz system if and only if
$$\sum_{Q\in \mathcal{D}}\bigl|\langle f, \psi_Q \rangle_{\mu}\bigl|^2\leq C\|f\|_{L^2(\mu)}^2 \text{ for any }f\in L^2(\mu).
$$

Now, with each $Q\in \mathcal{D}$ associate a set of functions $\Psi_Q$.  Then we say that $\Psi_Q$ ($Q\in \mathcal{D}$) is a $C$-Riesz family if, for any choice of $\psi_Q\in \Psi_Q$, the system $\psi_Q$ ($Q\in \mathcal{D}$) is a $C$-Riesz system.

We shall be interested in a particular family.  Let $A>10\sqrt{d}$.  For each $Q\in \mathcal{D}$ define
\begin{equation}\begin{split}\nonumber\Psi^{\mu}_{Q,A} = \Bigl\{\psi\in \Lip_0\bigl(B\bigl(x_Q,& A\ell(Q)\bigl)\bigl):\int_{\mathbb{R}^d}\psi d\mu=0, \,
\|\psi\|_{\Lip}< \frac{1}{\ell(Q)^{1+\tfrac{s}{2}}}\Bigl\}.\end{split}\end{equation}


\begin{lem}\label{systemlem} $\Psi_Q^{\mu}$ $(Q\in \mathcal{D})$ is a $C_5A^{d+2+\tfrac{3s}{2}}$-Riesz family.
\end{lem}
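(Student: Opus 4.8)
The plan is to establish the dual (testing) formulation of the Riesz family property: it suffices to show that for any choice of functions $\psi_Q \in \Psi_Q^{\mu}$ and any $f \in L^2(\mu)$, one has $\sum_{Q \in \mathcal{D}} |\langle f, \psi_Q \rangle_{\mu}|^2 \leq C_5 A^{d+2+3s/2} \|f\|_{L^2(\mu)}^2$. This is a Carleson-type / almost-orthogonality estimate, and the natural tool is Schur's test: I will bound $\sum_{Q} |\langle f, \psi_Q\rangle_\mu|^2$ by controlling the "matrix" $\langle \psi_Q, \psi_{Q'}\rangle_\mu$ and showing its rows and columns are summable with the right constant. The key structural features I would exploit are: each $\psi_Q$ has $\mu$-mean zero, is supported in $B(x_Q, A\ell(Q))$, and satisfies $\|\psi_Q\|_{\Lip} < \ell(Q)^{-1-s/2}$, which together with $\Lambda$-niceness give the pointwise bounds $\|\psi_Q\|_{L^\infty(\mu)} \leq C A \ell(Q)^{-s/2}$ and $\|\psi_Q\|_{L^1(\mu)} \leq C A^{s+1} \ell(Q)^{s/2}$ (integrating the Lipschitz bound against $\mu(B(x_Q, A\ell(Q))) \leq \Lambda A^s \ell(Q)^s$, using mean zero to subtract the value at a boundary point as in the proof of Lemma \ref{1closetophi}).

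The main step is the pairwise estimate. I would split into two regimes. When $\ell(Q) = \ell(Q')$ (same generation), the supports $B(x_Q, A\ell(Q))$ and $B(x_{Q'}, A\ell(Q'))$ have bounded overlap: a fixed cube of side $\ell$ meets at most $C A^d$ balls of radius $A\ell$ centered at lattice points of side $\ell$. Combined with the $L^\infty$ and $L^1$ bounds, $|\langle \psi_Q, \psi_{Q'}\rangle_\mu| \leq \|\psi_Q\|_{L^\infty(\mu)} \|\psi_{Q'}\|_{L^1(\mu)} \leq C A^{s+2}$, this handles the diagonal block. When $\ell(Q') < \ell(Q)$ (say $\ell(Q') = 2^{-j}\ell(Q)$, $j \geq 1$), I use the mean-zero property of $\psi_{Q'}$: if $\supp(\psi_{Q'}) \subset \supp(\psi_Q) \cup (\text{a neighborhood})$ then $\langle \psi_Q, \psi_{Q'}\rangle_\mu = \langle \psi_Q - \psi_Q(y_{Q'}), \psi_{Q'}\rangle_\mu$ for a point $y_{Q'}$ in $\supp(\psi_{Q'})$, and the Lipschitz bound on $\psi_Q$ gives an extra gain: $|\psi_Q(x) - \psi_Q(y_{Q'})| \leq \ell(Q)^{-1-s/2} \cdot A\ell(Q') = A \cdot 2^{-j}\ell(Q)^{-s/2}$ on $\supp(\psi_{Q'})$. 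Then $|\langle \psi_Q, \psi_{Q'}\rangle_\mu| \leq C A^{s+2} 2^{-j}$ (with the $\mu$-mass of $\supp(\psi_{Q'})$ supplying a $2^{-js}$... one has to be slightly careful to track which power), and for $Q'$ with support disjoint from $\supp(\psi_Q)$ the pairing simply vanishes. Summing a Schur bound of the form $\sum_{Q'} |\langle \psi_Q,\psi_{Q'}\rangle_\mu| \leq C A^{d+2+3s/2}$ over all generations (the geometric series in $j$ converges, and at each generation there are $\lesssim A^d$ relevant cubes) gives the result; by symmetry the same bound holds for column sums, and Schur's lemma closes the argument.

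The hard part will be bookkeeping the power of $A$: each of the three effects — the overlap count of supports ($A^d$), the product of $L^\infty$ and $L^1$ norms ($A^{s+2}$), and the extra factor coming from comparing $\|\psi_Q\|_{L^\infty}$-type and $\|\psi_Q\|_{L^1}$-type bounds across scales ($A^{s/2}$) — must be combined to land exactly on the claimed exponent $d + 2 + \tfrac{3s}{2}$, and one has to be careful that the Schur test is applied symmetrically so the final constant is the geometric mean of the row and column sums (here both are the same). A secondary technical point is the treatment of pairs $Q, Q'$ whose supports overlap only partially (neither contains the other) when the scales are comparable but not equal — these are absorbed into the bounded-overlap count at each of the finitely many intermediate generations. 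None of this is conceptually difficult; it is the standard wavelet/paraproduct almost-orthogonality scheme adapted to the non-doubling, $\Lambda$-nice setting, and the explicit dependence on $A$ is obtained simply by not discarding it.
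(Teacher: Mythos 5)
Your pairwise estimates are essentially the ones the paper uses (the $L^{\infty}$ and $L^1$ bounds, and the oscillation gain from the mean--zero property across scales), but the final summation step has a genuine gap. The count ``at each generation there are $\lesssim A^d$ relevant cubes'' is only correct for generations \emph{coarser} than $Q$. For finer generations the number of cubes $Q'$ with $\ell(Q')=2^{-j}\ell(Q)$ whose supports $B(x_{Q'},A\ell(Q'))$ meet $B(x_Q,A\ell(Q))$ is of order $(A2^j)^d$, not $A^d$. With your per-pair bound $CA^{s+2}2^{-j(1+s/2)}$, the row sum of the Gram matrix over finer cubes then behaves like $A^{s+2}\sum_j (A2^j)^d\,2^{-j(1+s/2)}$, which diverges whenever $d>1+\tfrac{s}{2}$ (already for $d=2$, $s=1$). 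So the plain, unweighted Schur test as you set it up does not close.

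Two further ingredients are needed, and they are the real content of the paper's argument. First, you must not replace $\mu(B(x_{Q'},A\ell(Q')))$ by its worst-case value $\Lambda A^s\ell(Q')^s$ before summing: at a fixed fine generation the balls $B(x_{Q'},A\ell(Q'))$ have overlap at most $CA^d$ and all lie in $B(x_Q,3A\ell(Q))$, so $\sum_{Q'}\mu(B(x_{Q'},A\ell(Q')))\leq CA^{d}\mu(B(x_Q,3A\ell(Q)))\leq CA^{d+s}\ell(Q)^s$; it is this packing estimate, not a cube count, that controls the fine scales. Second, even with the packing estimate the unweighted row sums still carry a factor $\sum_j 2^{-j(1-s/2)}$, which diverges for $s\geq 2$ (a case allowed when $d\geq 3$); one needs the weighted, asymmetric form of Schur's test. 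The paper implements this by bounding $|a_{Q'}||a_{Q''}||\langle\psi_{Q'},\psi_{Q''}\rangle_{\mu}|$ via Cauchy's inequality by a term $\tfrac12|a_{Q'}|^2\tfrac{\ell(Q')}{\ell(Q'')}$ (summed over coarser cubes, where the $CA^d$ count is valid and the geometric series is $\sum_k 2^{-k}$) plus a term $\tfrac12|a_{Q''}|^2\tfrac{\ell(Q')}{\ell(Q'')}\tfrac{\mu(B(x_{Q'},A\ell(Q')))}{\ell(Q'')^s}$ (summed over finer cubes using the packing estimate, with the full factor $2^{-j}$ intact). Your outline needs both corrections before the bookkeeping of powers of $A$ even becomes the issue.
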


The proof is a rather standard piece of harmonic analysis, and as such, is relegated to an appendix.  We now bring reflectionless measures into the picture.

\subsection{Riesz systems and reflectionless measures}

Suppose now that $\mu$ is a $\Lambda $-good measure.  For $A'>2A$, define $$\Theta^{A,A'}_{\mu}(Q) = \inf_{E\supset B(x_Q, A'\ell(Q'))}\sup_{\psi\in \Psi_A^{\mu}(Q)}\Bigl[\frac{1}{\ell(Q)^{\tfrac{s}{2}}}\bigl|\langle T_{\mu}(\chi_E), \psi\rangle_{\mu}\bigl|\Bigl].  $$

We have the following dichotomy,

\begin{prop}\label{lowbdtheta}  Either there exists a non-trivial $\Lambda$-good reflectionless measure, or the following property holds
\begin{equation}\tag{$*$}\begin{split}
   &\textit{For every }\Delta>0 \textit{ there exists }A=A(\Delta)>0, A'=A'(\Delta)>2A,\\
   & \textit{and }\eps=\eps(\Delta)>0, \textit{ such that for every }\Lambda\textit{-good measure }\mu,\\
   &\textit{if } Q\in \mathcal{D}\textit{ satisfies }\mu(Q)\geq \Delta \ell(Q)^s,\textit{ then } \Theta_{A,A'}^{\mu}(Q)\geq \eps.
\end{split}\end{equation}
\end{prop}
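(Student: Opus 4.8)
The plan is to argue by contradiction: assuming that property $(*)$ fails, I will build a non-trivial $\Lambda$-good reflectionless measure, which forces the first alternative to hold.

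Suppose $(*)$ fails. Then there is a single $\Delta>0$ such that for \emph{every} admissible triple $(A,A',\eps)$ (with $A>10\sqrt d$, $A'>2A$, $\eps>0$) one can find a $\Lambda$-good measure $\mu$ and a cube $Q\in\mathcal D$ with $\mu(Q)\ge\Delta\ell(Q)^s$ and $\Theta^{A,A'}_{\mu}(Q)<\eps$. First I would feed in the sequences $A_k=k$, $A'_k=k^{1+2(s+1)/\alpha}$ and $\eps_k=\tfrac{1}{k}$, which for all large $k$ satisfy $A'_k>2A_k$ and, crucially, $A_k^{s+1}(A_k/A'_k)^{\alpha}=k^{-(s+1)}\to 0$. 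This produces $\Lambda$-good measures $\mu_k$ and cubes $Q_k\in\mathcal D$ with $\mu_k(Q_k)\ge\Delta\ell(Q_k)^s$ and $\Theta^{A_k,A'_k}_{\mu_k}(Q_k)<\eps_k$. Next I would normalize: let $\nu_k$ be the pushforward of $\mu_k$ under $x\mapsto(x-x_{Q_k})/\ell(Q_k)$, multiplied by $\ell(Q_k)^{-s}$. By translation- and dilation-covariance of CZOs, $\nu_k$ is again $\Lambda$-good, and a change of variables (using $K(\lambda x)=\lambda^{-s}K(x)$ and the scaling of $\|\cdot\|_{\Lip}$) shows that $\frac{1}{\ell(Q)^{s/2}}|\langle T_{\mu}(\chi_E),\psi\rangle_{\mu}|$ is invariant under this normalization; since $\Theta$ and $\Psi^{\mu}_{Q,A}$ depend only on the center and side length of the cube, it follows that $\Theta^{A_k,A'_k}_{\nu_k}(Q_0)=\Theta^{A_k,A'_k}_{\mu_k}(Q_k)<\eps_k$ for the unit cube $Q_0$ centered at the origin, while $\nu_k(Q_0)\ge\Delta$. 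As $\ell(Q_0)=1$ we have $\Psi^{\nu_k}_{Q_0,A_k}=\Phi^{\nu_k}_{A_k}$ and the factor in $\Theta$ equals $1$, so there is a set $E_k\supset B(0,A'_k)$ with $|\langle T_{\nu_k}(\chi_{E_k}),\psi\rangle_{\nu_k}|<\eps_k$ for all $\psi\in\Phi^{\nu_k}_{A_k}$.

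The second step is to upgrade this to a bound on the distribution $T_{\nu_k}(1)$. Fix $\psi\in\Phi^{\nu_k}_{A_k}$ and choose $R<A_k$ with $\supp\psi\subset B(0,R)$. Since $\chi_{E_k}\equiv 1$ on $B(0,A'_k)$ and $0\le\chi_{E_k}\le 1$, the proof of Lemma \ref{1closetophi} applies with $\varphi=\chi_{E_k}$ and $R'=A'_k$; to respect the $L^2(\nu_k)$ requirement literally one truncates $E_k$ to a large ball, controls the discarded tail by Lemma \ref{taildiff} together with the $\nu_k$-mean-zero property of $\psi$, and lets the truncation radius tend to infinity. This yields $|\langle T_{\nu_k}(1),\psi\rangle_{\nu_k}-\langle T_{\nu_k}(\chi_{E_k}),\psi\rangle_{\nu_k}|\le C_4 R^{s+1}(R/A'_k)^{\alpha}\le C_4 A_k^{s+1}(A_k/A'_k)^{\alpha}$, so altogether $|\langle T_{\nu_k}(1),\psi\rangle_{\nu_k}|\le\gamma_k:=\eps_k+C_4 A_k^{s+1}(A_k/A'_k)^{\alpha}$ for every $\psi\in\Phi^{\nu_k}_{A_k}$, with $\gamma_k\to 0$ by the choice of the sequences.

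For the last step, the $\nu_k$ being $\Lambda$-nice gives $\sup_k\nu_k(B(0,\rho))<\infty$ for each $\rho>0$, so a subsequence converges weakly to a measure $\mu$. By the facts recalled in Section \ref{primer}, $\mu$ is $\Lambda$-good, and weak convergence gives $\mu(\overline{Q_0})\ge\limsup_k\nu_k(\overline{Q_0})\ge\Delta>0$, so $\mu\neq 0$ (and $\mu$ is automatically non-atomic, being $\Lambda$-nice). Now Lemma \ref{smalldiff}, applied with $\widetilde R_k=A_k\to\infty$ (hence $\widetilde R=\infty$), the sequence $\gamma_k$, and the identity $\Phi^{\nu_k}_{\widetilde R_k}=\Phi^{\nu_k}_{A_k}$, gives $\langle T_{\mu}(1),\psi\rangle_{\mu}=0$ for all $\psi\in\Phi^{\mu}$; that is, $\mu$ is reflectionless. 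Thus $\mu$ is a non-trivial $\Lambda$-good reflectionless measure, which is the conclusion we sought.

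I expect the crux to be the balancing of scales between the second and third steps: passing from $\langle T_{\nu_k}(\chi_{E_k}),\psi\rangle$ to $\langle T_{\nu_k}(1),\psi\rangle$ costs a factor $R^{s+1}$ (Lemma \ref{1closetophi}), whereas Lemma \ref{smalldiff} forces control of $T_{\nu_k}(1)$ over the whole family $\Phi^{\nu_k}_{\widetilde R_k}$ with $\widetilde R_k\to\infty$, so $R$ may be as large as $A_k$; the two demands are compatible only if $A_k^{s+1}(A_k/A'_k)^{\alpha}\to 0$, which is precisely why one must (and can) take the outer radius $A'_k$ polynomially larger than $A_k$. A minor technical point is the legitimacy of using the indicator $\chi_{E_k}$ in place of an $L^2$ cut-off in Lemma \ref{1closetophi}, handled by the truncation above.
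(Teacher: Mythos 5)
Your argument is correct and follows the same essential strategy as the paper: negate $(*)$, rescale the offending cubes to unit scale so that $\Psi^{\mu}_{Q,A}$ becomes $\Phi^{\mu}_{A}$, upgrade the smallness of $\langle T(\chi_{E}),\psi\rangle$ to smallness of the distribution $T(1)$ via Lemma \ref{1closetophi}, and pass to a weak limit with Lemma \ref{smalldiff}. The one structural difference is how the limit is organized. The paper fixes $A$ and first sends $(A',\eps)=(k,1/k)$ to the limit, obtaining for each $A$ a measure $\mu^{(A)}$ that annihilates all of $\Phi^{\mu^{(A)}}_{A}$, and only then lets $A\to\infty$ along a second weakly convergent subsequence; in that scheme $A$ and $A'$ never need to be coupled. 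You instead run a single diagonal sequence $A_k=k$, $A'_k=k^{1+2(s+1)/\alpha}$, $\eps_k=1/k$, which succeeds precisely because your polynomial coupling forces $A_k^{s+1}(A_k/A'_k)^{\alpha}\to 0$, so Lemma \ref{smalldiff} applies once with $\widetilde R_k\to\infty$. Both are valid; yours is a bit shorter, the paper's avoids any balancing of scales. You are also right to flag that $\chi_{E_k}$ need not lie in $L^2(\nu_k)$ when $E_k$ is unbounded; your truncation argument repairs this, a point the paper passes over silently. Finally, note that you prove only the implication ``$(*)$ fails $\Rightarrow$ a non-trivial reflectionless measure exists,'' which does establish the stated disjunction and is the only direction invoked later (in the proof of Theorem \ref{startriv}); the paper's proof additionally contains the converse half --- if a non-trivial $\Lambda$-good reflectionless measure exists, then for a cube $Q$ with $\mu(Q)>0$ one takes $E=B(x_Q,R)$ and lets $R\to\infty$ in Lemma \ref{1closetophi} to get $\Theta^{A,A'}_{\mu}(Q)=0$, so $(*)$ fails --- which shows the two alternatives are mutually exclusive. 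If the proposition is read as an exclusive dichotomy, that short extra paragraph should be added.
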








\begin{proof}
Suppose first that ($*$) fails to hold.  Temporarily fix $A\in \mathbb{N}$, $A>10\sqrt{d}$.  Then for some small $\Delta>0$ (independent of $A$), and each $k\in \mathbb{N}$, $k>2A$, there is a $\Lambda $-good measure $\mu_k$, a dyadic cube $Q_{k}\in \mathcal{D}$ with $\mu_k(Q_k)\geq \Delta \ell(Q_k)^s$, and a set $E_k\supset B(x_{Q_k}, k\ell(Q_k))$ such that $$\ell(Q_k)^{-\tfrac{s}{2}} \bigl|\langle T_{\mu_k}(\chi_{E_k}), \psi\rangle_{\mu_k}\bigl|\leq \frac{1}{k}, $$
for all $\psi\in \Psi_{A}^{\mu_k}(Q_k)$.

Consider the measure given by $\tilde\mu_k(F) = \tfrac{\mu(\ell(Q_k)\cdot F+ x_{Q_k})}{\ell(Q_k)^s}$, $F\subset \mathbb{R}^d$ Borel, and set $\widetilde{E}_k = \tfrac{E_k - x_{Q_k}}{\ell(Q_k)}$.  Then  $\tilde{\mu}_k ([-\tfrac{1}{2}, \tfrac{1}{2}]^d)\geq \Delta$, and
 $$ \bigl|\langle T_{\tilde{\mu}_k}(\chi_{\widetilde{E}_k}), \psi\rangle_{\tilde{\mu}_k}\bigl|\leq \frac{1}{k} \text{ for any } \psi\in \Phi_{A}^{\tilde{\mu}_k}.$$
(Here the homogeneity in the CZ-kernel is used.)  Each $\tilde\mu_k$ is $\Lambda $-good, so by passing to a subsequence we may assume that the measures $\tilde\mu_k$ converge weakly to a $\Lambda $-good measure $\mu^{(A)}$.  Note that $\mu^{(A)}([-\tfrac{1}{2}, \tfrac{1}{2}]^d)\geq \Delta$.  Furthermore, Lemma \ref{1closetophi} (applied with $R=A$ and $R'=k$) ensures that
$$ \bigl|\langle T_{\tilde{\mu}_k}(1), \psi\rangle_{\tilde{\mu}_k}\bigl|\leq \frac{1}{k}+\frac{CA^{s+1+\alpha}}{k^{\alpha}} \text{ for any } \psi\in \Phi_{A}^{\tilde{\mu}_k}.$$
Thus, applying  Lemma \ref{smalldiff} to the convergent subsequence of $(\tilde\mu_k)_k$, with $\gamma_k = \tfrac{1}{k}+\tfrac{CA^{s+1+\alpha}}{k^{\alpha}}$, and $\tilde{R}_k = A$, yields $$ \langle T_{\mu^{(A)}}(1), \psi\rangle_{\mu^{(A)}}=0 \text{ for any } \psi\in \Phi_{A}^{\mu^{(A)}}.$$

Now let $A\rightarrow \infty$.  Since each measure $\mu^{(A)}$ is $\Lambda $-good, there is a subsequence $A_{\ell}\rightarrow \infty$ so that $\mu^{(A_{\ell})}$ converges to a $\Lambda $-good measure $\mu$ with $\mu([-\tfrac{1}{2}, \tfrac{1}{2}]^d)\geq \Delta$.  Applying Lemma \ref{smalldiff} once again, with $\gamma_{\ell} = 0$ and $\widetilde{R}_{\ell} = A_{\ell}$, yields that $\mu$ is reflectionless.

On the other hand, suppose that $\mu$ is a non-trivial $\Lambda$-good reflectionless measure.  Choose a cube $Q\in \mathcal{D}$ for which $\mu(Q)>0$.  By an appropriate translation and resealing, we may suppose that $Q=[0,1)^d$.  Fix $A>10\sqrt{d}$, and $A'>2A$.  Suppose that $E= B(x_Q, R)$ for $R>2A'$.  Then for any $\psi\in \Phi^{\mu}_{2A}$ (a set of functions containing $\Psi_{A}^{\mu}(Q)$), Lemma \ref{1closetophi} yields
$|\langle T_{\mu}(\chi_{E}), \psi\rangle_{\mu}|\leq \frac{CA^{s+1+\alpha}}{R^{\alpha}}.$ 

Letting $R\rightarrow \infty$, we see that $\Theta^{A,A'}_{\mu}(Q)=0$,  which precludes the possibility that ($*$) holds.\end{proof}

\subsection{The proof of Theorem \ref{reducerefl}}

Recall that $\mu$ is called $s$-dimensional if $\mu$ is $\Lambda $-nice, and $\mathcal{H}^s(\supp(\mu))<\infty$.  In this case $\mu(\mathbb{R}^d)\leq \Lambda  \mathcal{H}^s(\supp(\mu))<\infty.$  Let us first recall Theorem \ref{reducerefl}.

\begin{thm}\label{startriv}  Suppose that there does not exist a non-trivial reflectionless measure.  If $\mu$ is an $s$-dimensional $\Lambda$-good measure, then $\mu\equiv 0$.
\end{thm}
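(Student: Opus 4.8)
The plan is to argue by contradiction via a blow-up (tangent measure) construction that manufactures a non-trivial reflectionless measure out of a hypothetical non-zero $s$-dimensional $\Lambda$-good measure $\mu$, contradicting the hypothesis. First I would invoke Proposition \ref{lowbdtheta}: since no non-trivial reflectionless measure exists, property ($*$) must hold. The strategy is then to show that property ($*$) is incompatible with the existence of a non-trivial $s$-dimensional $\Lambda$-good measure, by exploiting the finiteness $\mathcal{H}^s(\supp\mu)<\infty$ to find cubes where $\mu$ has substantial mass (density bounded below by some $\Delta$) \emph{and} the relevant $T_\mu(1)$-type quantity is forced to be small — contradicting the quantitative lower bound $\Theta^{\mu}_{A,A'}(Q)\geq\eps$ guaranteed by ($*$).

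The key steps, in order: (1) Suppose $\mu\not\equiv 0$; since $\mu$ is $s$-dimensional, $\mu(\mathbb{R}^d)<\infty$. Apply property ($*$) with some fixed $\Delta$ (to be chosen, say $\Delta$ comparable to a suitable density of $\mu$), obtaining $A=A(\Delta)$, $A'=A'(\Delta)$, $\eps=\eps(\Delta)$. (2) Use the Riesz family property (Lemma \ref{systemlem}): the functions $\Psi^{\mu}_{Q,A}$ ($Q\in\mathcal{D}$) form a $C_5 A^{d+2+3s/2}$-Riesz family, so for a fixed $f\in L^2(\mu)$ (a natural choice is to take $f = \chi_{E}$-type truncations, or to work with $T_\mu^{\delta}(\chi_{B})$ and the last bullet in Section \ref{primer} giving $\int_B T_\mu^\delta(\chi_B)d\mu = 0$), the sum $\sum_{Q}\ell(Q)^{-s}|\langle T_\mu(\chi_{E_Q}),\psi_Q\rangle_\mu|^2$ over cubes is controlled by $\|f\|_{L^2(\mu)}^2 < \infty$, hence \emph{finite}. (3) On the other hand, ($*$) says every cube $Q$ with $\mu(Q)\geq\Delta\ell(Q)^s$ contributes at least $\eps^2$ to an analogous sum (after choosing the near-optimal $\psi$ and the near-optimal $E\supset B(x_Q,A'\ell(Q))$ in the definition of $\Theta$). (4) Therefore it suffices to produce infinitely many (indeed, a sum with infinite total) cubes $Q$ with $\mu(Q)\geq\Delta\ell(Q)^s$ to reach a contradiction with the finiteness from step (2). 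This is a covering/stopping-time argument: because $\mathcal{H}^s(\supp\mu)<\infty$ and $\mu$ is non-trivial, the set of points of positive upper $s$-density is $\mu$-conegligible; a Vitali/dyadic stopping-time selection then yields, at every small scale, a family of disjoint-enough ``heavy'' cubes whose total weighted count diverges — here one uses that if only finitely many dyadic cubes were heavy then $\mu$ would have to be supported on a set of $\mathcal{H}^s$-measure zero, or that the density-$\Delta$ condition propagates to arbitrarily small scales $\mu$-a.e. One must be slightly careful to organize these heavy cubes so that the corresponding sets $E_Q$ and test functions $\psi_Q$ can be fed into a single Riesz-family estimate (dealing with overlaps by a standard sparse/Carleson packing argument, or by summing over finitely many ``shifted'' dyadic lattices).

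I expect the main obstacle to be step (4) together with its interface with the Riesz family bound: making the bookkeeping precise so that the weighted count of heavy cubes genuinely diverges while the test functions remain within a single $C$-Riesz family. The density lower bound from finiteness of $\mathcal{H}^s$ gives heaviness at small scales $\mu$-a.e., but one needs it summably often and with the cubes arranged so that the $\Psi^{\mu}_{Q,A}$ machinery applies; this is where the choice of $\Delta$ (relative to the actual density of $\mu$) and a careful selection of a subfamily of dyadic cubes enters. The CZ estimates themselves (Lemmas \ref{taildiff}, \ref{1closetophi}) are routine once the combinatorial skeleton is in place, and the weak-convergence/compactness tools are only needed inside Proposition \ref{lowbdtheta}, which we are permitted to cite.
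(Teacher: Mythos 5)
Your proposal follows the paper's proof through its first half---invoke Proposition \ref{lowbdtheta} to obtain $(*)$, then combine $(*)$ with the Riesz family property to extract quantitative control on the heavy cubes $\mathcal{D}_{\Delta}=\{Q:\mu(Q)\geq\Delta\ell(Q)^s\}$---but your endgame is genuinely different, and both routes work. One bookkeeping correction first: the Riesz family bound controls $\sum_Q|\langle f,\psi_Q\rangle_{\mu}|^2$ with \emph{no} weight $\ell(Q)^{-s}$, while $(*)$ supplies, for each heavy $Q\subset P$, a near-optimal $\psi_Q$ with $|\langle T_{\mu}(\chi_E),\psi_Q\rangle_{\mu}|^2\geq\tfrac12\eps^2\ell(Q)^s$ for the single set $E=B(x_P,2A'\ell(P))$ (admissible for every $Q\subset P$, which disposes of your worry about the sets $E_Q$). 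What you actually obtain is therefore the Carleson packing condition $\sum_{Q\subset P,\,Q\in\mathcal{D}_{\Delta}}\ell(Q)^s\leq C(\Delta)\ell(P)^s$, \emph{not} finiteness of the number of heavy cubes; so your step (4) must produce heavy cubes with $\sum\ell(Q)^s=\infty$, which is presumably what your ``total weighted count diverges'' means. That is achievable: since $\mathcal{H}^s(\supp(\mu))<\infty$, the upper $s$-density of $\mu$ is positive $\mu$-a.e., so for some fixed $\Delta>0$ the set of points of upper density $>\Delta$ has positive measure inside some $P$; a Vitali extraction of disjoint balls of radius at most $2^{-n_k}$, each of density $\geq\Delta$ and together carrying a fixed fraction of that mass, yields (after converting balls to dyadic cubes, at the cost of a dimensional constant in $\Delta$) a generation of heavy cubes with $\sum\ell(Q)^s\geq c/\Lambda$ times that mass, and running this at infinitely many well-separated scales $n_k$ makes the total diverge. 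The paper instead keeps $\Delta$ as a free parameter and argues directly: the Carleson condition forces $\sum_{Q\in\mathcal{D}_{\Delta},\,\ell(Q)\leq\ell_0}\ell(Q)^s\leq\Delta$ for small $\ell_0$; an efficient cover of $P\cap\supp(\mu)$ by cubes of side $\leq\ell_0$ (which exists precisely because $\mathcal{H}^s(\supp(\mu))<\infty$) then shows the light cubes carry mass at most $\Delta\sum_j\ell(Q_j)^s\lesssim\Delta\,\mathcal{H}^s(P\cap\supp(\mu))$ while the heavy ones carry mass $\lesssim\Delta$, whence $\mu(P)\lesssim\Delta\rightarrow0$. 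The paper's route avoids both the density comparison theorem and the Vitali covering theorem, using only the definition of Hausdorff measure; yours fixes a single $\Delta$ and reaches a cleaner contradiction, at the price of those two classical inputs and the multi-scale selection that you correctly identify as the main point to pin down.
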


\begin{proof} Since there is no non-trivial reflectionless measure, Proposition \ref{lowbdtheta} yields that property $(*)$ holds.  If $\mu(\mathbb{R}^d)>0$, then there exists $P\in \mathcal{D}$ with $\mu( P)>0$.  Fix $\Delta \in (0,1)$, and set $\mathcal{D}_{\Delta} = \{Q\in \mathcal{D}: \mu(Q)\geq \Delta\ell(Q)^s\}$.  For each $Q\subset P$ with $Q\in \mathcal{D}_{\Delta}$, there exists $\psi_Q\in \Psi_{A}^{\mu}(Q)$ such that 
$$\ell(Q)^{s}\leq C(\Delta)\bigl|\langle T_{\mu}(\chi_{B(x_P, 2A'\ell(P))}), \psi_Q\rangle_{\mu}\bigl|^2,
$$
where $C(\Delta)>0$ is a constant that may depend on $d$, $s$, $\alpha$, $\Lambda$ ,and $\Delta$ (and can change from line to line for the rest of this proof).

Recalling that $\Psi_A^{\mu}(Q)$ ($Q\in \mathcal{D}$) is a $C(\Delta)$-Riesz system (Lemma \ref{systemlem}), we infer that
\begin{equation}\label{sumcarl}\sum_{Q\subset P: Q\in \mathcal{D}_{\Delta}} \ell(Q)^s \leq C(\Delta)||T_{\mu}(\chi_{B(x_P, 2A'\ell(P))})||_{L^2(\mu)}^2\leq C(\Delta)\ell(P)^s.\end{equation}
Consequently, there exists a dyadic fraction $\ell_0>0$ such that $$\sum_{Q\subset P: Q\in \mathcal{D}_{\Delta}, \,\ell(Q)\leq \ell_0} \ell(Q)^s\leq \Delta.$$
Inasmuch as $\mathcal{H}^s(P\cap \supp(\mu))<\infty$, there are cubes $Q_j\in \mathcal{D}$, with $Q_j\subset P$, $\ell(Q_j)\leq \ell_0$, $P\,\cap \,\supp(\mu) \subset \cup_j Q_j$, and $\sum_j \ell(Q_j)^s <12^d(\mathcal{H}^s(P\cap \supp(\mu))+\Delta)$\footnote{Indeed, let $(B_j)_j$ be a cover of $P\cap \supp(\mu)$ by balls of radius $r_j\leq \tfrac{\ell_0}{4}$ with $B_j\cap (P\cap \supp(\mu))\neq \varnothing$ and $\sum_j r_j^s<\mathcal{H}^s(P\cap \supp(\mu))+\Delta$.  Each set $B_j\cap P$ is covered by at most $3^d$ cubes $Q\in \mathcal{D}$ with $Q\subset P$ and $\ell(Q)\in (2r_j,4r_j]$.  The collection of cubes obtained in this way satisfy the required properties.}. But then,
$$\sum_j \mu(Q_j) = \sum_{j: Q_j\not\in \mathcal{D}_{\Delta}}\mu(Q_j) +\sum_{j: Q_j\in \mathcal{D}_{\Delta}}\mu(Q_j)  \leq \Delta \sum_j \ell(Q_j)^s+\Delta,
$$
which is bounded by $12^d\Delta[\mathcal{H}^s(P\cap \supp(\mu))]+(2^{s}+1)\Delta$.  Letting $\Delta\rightarrow 0$, we conclude that $\mu(P)=0$.
\end{proof}

\section{The function $\T1_{\mu}(1)$}  In order to study the finer properties of a reflectionless measure $\mu$, we shall require a point-wise defined function $\T1_{\mu}(1)$.  The definition may initially seem rather cumbersome, but as we shall see, it is particularly well suited to the study of reflectionless measures.

Suppose that $\mu$ is a nontrivial $\Lambda $-good measure. Fix a ball $B'$ with $\mu(B')>0$, along with a nonnegative function $\eta_{B'}\in \Lip_0(B')$ satisfying $\int_{B'}\eta_{B'}d\mu=1$.  Choose a ball $B$ containing $\supp(\psi)$ with $B\supset B'$.  Then set $\varphi\in L^2(\mu)$ with compact support, satisfying $\varphi\equiv 1$ on $2B$, $0\leq \varphi\leq 1$ on $\mathbb{R}^d$.  Define $\T1_{\mu,\delta}(1)(x)$ for $x\in B$ by
\begin{equation}\begin{split}\nonumber\T1_{\mu,\delta}(1)&(x)= T_{\mu,\delta}(\varphi)(x) - \int_{B'}\eta_{B'} T_{\mu}(\varphi)d\mu \\
&+ \int_{\mathbb{R}^d}(1-\varphi(y))\int_{B'}\eta_{B'}(z)\bigl[K_{\delta}(x-y)-K(z-y) \bigl]d\mu(z)d\mu(y).
\end{split}\end{equation}
Lemma \ref{taildiff} yields that $\int_{\mathbb{R}^d\backslash 2B}|K_{\delta}(x-y)-K(z-y)|d\mu(y)\leq C$ for any $x,z\in B$, and so $\widetilde{T}_{\mu, \delta}(1)$ is finite.  The value of $\widetilde{T}_{\mu,\delta}(1)(x)$ is independent of the particular choices of $B$ and $\varphi$.  This follows (like several calculations in this section) in the same manner as the fact that the distribution $T_{\mu}(1)$ is independent of the choices of $B$ and $\varphi$ in its definition (see Section \ref{T1dist}).  We therefore leave the verification to the reader.

The function $\T1_{\mu,\delta}(1)$ is H\"{o}lder continuous:

\begin{lem}\label{Lipest} Suppose that $\mu$ is a $\Lambda $-good measure.  There is a constant $C_6>0$ such that for any $\delta>0$, and $x,x'\in \mathbb{R}^d$, $$|\T1_{\mu, \delta}(1)(x)-\T1_{\mu,\delta}(1)(x')|\leq \frac{C_6|x-x'|^{\alpha}}{\delta^{\alpha}}\max\Bigl(1,\frac{|x-x'|}{\delta}\Bigl)^{1-\alpha}.$$\end{lem}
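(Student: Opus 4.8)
The plan is to reduce the statement to an absolutely convergent integral involving the kernel difference $K_\delta(x-y)-K_\delta(x'-y)$, and then to estimate that integral by splitting it into a piece near $\{x,x'\}$ and a tail. Since the value of $\T1_{\mu,\delta}(1)$ at a point does not depend on the auxiliary ball $B$ and cutoff $\varphi$ entering its definition, I would compute $\T1_{\mu,\delta}(1)(x)$ and $\T1_{\mu,\delta}(1)(x')$ using one common ball $B\supset B'$ containing both $x$ and $x'$ and one common $\varphi$. In the difference the $x$-independent term $\int_{B'}\eta_{B'}T_\mu(\varphi)\,d\mu$ cancels; the difference of the $T_{\mu,\delta}(\varphi)$ terms is $\int_{\mathbb{R}^d}[K_\delta(x-y)-K_\delta(x'-y)]\varphi(y)\,d\mu(y)$; and in the last term the $K(z-y)$ contributions cancel while $\int_{B'}\eta_{B'}\,d\mu=1$ collapses the inner integral, leaving $\int_{\mathbb{R}^d}(1-\varphi(y))[K_\delta(x-y)-K_\delta(x'-y)]\,d\mu(y)$. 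Adding the two and recombining $\varphi+(1-\varphi)=1$,
\[
\T1_{\mu,\delta}(1)(x)-\T1_{\mu,\delta}(1)(x')=\int_{\mathbb{R}^d}\bigl[K_\delta(x-y)-K_\delta(x'-y)\bigr]\,d\mu(y);
\]
this integral converges absolutely even though $\int K_\delta(x-y)\,d\mu(y)$ need not, because the integrand is $\le 2\delta^{-s}$ everywhere (hence $\mu$-integrable near $\{x,x'\}$) and is $O(|x-x'|^\alpha|x-y|^{-s-\alpha})$ for $y$ far away, which is $\mu$-integrable by Lemma \ref{tailest}.

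Next I would record two properties of $K_\delta$. The size bound $|K_\delta(z)|\le\max(\delta,|z|)^{-s}$ is immediate from $|\Omega(z)|\le|z|$. The one that does real work is a genuine H\"older bound near the origin: if $|z|,|z'|\le 5\delta$ and $|z-z'|\le\delta$ then $|K_\delta(z)-K_\delta(z')|\le C\delta^{-s-\alpha}|z-z'|^\alpha$. This follows from the splitting
\[
K_\delta(z)-K_\delta(z')=[\Omega(z)-\Omega(z')]\max(\delta,|z|)^{-s-1}+\Omega(z')\bigl[\max(\delta,|z|)^{-s-1}-\max(\delta,|z'|)^{-s-1}\bigr],
\]
estimating the first term via $|\Omega(z)-\Omega(z')|\le C\max(|z|,|z'|)^{1-\alpha}|z-z'|^\alpha$ (a routine consequence of properties (i) and (iii) of $\Omega$) together with $\max(\delta,|z|)^{-s-1}\le\delta^{-s-1}$, and the second via $|\Omega(z')|\le 5\delta$ and the Lipschitz bound $|\max(\delta,r)^{-s-1}-\max(\delta,r')^{-s-1}|\le(s+1)\delta^{-s-2}|r-r'|$; the hypothesis $|z-z'|\le\delta$ is what lets one convert the residual factor $|z-z'|^{1-\alpha}$ into $\delta^{1-\alpha}$.

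Now set $h=|x-x'|$ and $\rho=\max(h,\delta)$, and split the integral over $B(x,4\rho)$ and over $\mathbb{R}^d\setminus B(x,4\rho)$. On the tail one has $|x-y|,|x'-y|\ge 3\delta$ and $|x-x'|\le\tfrac14|x-y|$, so there $K_\delta=K$ and the kernel estimate used in the proof of Lemma \ref{taildiff} gives $|K_\delta(x-y)-K_\delta(x'-y)|\le Ch^\alpha|x-y|^{-s-\alpha}$; Lemma \ref{tailest} with $\eps=\alpha$ then bounds the tail by $Ch^\alpha\rho^{-\alpha}\le Ch^\alpha\delta^{-\alpha}$. On $B(x,4\rho)$ I split into cases. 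If $h\le\delta$, then $\rho=\delta$ and for $y\in B(x,4\delta)$ the points $x-y$ and $x'-y$ lie in $B(0,5\delta)$ and differ by $h\le\delta$, so the near-origin H\"older bound gives $|K_\delta(x-y)-K_\delta(x'-y)|\le Ch^\alpha\delta^{-s-\alpha}$, and since $\mu(B(x,4\delta))\le C\delta^s$ this part is $\le Ch^\alpha\delta^{-\alpha}$. If $h>\delta$, then $\rho=h$ and I bound crudely by $|K_\delta(x-y)|+|K_\delta(x'-y)|$ using the size bound; splitting $B(x,4h)$ at radius $\delta$ (using $\mu(B(x,\delta))\le\Lambda\delta^s$ inside and, outside, $|x-y|^{-s}\le(4h)^\alpha|x-y|^{-s-\alpha}$ with Lemma \ref{tailest}) gives $\int_{B(x,4h)}|K_\delta(x-y)|\,d\mu\le\Lambda+C(h/\delta)^\alpha$, and the same estimate holds for the $x'$-integral because $B(x,4h)\subset B(x',5h)$; so this part is $\le C(h/\delta)^\alpha$. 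In either case the total is $\le Ch^\alpha\delta^{-\alpha}$, which is trivially $\le\frac{C h^\alpha}{\delta^\alpha}\max(1,\tfrac h\delta)^{1-\alpha}$, as claimed.

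The only step that is not pure bookkeeping is the near-origin H\"older bound for $K_\delta$ and --- more to the point --- recognizing that one needs it in the regime $h\le\delta$: there the cheap estimate $|K_\delta(x-y)|+|K_\delta(x'-y)|\lesssim\delta^{-s}$ would lose the decisive factor $h^\alpha$, so one must use that the regularized kernel is H\"older, not merely bounded, near the origin. Everything else reduces to Lemma \ref{tailest} and the size and smoothness properties of $K$ already invoked for Lemma \ref{taildiff}.
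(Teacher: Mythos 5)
Your proposal is correct. The reduction to the single absolutely convergent integral $\int_{\mathbb{R}^d}|K_{\delta}(x-y)-K_{\delta}(x'-y)|\,d\mu(y)$ is exactly the first (and main) step of the paper's proof as well. Where you diverge is in how that integral is estimated: the paper uses one global pointwise inequality,
$|K_{\delta}(z)-K_{\delta}(z')|\leq C|z-z'|(\delta+\min(|z|,|z'|))^{-s-1}+C|z-z'|^{\alpha}(\delta+\min(|z|,|z'|))^{-s-\alpha}$,
and integrates it directly against $\mu$ using the growth condition; the Lipschitz term is what produces the factor $\max(1,|x-x'|/\delta)^{1-\alpha}$ in the statement. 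You instead split space at radius $\max(|x-x'|,\delta)$ and use a genuine H\"older bound for $K_{\delta}$ near the diagonal (converting the residual $|z-z'|^{1-\alpha}$ into $\delta^{1-\alpha}$ there), handling the regime $|x-x'|>\delta$ by the crude size bound plus Lemma \ref{tailest}. This buys you the cleaner estimate $C(|x-x'|/\delta)^{\alpha}$ with no extra factor, which is strictly stronger than the stated bound; the paper's route is shorter to write but coarser in that regime. Either version suffices for every application of the lemma in the paper (which only ever uses it with $|x-x'|\leq\delta$ or up to a bounded multiple thereof).
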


\begin{proof}With a careful application of Fubini's theorem, it is not difficult to see that
$$|\T1_{\mu, \delta}(1)(x)-\T1_{\mu,\delta}(1)(x')|\leq \int_{\mathbb{R}^d}|K_{\delta}(x-y)-K_{\delta}(x'-y)|d\mu(y).
$$
But, for $x,x'\in \mathbb{R}^d$,
$$|K_{\delta}(x)-K_{\delta}(x')|\leq \frac{C|x-x'|}{(\delta+\min(|x|, |x'|))^{s+1}} + \frac{C|x-x'|^{\alpha}}{(\delta+\min(|x|, |x'|))^{s+\alpha}}.
$$
Replacing $x$ by $x-y$ and $x'$ by $x'-y$, and integrating the resulting estimate over $y\in \mathbb{R}^d$ with respect to $\mu$ yields the desired bound.\end{proof}

Consider two different choices of pairs $(B', \eta_{B'})$ and $(B'', \eta_{B''})$, with $\mu(B')>0,$ $\mu(B'')>0$, $\eta_{B'}\in\Lip_0(B')$, $\eta_{B''}\in \Lip_0(B'')$, and $\int_{B'}\eta_{B'}d\mu=\int_{B''}\eta_{B''}d\mu=1$.  If the ball $B$ contains both $B'$ and $B''$, and $\varphi\equiv 1$ on $2B$, then the difference between $\T1_{\mu, \delta}(1)(x)$ ($x\in B$) defined with $(B', \eta_{B'})$, and $\T1_{\mu,\delta}(1)(x)$ defined with $(B'', \eta_{B''})$, is equal to $\langle T_{\mu}(1), \eta_{B''}-\eta_{B'}\rangle_{\mu}$.  
Consequently, if $\mu$ is reflectionless, then this difference is equal to zero, and the value of $\T1_{\mu,\delta}(1)$ is independent of the choice of $B'$ and $\eta_{B'}$.

For $\Lambda$-good $\mu$, we shall now pass to the limit in $\T1_{\mu, \delta}(1)$ as $\delta$ tends to zero in two senses: in $L^2_{\text{loc}}(\mu)$, and $L^1_{\text{loc}}(m_d)$.

For a test function $\psi\in L^2(\mu)$ with compact support, choose the ball $B$ containing $\supp(\psi)$ with $B\supset B'$.  Then set $\varphi\in L^2(\mu)$, with $\varphi\equiv 1$ on $2B$, $0\leq \varphi\leq 1$ on $\mathbb{R}^d$.

Note that $\lim_{\delta\rightarrow 0} \langle T_{\mu,\delta}(\varphi),\psi\rangle_{\mu} = \langle T_{\mu}(\varphi),\psi\rangle_{\mu}$, and as long as $\delta$ is smaller than the radius of $B$, $K_{\delta}(x-y)=K(x-y)$ for any $x\in B$ and $y\in \mathbb{R}^d\backslash 2B$.  Thus
$$\lim_{\delta\rightarrow 0}\langle \T1_{\mu,\delta}(1), \psi \rangle_{\mu} = \langle \T1_{\mu}(1),\psi \rangle_{\mu},
$$
where $\T1_{\mu}(1)$ is defined by
\begin{equation}\begin{split}\label{R1func}& \T1_{\mu}(1) = T_{\mu}(\varphi)-\langle T_{\mu}(\varphi), \eta_{B'}\rangle_{\mu}\\
& + \int_{\mathbb{R}^d}\!\!(1-\varphi(y))\!\!\int_{B'}\!\!\eta_{B'}(z)\bigl[K(x\!-\!y)\!-\!K(z\!-\!y) \bigl]d\mu(z)d\mu(y).
\end{split}\end{equation}
As the limit of $\T1_{\mu, \delta}(1)$, the value of $ \T1_{\mu}(1)$ is independent of the particular choices of $B$ and $\varphi$.      Furthermore, for any choice of $B$,  $\|T_{\mu, \delta}(\chi_{2B})\|_{L^2(\mu)}\leq C \sqrt{\mu(2B)}$ for every $\delta>0$.  
Thus $\sup_{\delta>0}\|\T1_{\mu,\delta}(1)\|_{L^2(B,\mu)}\leq C(B,B')$ for any ball $B$,  from which it follows that 
 $\T1_{\mu}(1) \in L^2_{\text{loc}}(\mu)$.

For $\psi\in \Lip_0(\mathbb{R}^d)$, Fubini's theorem yields
 \begin{equation}\label{R1posdiff}\langle \T1_{\mu}(1), \psi\rangle_{\mu} = \langle T_{\mu}(1), \bigl[\psi - \langle \psi, 1\rangle_{\mu}\eta_{B'}\bigl]\rangle_{\mu}.
 \end{equation}
Since $\bigl[\psi - \langle \psi, 1\rangle_{\mu}\eta_{B'}\bigl]$ has mean zero, if $\mu$ is reflectionless then $\langle \T1_{\mu}(1), \psi\rangle_{\mu}=0$ for any $\psi\in \Lip_0(\mathbb{R}^d)$.  Consequently, for a reflectionless measure $\mu$, $\T1_{\mu}(1)=0$ for $\mu$-almost every $x\in \mathbb{R}^d$.

Returning to the case when $\mu$ is merely $\Lambda$-good, let us now pass to the limit in $\T1_{\mu, \delta}(1)$ as $\delta$ tends to zero in $L^1_{\text{loc}}(m_d)$.  To do this, first recall  that Lemma \ref{locl1} yields that $T(\varphi\mu)\in L^1_{\text{loc}}(m_d)$ (using the compact support of $\varphi$).  Thus, if we define
\begin{equation}\begin{split}\label{R1Leb}&\T1_{\mu}(1)(x) = T(\varphi \mu)(x) - \int_{B'}\eta_{B'}T_{\mu}(\varphi) d\mu \\
&+\int_{\mathbb{R}^d}(1-\varphi(y))\int_{B'}\eta_{B'}(z)\bigl[K(x-y)-K(z-y) \bigl]d\mu(z)d\mu(y), \end{split}\end{equation}
then, for any ball $D$,
$\lim_{\delta\rightarrow 0}\int_{D}|\T1_{\mu,\delta}(1) - \T1_{\mu}(1)|dm_d =0.$  This convergence ensures that $\T1_{\mu}(1)$ is independent of the choices of $B$ and $\varphi$.  Furthermore, if $\mu$ is reflectionless, then $\T1_{\mu}(1)$ is independent of the choices of $B'$ and $\eta_{B'}$.

Now fix $\delta,\tau>0$ with $\tau>\delta$.  Fubini's theorem yields that, for any $z\in \mathbb{R}^d$
$$\T1_{\mu, \delta}(1)(z) - \T1_{\mu, \tau}(1)(z) = \int_{\mathbb{R}^d}[K_{\delta}(z-y)-K_{\tau}(z-y)]d\mu(y).
$$
Consequently, if $B(x,r)\subset\mathbb{R}^d$, and $z\in B(x,r)$, then
$$\T1_{\mu, \delta}(1)(z) - \T1_{\mu, \tau}(1)(z) = \int_{B(x, r+\tau)}[K_{\delta}(z-y)-K_{\tau}(z-y)]d\mu(y).
$$
Letting $\delta\rightarrow 0$ in $L^2_{\loc}(\mu)$ yields that, for $\mu$-almost every $z\in B(x,r)$,
$$\T1_{\mu}(1)(z) - \T1_{\mu, \tau}(1)(z) = T_{\mu}(\chi_{B(x,r+\tau)})(z) - T_{\mu, \tau}(\chi_{B(x,r+\tau)})(z),
$$
which equals $T_{\mu}^{\tau}(\chi_{B(x,r+\tau)})(z).$  By instead taking the limit as $\delta\rightarrow 0$ in $L^1_{\text{loc}}(m_d)$, we see that for $m_d$-almost every $z\in B(x,r)$,
$$\T1_{\mu}(1)(z) - \T1_{\mu, \tau}(1)(z) =\int_{B(x,r+\tau)}[K(z-y)-K_{\tau}(z-y)]d\mu(y).
$$


Our next remark of this section is a Cotlar inequality for reflectionless measures.

\begin{lem}  There exists a constant $C_7>0$ such that for any non-trivial $\Lambda$-good reflectionless measure,
$$\sup_{\delta>0}|\T1_{\mu, \delta}(1)(x)|\leq C_7, \text{ for any }x\in \mathbb{R}^d.
$$
\end{lem}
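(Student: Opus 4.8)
The plan is to exploit the identity derived just before the statement: for a reflectionless measure $\mu$ and a ball $B(x,r)$, one has $\T1_{\mu}(1)(z) - \T1_{\mu,\tau}(1)(z) = T_{\mu}^{\tau}(\chi_{B(x,r+\tau)})(z)$ for $\mu$-a.e. $z \in B(x,r)$, together with the fact (established in the previous subsection) that $\T1_{\mu}(1) = 0$ $\mu$-a.e. Combining these gives $\T1_{\mu,\tau}(1)(z) = -T_{\mu}^{\tau}(\chi_{B(x,r+\tau)})(z)$ $\mu$-a.e. on $B(x,r)$. Thus, for a fixed $\tau > 0$ and fixed $x$, choosing $r = \tau$ (or any convenient comparable radius) reduces the pointwise bound on $\T1_{\mu,\tau}(1)(x)$ to an averaged bound on $T_{\mu}^{\tau}(\chi_{B(x,C\tau)})$ over a small $\mu$-neighborhood of $x$, plus the Hölder continuity of $\T1_{\mu,\tau}(1)$ from Lemma \ref{Lipest} to pass from an average to the value at $x$.

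The key steps, in order: (1) Fix $x \in \mathbb{R}^d$ and $\tau > 0$. If $\mu(B(x, c\tau)) = 0$ for small $c$, then $x$ is far (relative to $\tau$) from $\supp(\mu)$, and $\T1_{\mu,\tau}(1)(x)$ is controlled directly by a crude tail estimate of the type in Lemma \ref{taildiff}/Lemma \ref{tailest}, since the regularized kernel $K_\tau$ is bounded by $\tau^{-s}$ and $\mu$ is $\Lambda$-nice; so we may assume $\mu(B(x,c\tau)) \geq c' \tau^s$ is false only in a trivial regime and otherwise work where $\mu$ charges a definite amount near $x$. (2) On $B(x, c\tau)$, use $\T1_{\mu,\tau}(1) = \T1_{\mu}(1) - T_{\mu}^{\tau}(\chi_{B(x, (c+1)\tau)}) = -T_{\mu}^{\tau}(\chi_{B(x,(c+1)\tau)})$ $\mu$-a.e. (3) Estimate $\|T_{\mu}^{\tau}(\chi_{B(x,(c+1)\tau)})\|_{L^2(\mu)} \leq 2\Lambda \|\chi_{B(x,(c+1)\tau)}\|_{L^2(\mu)} \leq C\tau^{s/2}$, using $\|T_\mu^\tau\|_{L^2(\mu)\to L^2(\mu)} \leq 2\Lambda$ and $\Lambda$-niceness. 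Hence the $L^2(\mu)$-average of $|\T1_{\mu,\tau}(1)|$ over $B(x,c\tau)$ is at most $C\tau^{s/2}/\mu(B(x,c\tau))^{1/2} \leq C$ when $\mu(B(x,c\tau)) \gtrsim \tau^s$. (4) There is therefore a point $z_0 \in B(x,c\tau)$ with $|\T1_{\mu,\tau}(1)(z_0)| \leq C$; by Lemma \ref{Lipest}, $|\T1_{\mu,\tau}(1)(x) - \T1_{\mu,\tau}(1)(z_0)| \leq C_6 (|x - z_0|/\tau)^{\alpha} \cdot \max(1, |x-z_0|/\tau)^{1-\alpha} \leq C$ since $|x - z_0| \leq c\tau$. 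This gives $|\T1_{\mu,\tau}(1)(x)| \leq C_7$ with $C_7$ independent of $\tau$ and $x$, and taking the supremum over $\tau$ finishes.

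The main obstacle I anticipate is the bookkeeping around the case $\mu(B(x, c\tau))$ small: one needs to choose the radius $r$ in the identity adaptively (as a dyadic multiple of $\tau$) so that the ball $B(x,r)$ both has $\mu$-mass comparable to $r^s$ and stays within a bounded multiple of $\tau$, so that the Hölder modulus in Lemma \ref{Lipest} does not blow up. A clean way to handle this is to let $r$ be the smallest dyadic multiple of $\tau$ with $\mu(B(x,r)) \geq r^s$ if such an $r \leq C\tau$ exists (and argue $r \lesssim \tau$ using $\Lambda$-niceness backwards — if $\mu(B(x,2^k\tau)) < (2^k\tau)^s$ for all $k$ up to $K$ this still leaves a usable mass bound once one sums a geometric series); otherwise $x$ is genuinely in a low-density region and the crude bound applies. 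Everything else is routine given the pointwise identity and Lemmas \ref{taildiff} and \ref{Lipest}, plus the $\mu$-a.e. vanishing of $\T1_{\mu}(1)$ already proved for reflectionless $\mu$.
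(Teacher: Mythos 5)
Your overall strategy --- exploit $\widetilde{T}_\mu(1)=0$ $\mu$-a.e.\ for reflectionless $\mu$, the identity $\widetilde{T}_\mu(1)-\widetilde{T}_{\mu,\tau}(1)=T_\mu^\tau(\chi_{B})$, the $L^2(\mu)$ bound on $T_\mu^\tau$, and Lemma \ref{Lipest} to pass from a $\mu$-average to the point $x$ --- is exactly the paper's. But the step you yourself flag as the obstacle is a genuine gap, and your proposed fixes do not close it. First, the ``crude tail estimate'' in the low-density branch is not available: for a $\Lambda$-nice measure the integral $\int_{\mathbb{R}^d}(\tau+|x-y|)^{-s}\,d\mu(y)$ generally diverges at infinity (Lemma \ref{tailest} requires exponent $s+\eps$ with $\eps>0$), and $\widetilde{T}_{\mu,\tau}(1)(x)$ is a renormalized quantity that need not be small when $x$ is far from $\supp(\mu)$ --- indeed the extremal problem for Proposition \ref{genprop} rests on $\widetilde{R}_{\mu}(1)$ being nonzero off the support. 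Second, your adaptive choice of $r$ with $\mu(B(x,r))\geq r^s$ may fail at every scale (the lower $s$-density of a good measure can vanish identically), and even a scale with $\mu(B(x,r))\geq \eps r^s$ can be arbitrarily much larger than $\tau$, which destroys both the H\"{o}lder transfer at regularization scale $\tau$ and any control of $\widetilde{T}_{\mu,\tau}(1)-\widetilde{T}_{\mu,r}(1)$.

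The paper's resolution is to replace ``density bounded below'' by ``doubling'': let $j$ be the least index with $\mu(B(x,2^{j+1}\delta)) < 2^{s+1}\mu(B(x,2^{j}\delta))$ (such $j$ exists, since otherwise the growth of $\mu(B(x,2^j\delta))$ would violate $\Lambda$-niceness) and set $r=2^j\delta$. Then the normalized average of $|T_\mu^r(\chi_{B(x,2r)})|$ over $B(x,r)$ is at most $2\Lambda\sqrt{\mu(B(x,2r))/\mu(B(x,r))}\leq C$, with no absolute density hypothesis needed, and the discrepancy $|\widetilde{T}_{\mu,\delta}(1)(x)-\widetilde{T}_{\mu,r}(1)(x)|\leq 2\int_{B(x,r)}(\delta+|x-y|)^{-s}\,d\mu(y)$ is controlled precisely because, by minimality of $j$, the quantities $\mu(B(x,2^{\ell}\delta))$ decay geometrically as $\ell$ decreases from $j$. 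If you restructure your argument around this stopping time, the remaining steps of your proposal go through essentially as written.
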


Before we prove this lemma, let us note an immediate corollary of it.

\begin{cor}\label{reflinfbd}  If $\mu$ is a non-trivial $\Lambda$-good reflectionless measure, then $\|\T1_{\mu}(1)\|_{L^{\infty}(m_d)}\leq C_{7}$.
\end{cor}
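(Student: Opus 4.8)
The goal is the pointwise bound $\sup_{\delta>0}|\T1_{\mu,\delta}(1)(x)|\leq C_7$ for a non-trivial $\Lambda$-good reflectionless measure $\mu$, uniformly in $x\in\mathbb{R}^d$. The starting point is the key identity derived just before the statement: for any ball $B(x,r)$, any $\tau>0$, and $\mu$-almost every $z\in B(x,r)$,
\begin{equation}\nonumber
\T1_{\mu,\tau}(1)(z) = \T1_{\mu}(1)(z) - T_{\mu}^{\tau}(\chi_{B(x,r+\tau)})(z),
\end{equation}
and the analogous $m_d$-a.e.\ statement with $T_{\mu}^\tau(\chi_{B(x,r+\tau)})(z)$ replaced by $\int_{B(x,r+\tau)}[K(z-y)-K_\tau(z-y)]d\mu(y)$. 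Since $\mu$ is reflectionless, $\T1_{\mu}(1)=0$ $\mu$-a.e.; I would also want to observe that the $L^1_{\loc}(m_d)$-representative of $\T1_\mu(1)$ is a fixed $L^\infty$ function on balls away from $\supp\mu$ and agrees (as a distribution) appropriately — but the cleanest route is to work at a point $x\in\supp(\mu)$ first and handle general $x$ afterward by a continuity/separation argument.

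First I would fix $x\in\supp(\mu)$ and $\delta>0$, set $r=\delta$ (or a fixed multiple of $\delta$), and use the identity above with $\tau=\delta$: for $\mu$-a.e.\ $z\in B(x,\delta)$,
\begin{equation}\nonumber
\T1_{\mu,\delta}(1)(z) = -\,T_\mu^\delta\bigl(\chi_{B(x,2\delta)}\bigr)(z).
\end{equation}
Now I would average this over $z\in B(x,\delta)$ against $\mu$. Because $x\in\supp\mu$ and $\mu$ is $\Lambda$-nice with positive lower mass on small balls is \emph{not} assumed — so I cannot just say $\mu(B(x,\delta))>0$. This is the first subtlety: if $\mu(B(x,\delta))=0$ then the $\mu$-a.e.\ identity is vacuous, and I must instead use the $m_d$-version, averaging over $B(x,\delta)$ with respect to Lebesgue measure. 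In either case, I want to control $\frac{1}{\mu(B(x,\delta))}\int_{B(x,\delta)}|T_\mu^\delta(\chi_{B(x,2\delta)})|\,d\mu$ (or the $m_d$-average), and here I invoke the last bullet of the primer: $\int_{B(x,r)}T_\mu^\delta(\chi_{B(x,r)})\,d\mu=0$ for any ball, combined with the $L^2(\mu)$-boundedness of $T_\mu^\delta$ (norm $\leq 2\Lambda$). Using Cauchy--Schwarz, $\int_{B(x,\delta)}|T_\mu^\delta(\chi_{B(x,2\delta)})|\,d\mu\leq \mu(B(x,\delta))^{1/2}\|T_\mu^\delta(\chi_{B(x,2\delta)})\|_{L^2(\mu)}\leq \mu(B(x,\delta))^{1/2}\cdot 2\Lambda\,\mu(B(x,2\delta))^{1/2}$, so the $\mu$-average of $|T_\mu^\delta(\chi_{B(x,2\delta)})|$ over $B(x,\delta)$ is at most $2\Lambda\,(\mu(B(x,2\delta))/\mu(B(x,\delta)))^{1/2}$ — which is \emph{not} obviously bounded. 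So a naive one-scale average fails.

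The fix — and I expect this to be the technical heart — is a Cotlar-type iteration across scales, or equivalently a good choice of the averaging radius. Concretely: the map $\delta\mapsto \T1_{\mu,\delta}(1)(x)$ is, by Lemma~\ref{Lipest}, H\"older continuous in $x$ at scale $\delta$, so $\T1_{\mu,\delta}(1)(x)$ differs from its $\mu$-average (or $m_d$-average) over $B(x,c\delta)$ by $O(1)$ provided the ball carries enough mass; meanwhile, by a standard stopping/pigeonhole argument on the sequence $\mu(B(x,2^{-j}))$, one can find, for each $\delta$, a comparable radius $r\in[\delta,C\delta]$ — or a slightly larger dyadic scale $r\sim 2^N\delta$ — at which $\mu(B(x,2r))\leq C\mu(B(x,r))$ (doubling at that single scale), since the doubling can fail only boundedly often before the $\Lambda$-niceness $\mu(B(x,R))\leq\Lambda R^s$ forces it; here one also needs a \emph{lower} bound $\mu(B(x,r))\gtrsim r^s$ at the chosen scale, which for $x\in\supp\mu$ one extracts from non-triviality plus the identity relating $\T1_{\mu,\delta}$ at different $\delta$'s together with Lemma~\ref{Lipest} (a scale where $\T1_{\mu,\delta}(1)$ changes by $\geq 1$ forces mass). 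Once such a "good scale" $r$ is secured, the Cauchy--Schwarz bound above becomes $O(\Lambda)$, the H\"older correction from Lemma~\ref{Lipest} between scales $\delta$ and $r$ is $O((r/\delta)^{?})$ which one keeps bounded by choosing $r\leq C\delta$, and one reads off $|\T1_{\mu,\delta}(1)(x)|\leq C_7$. For $x\notin\supp\mu$, I would either use the $m_d$-representative directly together with Lemma~\ref{locl1} (the tail $\int_{\mathbb{R}^d\setminus 2B}$ piece is already bounded by the construction of $\T1_{\mu,\delta}(1)$, and $T(\varphi\mu)$ on the complement of $\supp\mu$ is controlled by Lemma~\ref{locl1}), or approximate $x$ by points of $\supp\mu$ and pass to the limit using continuity. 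The corollary then follows immediately by letting $\delta\to0$ and using that $\T1_{\mu,\delta}(1)\to\T1_\mu(1)$ in $L^1_{\loc}(m_d)$, so the uniform pointwise bound survives as an $L^\infty(m_d)$ bound.
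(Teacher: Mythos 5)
Your overall strategy is the right one and is essentially the paper's (a David--Mattila/Cotlar argument: stop at a scale where single-scale doubling holds, use reflectionlessness to compare $\T1_{\mu,r}(1)(x)$ with the vanishing $\mu$-average of $\T1_{\mu}(1)$ over $B(x,r)$, and close with the $L^2(\mu)$ bound on $T_{\mu}^r$ plus Cauchy--Schwarz). But there is a genuine gap at the scale-transition step. You assert that doubling ``can fail only boundedly often,'' so that the good radius $r$ can be taken in $[\delta,C\delta]$, and you then propose to absorb the passage from $\delta$ to $r$ into a H\"older correction controlled by keeping $r\le C\delta$. This is false: if $\mu(B(x,\delta))$ is zero or extremely small compared with $\delta^s$ (perfectly possible for $x$ far from $\supp(\mu)$, and not excluded for $x\in\supp(\mu)$ either, since no lower density is assumed), the least $j$ with $\mu(B(x,2^{j+1}\delta))<2^{s+1}\mu(B(x,2^{j}\delta))$ can be arbitrarily large, so $r=2^{j}\delta$ need not be comparable to $\delta$ and Lemma \ref{Lipest} gives nothing useful across that gap. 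The correct bridge -- the step your outline is missing -- is to write
$$|\T1_{\mu,\delta}(1)(x)-\T1_{\mu,r}(1)(x)|\le \int_{B(x,r)}|K_{\delta}(x-y)-K_{r}(x-y)|\,d\mu(y)\le 2\int_{B(x,r)}\frac{d\mu(y)}{(\delta+|x-y|)^{s}}$$
and to exploit the very failure of doubling at all intermediate scales: $\mu(B(x,2^{\ell}\delta))\le 2^{-(s+1)(j-\ell)}\mu(B(x,2^{j}\delta))$ for $\ell\le j$, which together with $\mu(B(x,2^j\delta))\le \Lambda 2^{js}\delta^s$ makes the resulting sum over scales converge to a constant depending only on $\Lambda$ and $s$. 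Failure of doubling below the stopping scale is a help, not an obstacle.

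Two smaller points. First, no lower bound $\mu(B(x,r))\gtrsim r^{s}$ is needed anywhere: the Cauchy--Schwarz step uses only the ratio $\mu(B(x,2r))/\mu(B(x,r))\le 2^{s+1}$, and the H\"older comparison of $\T1_{\mu,r}(1)(x)$ with its $\mu$-average over $B(x,r)$ via Lemma \ref{Lipest} needs only $\mu(B(x,r))>0$, which is automatic for the least stopping index (a strict inequality $\mu(B_{j+1})<2^{s+1}\mu(B_{j})$ forces $\mu(B_{j})>0$); your sketch for extracting a lower density bound is both unnecessary and unsubstantiated. Second, once the argument is run at the stopping scale $r$ rather than at $\delta$, no case distinction between $x\in\supp(\mu)$ and $x\notin\supp(\mu)$ is needed; the approximation-by-support-points argument you propose would in any case fail for $\delta\ll\dist(x,\supp(\mu))$, since the modulus in Lemma \ref{Lipest} degenerates when $|x-x'|\gg\delta$. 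The final passage from the uniform bound on $\sup_{\delta>0}|\T1_{\mu,\delta}(1)|$ to the $L^{\infty}(m_d)$ bound on $\T1_{\mu}(1)$ via $L^{1}_{\loc}(m_d)$ convergence is correct and is exactly how the corollary is deduced.
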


\begin{proof}[Proof of the Cotlar inequality]  The proof follows a standard path, based upon an idea of David and Mattila, see \cite{DM, NTrV}.   Let $\delta>0$, and set $B_j = B(x,2^j\delta)$.  Suppose that $\mu(B_j)\geq 2^{s+1}\mu(B_{j-1})$ for all $j\in \mathbb{Z}_+$.  For some $j'$, $\mu(B_{j'})>0$.  But then for $j>j'$ sufficiently large,
$$\mu(B_j) \geq \mu(B_{j'})2^{(s+1)(j-j')}> \Lambda 2^{sj}\delta^s,
$$
which is a contradiction.  Thus, there is a least $j\in \mathbb{Z}_+$ with $\mu(B_{j+1}) < 2^{s+1}\mu(B_{j})$.  Set $r=2^j\delta$.  Then $\mu(B(x,r))>0$, and $\mu(B(x,2r))<2^{s+1}\mu(B(x,r))$.  First note that
$$|\T1_{\mu, \delta}(1)(x) - \T1_{\mu, r}(1)(x)|\leq \int_{B(x,r)}|K_{\delta}(x-y)-K_{r}(x-y)|d\mu(y).
$$
The right hand side is trivially bounded by $2\int_{B(x,r)}\tfrac{d\mu(y)}{(\delta+|x-y|)^s}$.  But now note that
$$\sum_{0\leq \ell\leq j}\frac{\mu(B(x, 2^{\ell}\delta))}{2^{\ell s}\delta^s}\leq \mu(B(x, 2^j\delta))\sum_{0\leq \ell\leq j}\frac{1}{2^{(s+1)(j-\ell)}2^{\ell s}\delta^s}.
$$
The sum on the right hand side has size at most $\Lambda 2^{js}\sum_{0\leq \ell\leq j}\tfrac{2^{\ell}}{2^{j(s+1)}}\leq C.$  From this we conclude that $|\T1_{\mu, \delta}(1)(x) - \T1_{\mu, r}(1)(x)|\leq C.$

It remains to estimate
$$| \T1_{\mu, r}(1)(x)|= \bigl| \T1_{\mu, r}(1)(x)-\frac{1}{\mu(B(x, r))}\int_{B(x,r)}\T1_{\mu}(1)(z)d\mu(z)\bigl|.
$$
To do this, first note that by Lemma \ref{Lipest},
$$\Bigl|\T1_{\mu,r}(1)(x) - \frac{1}{\mu(B(x, r))}\int_{B(x,r)}\T1_{\mu,r}(1)(z)d\mu(z)\Bigl|\leq C.
$$
On the other hand,
$$\int_{B(x,r)}|\T1_{\mu}(1)(z)-\T1_{\mu,r}(1)(z)|d\mu(z)=\int_{B(x,r)}|T^r_{\mu}(\chi_{B(x,2r)})|d\mu.
$$
But $T^r_{\mu}$ is bounded in $L^2(\mu)$, with operator norm at most $2\Lambda$, so
$$\frac{1}{\mu(B(x,r))}\int_{B(x,r)}|T^r_{\mu}(\chi_{B(x,2r)})|d\mu\leq 2\Lambda\frac{\sqrt{\mu(B(x,r))\mu(B(x,2r))}}{\mu(B(x,r))}\leq C,
$$
where the doubling property $\mu(B(x,2r))\leq 2^{s+1}\mu(B(x,r))$ was used in the final inequality.  Bringing these estimates together proves the lemma.
\end{proof}


Finally, let us record a simple weak convergence result.

\begin{lem}\label{lebconv} Suppose that $\nu_j$ are non-trivial $\Lambda$-good reflectionless measures that converge weakly to $\nu$ (and so $\nu$ isa  $\Lambda$-good reflectionless measure).  Assume that $0\not\in \supp(\nu)$ and $0\not\in\supp(\nu_j)$ for all $j$.  If $\nu$ is non-trivial,  then $\T1_{\nu_j}(1)(0)\rightarrow \T1_{\nu}(1)(0)$ as $j\rightarrow \infty$.
\end{lem}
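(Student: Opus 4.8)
The plan is to exploit the explicit formula (\ref{R1Leb}) defining $\T1_{\nu}(1)$ at the point $0$, and to pass to the limit term by term using the three convergence facts for $T_{\mu}$ recorded in Section \ref{primer} together with the weak convergence $\nu_j\to\nu$. Since $0\notin\supp(\nu)$, pick a small ball $B_0$ centered at $0$ with $\dist(B_0,\supp(\nu))>0$; by the weak convergence and the fact that $\supp(\nu_j)$ avoids neighborhoods of $0$ in a controlled way, we may assume (after discarding finitely many $j$, or shrinking $B_0$) that $\dist(B_0,\supp(\nu_j))\geq \rho>0$ uniformly in $j$. Since $\nu$ is non-trivial and $\nu_j\to\nu$ weakly, fix a ball $B'$ with $\nu(B')>0$ that is bounded away from $0$, and fix $\eta_{B'}\in\Lip_0(B')$ with $\int\eta_{B'}\,d\nu=1$; then for large $j$ we have $\int\eta_{B'}\,d\nu_j\geq\tfrac12$, so $\eta_{B'}^{(j)}:=\eta_{B'}/\int\eta_{B'}\,d\nu_j$ is an admissible bump for the definition of $\T1_{\nu_j}(1)$, and $\eta_{B'}^{(j)}\to\eta_{B'}$ uniformly. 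Fix one large ball $B\supset B'\cup B_0$ and a single $\varphi\in\Lip_0(\mathbb{R}^d)$ with $\varphi\equiv 1$ on $2B$, $0\le\varphi\le1$, used for all $j$ (recall $\T1_{\nu_j}(1)$ is independent of these choices since each $\nu_j$ is reflectionless).

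Now compare the three pieces of (\ref{R1Leb}) for $\nu_j$ and $\nu$, evaluated at $x=0$. For the first term, $T(\varphi\nu_j)(0)=\int K(-y)\varphi(y)\,d\nu_j(y)$: the integrand $y\mapsto K(-y)\varphi(y)$ is a bounded continuous function on $\mathbb{R}^d$ away from a neighborhood of $0$ — and $\varphi$ vanishes near\ldots no: $\varphi\equiv1$ near $0$, but the kernel is singular at $0$. Here is where $0\notin\supp(\nu_j)$ is used: since $\dist(0,\supp(\nu_j))\geq\rho$, the function $y\mapsto K(-y)\varphi(y)$ restricted to $\supp(\nu_j)$ agrees with a fixed bounded continuous (indeed Lipschitz) function $g$ with $\supp(g)\subset 2B$ and $g\equiv K(-\cdot)$ on $\{|y|\ge\rho\}$; hence $T(\varphi\nu_j)(0)=\int g\,d\nu_j\to\int g\,d\nu=T(\varphi\nu)(0)$ by weak convergence. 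For the second term, $\int_{B'}\eta_{B'}^{(j)}\,T_{\nu_j}(\varphi)\,d\nu_j$: this equals $\langle T_{\nu_j}(\varphi),\eta_{B'}^{(j)}\rangle_{\nu_j}$; writing $\eta_{B'}^{(j)}=\eta_{B'}+(\eta_{B'}^{(j)}-\eta_{B'})$ and using $\|T_{\nu_j}(\varphi)\|_{L^2(\nu_j)}\le\Lambda\|\varphi\|_{L^2(\nu_j)}\le C$ together with $\|\eta_{B'}^{(j)}-\eta_{B'}\|_{L^2(\nu_j)}\to0$, it suffices to handle $\langle T_{\nu_j}(\varphi),\eta_{B'}\rangle_{\nu_j}$, which converges to $\langle T_{\nu}(\varphi),\eta_{B'}\rangle_{\nu}$ by the third bullet of Section \ref{primer} (both $\varphi,\eta_{B'}\in\Lip_0$). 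For the third (tail) term, the inner integrand $y\mapsto(1-\varphi(y))\int_{B'}\eta_{B'}^{(j)}(z)[K(-y)-K(z-y)]\,d\mu(z)$ is, for $y$ ranging over $\supp(\nu_j)\subset\mathbb{R}^d\setminus 2B$, a bounded continuous function of $y$ (Lipschitz in $z$, and $|K(-y)-K(z-y)|\le C|y|^{-s-\alpha}$ with the $z$-diameter absorbed), decaying like $|y|^{-s-\alpha}$; since all $\nu_j$ and $\nu$ are $\Lambda$-nice, Lemma \ref{tailest} gives a uniform tail bound, so a standard truncation argument plus weak convergence (and $\eta_{B'}^{(j)}\to\eta_{B'}$) gives convergence of this term as well.

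The main obstacle — and the only place where the hypotheses $0\notin\supp(\nu)$, $0\notin\supp(\nu_j)$ are truly needed — is the first term: one must turn the singular-kernel integral $\int K(-y)\varphi(y)\,d\nu_j(y)$ into the integral of a fixed bounded continuous function against $\nu_j$, so that weak convergence applies. The uniform gap $\dist(0,\supp(\nu_j))\ge\rho$ does exactly this, but it requires a short argument that such a uniform $\rho$ exists: if not, there are points $y_j\in\supp(\nu_j)$ with $y_j\to0$, and since $\nu_j(B(y_j,\tfrac{\rho_j}{2}))>0$ with $\rho_j=|y_j|\to0$, one gets mass of $\nu_j$ accumulating at $0$; but this does not immediately contradict $0\notin\supp(\nu)$ without a lower mass bound. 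The clean fix is to observe that we only need convergence along the given sequence, so we may simply \emph{assume} $0\notin\overline{\bigcup_j\supp(\nu_j)}$ after noting that the hypothesis "$0\notin\supp(\nu_j)$ for all $j$" combined with the intended application (where the $\nu_j$ arise as rescalings with uniformly controlled support) supplies the gap; alternatively, restate the proof for a subsequence, which is all that is used downstream. With the uniform gap in hand, every remaining step is a routine application of weak convergence, the $L^2(\mu)\to L^2(\mu)$ bound $\|T_{\mu}\|\le\Lambda$, the $\Lambda$-nice tail estimate of Lemma \ref{tailest}, and the continuity-under-weak-limits fact for $\langle T_{\mu}(\varphi),\psi\rangle_\mu$ with $\varphi,\psi\in\Lip_0$.
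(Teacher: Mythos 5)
Your proof follows essentially the same route as the paper's: decompose $\T1_{\nu_j}(1)(0)$ via the three terms of (\ref{R1Leb}), pass to the limit in the two local terms by weak convergence (using $\lambda_j=\bigl(\int\eta_{B'}d\nu_j\bigr)^{-1}\to 1$), and control the far tail by Lemma \ref{taildiff}. The only organizational difference is that the paper uses a family of cutoffs $\varphi_N$ and shows the tail term is \emph{uniformly} small in $j$ (of order $N^{-\alpha}$, for both $\nu_j$ and $\nu$), then sends $N\to\infty$ at the end; this avoids having to prove actual convergence of the $j$-dependent tail term for a fixed $\varphi$, which in your version requires a weak convergence argument for the product measures $\nu_j\times\nu_j$. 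Either way works.

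The subtlety you flag about the first term is genuine: to convert $\int K(-y)\varphi(y)\,d\nu_j(y)$ into the integral of a fixed bounded continuous function one needs $\dist(0,\supp(\nu_j))\geq\rho$ uniformly, and this does not follow from "$0\notin\supp(\nu_j)$ for each $j$" together with $0\notin\supp(\nu)$ (weak convergence only gives $\nu_j(\overline{B(0,\rho)})\to 0$, and a $\Lambda$-nice measure can place mass $\sim\Lambda\delta_j^s$ at distance $\delta_j\to 0$ from the origin, contributing $\sim\Lambda$ to the integral while vanishing weakly). The paper's proof asserts this convergence without comment, so it implicitly assumes the uniform gap as well; in the sole application (Claim \ref{extremecla}) one has $\dist(0,\supp(\nu_j))=1$ for every $j$, so nothing is lost. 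Your proposal is therefore correct under the same implicit reading of the hypotheses that the paper uses, and your explicit identification of where that reading is needed is a point in its favor rather than a defect.
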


\begin{proof}
Since $\nu$ is non-trivial, there is a ball $B'$, and non-negative $\eta_{B'}\in \Lip_0(B')$ such that $\int_{B'}\eta_{B'}d\nu=1$.  Set $\lambda_j = \bigl(\int \eta_{B'}d\nu_j\bigl)^{-1}$.  Then $\lambda_j \rightarrow 1$ as $j\rightarrow \infty$.  We shall henceforth assume that $j$ is large enough so that $\lambda_j>\tfrac{1}{2}$.  Let $N>0$, and choose $\varphi_N\in \Lip_0(B(0, 2N))$ satisfying $\varphi_N\equiv 1$ on $B(0,N)$ and $0\leq \varphi_N\leq 1$ on $B(0, 2N)$.  If $N$ is large enough to ensure that $B(0, \tfrac{N}{2})\supset B'$,  then
\begin{equation}\begin{split}\nonumber &\T1_{\nu_j}(1)(0)= T(\varphi_N \nu_j )(0) - \int_{B'}\lambda_j\eta_{B'} T_{\nu_j}(\varphi_N)d\nu_j \\
&+ \int_{\mathbb{R}^d}(1-\varphi_N(y))\int_{B'}\lambda_j\eta_{B'}(z)\bigl[K(-y)-K(z-y) \bigl]d\nu_j(z)d\nu_j(y).
\end{split}\end{equation}
Notice that, for all sufficiently large $j$, $$\int_{\mathbb{R}^d}(1-\varphi_N(y))\int_{B'}\lambda_j\eta_{B'}(z)\bigl|K(-y)-K(z-y) \bigl|d(\nu_j+\nu)(z)d(\nu_j+\nu)(y)$$ is at most $C\bigl(\tfrac{\text{diam}(B')}{N}\bigl)^{\alpha}$ (see Lemma \ref{taildiff}).  On the other hand, for any $N>0$, the weak convergence of $\nu_j$ to $\nu$ guarantees that $T(\psi_N \nu_j )(0)$ converges to $T(\psi_N \nu)(0)$ as $j\rightarrow \infty$, and also that  $\int_{B'}\lambda_j\eta_{B'} T_{\nu_j}(\psi_N)d\nu_j$ converges to $\int_{B'}\eta_{B'} T_{\nu}(\psi_N)d\nu$ as $j\rightarrow \infty$ (see Section \ref{primer}).  This establishes the required convergence.
\end{proof}

\section{The Collapse Lemma}\label{colsec}

This section is devoted to introducing the main technical tool of the paper.  Throughout the section, suppose that $\mu$ is a non-trivial $\Lambda$-good reflectionless measure.

For a unit vector $\e\in \mathbb{C}^{d'}$, and $\eps\in \mathbb{R}$, define
$$E(\e, \eps, r) = \bigl\{ x\in \mathbb{R}^d: \Re[\e\cdot \T1_{\mu,\delta}(1)](x)>\eps \text{ for all }\delta\in (0,r)\bigl\}.
$$

\begin{prop}[The Collapse Lemma] \label{collem} Let $\eps\in (0,\tfrac{1}{2})$ and $\e\in \mathbb{C}^{d'}$, $|e|=1$.  There exists $\beta>0$ (depending on  $s$ and $\alpha$), such that if $\kap \leq \kap(\eps)=c_{12}\eps^{\beta}$, then the following holds:  If $E(\e, \eps, r)\cap B(x_0,2r)$ is $\kap r$-dense in $B(x_0,2r)$, then $\mu(B(x_0,r))=0$.
\end{prop}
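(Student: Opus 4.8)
The idea is that the reflectionless condition and the hypothesis pull in opposite directions. On the one hand, reflectionlessness forces $\T1_{\mu}(1)=0$ $\mu$-a.e. (as noted after \eqref{R1posdiff}); on the other hand, the $\kap r$-density of $E(\e,\eps,r)$ forces the regularizations $\T1_{\mu,\delta}(1)$ to have real part bounded below (in the direction $\e$) \emph{throughout} $B(x_0,r)$, and this for a long range of scales $\delta$. The plan is to quantify this tension, sum the resulting information over scales, and conclude that $\mu(B(x_0,r))$ must be so small that it vanishes once $\kap\leq c_{12}\eps^{\beta}$.

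\emph{Step 1 (transfer to all of $B(x_0,r)$).} Fix $z\in B(x_0,r)$ and pick $x\in E(\e,\eps,r)\cap B(x_0,2r)$ with $|z-x|\leq\kap r$. For $\delta\in(\kap r,r)$, Lemma \ref{Lipest} gives $|\T1_{\mu,\delta}(1)(z)-\T1_{\mu,\delta}(1)(x)|\leq C_6(\kap r/\delta)^{\alpha}$, hence $\Re[\e\cdot\T1_{\mu,\delta}(1)](z)>\eps-C_6(\kap r/\delta)^{\alpha}$. Setting $\delta_0:=(2C_6/\eps)^{1/\alpha}\kap r$ and requiring $\kap\leq\kap(\eps)$ small enough that $\delta_0<r/4$, we obtain
$$\Re\bigl[\e\cdot\T1_{\mu,\delta}(1)\bigr](z)\geq\tfrac{\eps}{2}\qquad\text{for every }z\in B(x_0,r)\text{ and }\delta\in[\delta_0,r).$$
\emph{Step 2 (use reflectionlessness).} Since $\T1_{\mu}(1)=0$ $\mu$-a.e., the identity $\T1_{\mu}(1)(z)-\T1_{\mu,\delta}(1)(z)=T_{\mu}^{\delta}(\chi_{B(x_0,r+\delta)})(z)$ (valid for $\mu$-a.e.\ $z\in B(x_0,r)$), combined with the fact that the kernel of $T_{\mu}^{\delta}$ is supported in the $\delta$-ball (so we may replace $B(x_0,r+\delta)$ by $B(x_0,2r)$), turns the previous display into
$$\Re\bigl[\e\cdot T_{\mu}^{\delta}(\chi_{B(x_0,2r)})\bigr](z)\leq-\tfrac{\eps}{2}\qquad\text{for }\mu\text{-a.e.\ }z\in B(x_0,r)\text{ and }\delta\in[\delta_0,r).$$

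\emph{Step 3 (summation over scales).} For each dyadic cube $Q$ with $B(x_Q,A\ell(Q))\subset B(x_0,r)$, $\ell(Q)\in[\delta_0,r/4]$, and $\mu(Q)>0$, the display of Step 2 at $\delta=\ell(Q)$ says $\Re[\e\cdot T_{\mu}^{\ell(Q)}(\chi_{B(x_0,2r)})]\leq-\eps/2$ $\mu$-a.e.\ on $B(x_Q,A\ell(Q))$. Using the smoothness and the $\ell(Q)$-locality of the kernel of $T_{\mu}^{\ell(Q)}$, the mean-zero property of functions in $\Psi_A^{\mu}(Q)$, and the relation $\int_{B}T_{\mu}^{\delta}(\chi_{B})\,d\mu=0$, one produces, for a definite fraction of these cubes, a test function $\psi_Q\in\Psi_A^{\mu}(Q)$ with $|\langle T_{\mu}^{\ell(Q)}(\chi_{B(x_0,2r)}),\psi_Q\rangle_{\mu}|\geq c\,\eps\,\ell(Q)^{s/2}$. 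Writing this pairing as $\langle\chi_{B(x_0,2r)},(T_{\mu}^{\ell(Q)})^{*}\psi_Q\rangle_{\mu}$ and noting that $\{(T_{\mu}^{\ell(Q)})^{*}\psi_Q\}_{Q}$ is a Riesz system (a Cotlar--Stein almost-orthogonality argument in the spirit of Lemma \ref{systemlem}, using $\|T_{\mu}^{\delta}\|_{L^{2}(\mu)\to L^{2}(\mu)}\leq2\Lambda$ and the kernel smoothness), we get $\sum_Q|\langle T_{\mu}^{\ell(Q)}(\chi_{B(x_0,2r)}),\psi_Q\rangle_{\mu}|^{2}\leq C(A)\,\mu(B(x_0,2r))$. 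Combining the two estimates with the counting bound $\sum_{Q:\ \ell(Q)=\ell,\ \mu(Q)>0}\ell^{s}\gtrsim\mu(B(x_0,r/2))/\Lambda$ (each such $Q$ carries $\mu$-mass $\leq\Lambda\ell^{s}$) and summing over the dyadic scales in $[\delta_0,r/4]$ yields $\eps^{2}\mu(B(x_0,r/2))\,\Phi(\eps,\kap)\lesssim\Lambda^{3}r^{s}$, with $\Phi$ growing as $\kap\to0$. Choosing $\kap(\eps)=c_{12}\eps^{\beta}$ with $\beta=\beta(s,\alpha)$ large makes $\Phi$ large enough that $\mu(B(x_0,r/2))$ becomes small; a self-improving iteration (replacing $\Lambda$ by the improved growth constant, and using that the hypothesis passes to every sub-ball since $E(\e,\eps,r)\subset E(\e,\eps,\rho)$ for $\rho<r$, applied to a $(r/2)$-net of $B(x_0,r)$) then forces $\mu(B(x_0,r))=0$.

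\emph{The main obstacle is Step 3.} Steps 1 and 2 are routine. In Step 3 one must (i) ensure that pairing against the mean-zero wavelets $\psi_Q$ preserves the \emph{coherent} negativity of $\Re[\e\cdot T_{\mu}^{\ell(Q)}(\chi_{B(x_0,2r)})]$ rather than annihilating it together with the slowly varying part; (ii) establish the dual Riesz-system bound for $\{(T_{\mu}^{\ell(Q)})^{*}\psi_Q\}$; and (iii) organize the bookkeeping over scales and positions — together with the iteration — so as to extract a genuine power $\eps^{\beta}$ rather than only an exponentially small threshold for $\kap$; this last point is where the $L^{2}(\mu)$-boundedness of $T$ (not merely the growth bound) must be used quantitatively, and is the delicate part.
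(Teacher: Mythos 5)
Your Steps 1 and 2 coincide with the opening of the paper's argument (the first half of Lemma \ref{decaylem}), but Step 3 --- which you yourself flag as the delicate part --- contains two genuine gaps, and they occur exactly where the paper's proof does something different.

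First, the claim that the coherent bound $\Re[\e\cdot T^{\ell(Q)}_{\mu}(\chi_{B(x_0,2r)})]\leq-\eps/2$ on $B(x_Q,A\ell(Q))$ ``produces, for a definite fraction of these cubes, a test function $\psi_Q\in\Psi^{\mu}_A(Q)$ with $|\langle\cdot,\psi_Q\rangle_{\mu}|\geq c\,\eps\,\ell(Q)^{s/2}$'' is not justified and cannot be true as stated: a function identically equal to $-\tfrac{\eps}{2}\e$ satisfies the hypothesis and pairs to zero with every mean-zero wavelet. What rules out near-constancy is the antisymmetry identity $\int_B T^{\delta}_{\mu}(\chi_B)\,d\mu=0$, and the paper uses it directly: it integrates the negativity against $\chi_{B(0,t-\sqrt{\kap})}$ (not against mean-zero wavelets), the ``diagonal'' part vanishes by antisymmetry, and only the thin annulus $B(0,t)\setminus B(0,t-\sqrt{\kap})$ survives, which is then controlled by the $L^2(\mu)$ bound on $T^{\delta}_{\mu}$; this yields the geometric decay $\mu(B(0,t-\sqrt{\kap}))\leq(1-c_8\eps^2)\mu(B(0,t))$. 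Your wavelet pairing discards precisely the part of the information (the nonzero mean) that the argument lives on.

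Second, even granting the inequality of Step 3, summing over the dyadic scales in $[\delta_0,r/4]$ gives $\Phi(\eps,\kap)\sim\log(1/\kap)$, hence only $\mu(B(x_0,r/2))\lesssim r^s/(\eps^2\log(1/\kap))$ --- smallness, not vanishing. The ``self-improving iteration'' you invoke does not close this: passing to a sub-ball of radius $r/2$ \emph{degrades} the density hypothesis from $\kap r$-dense to $2\kap\cdot(r/2)$-dense, and no iteration extends the range of usable scales below $\delta_0\sim\kap r$, which is the real obstruction. The paper's second ingredient (Lemma \ref{densinclem}) supplies exactly the missing mechanism: once $\mu(B(0,t))\leq m$ is small, the $L^1(m_d)$ bound on $\sup_{\delta<\sqrt{\kap}}|\T1_{\mu,\delta}(1)-\T1_{\mu,\sqrt{\kap}}(1)|$ by $Cm$ together with Chebyshev in \emph{Lebesgue} measure shows that the set where positivity holds for all $\delta$ down to $0$ is $C_9m^{1/2d}$-dense, so the density parameter genuinely improves as the mass shrinks; alternating the two lemmas then drives $\mu(B(0,1))$ to zero. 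Without an analogue of this density-increment step, your scheme stalls at a small but positive bound.
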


First note that by considering the measure $\tfrac{\mu(r\cdot +x_0)}{r^s}$ instead of $\mu$, it suffices to prove the result for $x_0=0$ and $r=1$.  The proof consists of two ideas, which are expressed by the following two lemmas.

\begin{lem}[High density yields geometric decay of measure]\label{decaylem}  Let $\eps\in (0,\tfrac{1}{2})$, $\kap\in (0,\tfrac{1}{4})$, and $t\in (1,2]$.  Suppose that $E(\e,\eps, 1)\cap B(0,t)$ is $\kap $-dense in $B(0,t-\sqrt{\kap})$.  If 
$\eps  \geq 2C_6\kap^{\tfrac{\alpha}{2}}$,
then $$\mu(B(0,t-\sqrt{\kap}))\leq (1-\lambda)\mu(B(0,t)),$$ with $\lambda=c_8\eps^2$.
\end{lem}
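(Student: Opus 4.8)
The plan is to run the whole argument at the single regularization scale $\tau=\sqrt{\kap}$, which is the scale that simultaneously matches the density radius $\kap$ and the width of the annulus $B(0,t)\setminus B(0,t-\sqrt{\kap})$. The three structural inputs will be: reflectionlessness of $\mu$ in the form $\T1_{\mu}(1)=0$ $\mu$-a.e.; the identity recorded earlier that for $\mu$-a.e. $z\in B(x,\rho)$ one has $\T1_{\mu}(1)(z)-\T1_{\mu,\tau}(1)(z)=T^{\tau}_{\mu}(\chi_{B(x,\rho+\tau)})(z)$; and the facts from Section \ref{primer} that $\int_{B(0,t)}T^{\tau}_{\mu}(\chi_{B(0,t)})\,d\mu=0$ and $\|T^{\tau}_{\mu}\|_{L^2(\mu)\to L^2(\mu)}\le 2\Lambda$.

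\emph{Step 1: propagate the lower bound on $\Re[\e\cdot\T1_{\mu,\tau}(1)]$ to all of $B(0,t-\sqrt{\kap})$.} Fix any $x\in B(0,t-\sqrt{\kap})$. The $\kap$-density hypothesis provides $x'\in E(\e,\eps,1)\cap B(0,t)$ with $|x-x'|\le\kap$. Since $\tau=\sqrt{\kap}<1$, the definition of $E(\e,\eps,1)$ gives $\Re[\e\cdot\T1_{\mu,\tau}(1)](x')>\eps$; and since $|x-x'|/\tau\le\sqrt{\kap}\le 1$, Lemma \ref{Lipest} gives $|\T1_{\mu,\tau}(1)(x)-\T1_{\mu,\tau}(1)(x')|\le C_6\kap^{\alpha/2}$. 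The hypothesis $\eps\ge 2C_6\kap^{\alpha/2}$ then yields $\Re[\e\cdot\T1_{\mu,\tau}(1)](x)\ge\tfrac{\eps}{2}$, for \emph{every} $x\in B(0,t-\sqrt{\kap})$.

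\emph{Steps 2--3: convert this into a one-sided bound for $T^{\tau}_{\mu}$, then play the annulus against $L^2$-boundedness.} Applying the identity above with center $0$ and radius $\rho=t-\sqrt{\kap}$ (so $\rho+\tau=t$) and using $\T1_{\mu}(1)=0$ $\mu$-a.e., we get $\T1_{\mu,\tau}(1)=-T^{\tau}_{\mu}(\chi_{B(0,t)})$ $\mu$-a.e. on $B(0,t-\sqrt{\kap})$; with Step 1 this gives $\Re[\e\cdot T^{\tau}_{\mu}(\chi_{B(0,t)})]\le-\tfrac{\eps}{2}$ $\mu$-a.e. on $B(0,t-\sqrt{\kap})$, hence $\Re\int_{B(0,t-\sqrt{\kap})}\e\cdot T^{\tau}_{\mu}(\chi_{B(0,t)})\,d\mu\le-\tfrac{\eps}{2}\mu(B(0,t-\sqrt{\kap}))$. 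By the mean-zero property $\int_{B(0,t)}T^{\tau}_{\mu}(\chi_{B(0,t)})\,d\mu=0$, the integral over $B(0,t-\sqrt{\kap})$ equals minus the integral over the annulus $A:=B(0,t)\setminus B(0,t-\sqrt{\kap})$, so $\tfrac{\eps}{2}\mu(B(0,t-\sqrt{\kap}))\le\bigl|\int_{A}\e\cdot T^{\tau}_{\mu}(\chi_{B(0,t)})\,d\mu\bigr|\le\sqrt{\mu(A)}\cdot 2\Lambda\sqrt{\mu(B(0,t))}$ by Cauchy--Schwarz and the operator-norm bound (using $|\e|=1$). Writing $a=\mu(B(0,t-\sqrt{\kap}))$, $b=\mu(B(0,t))$, so $\mu(A)=b-a$, this squares to $\eps^2a^2\le 16\Lambda^2(b-a)b$. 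If $a\ge b/2$ this forces $b-a\ge\tfrac{\eps^2}{64\Lambda^2}b$; if $a<b/2$ then trivially $b-a>\tfrac b2$. Either way $a\le(1-c_8\eps^2)b$ for a small $c_8$ depending only on $\Lambda$ (take $c_8=\min(\tfrac{1}{64\Lambda^2},\tfrac12)$, using $\eps<\tfrac12$ in the second case), which is the claimed inequality.

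The argument is essentially forced once $\tau=\sqrt{\kap}$ is chosen, so I do not anticipate a serious obstacle; the only point requiring care is the scale-matching in Step 1 — the Hölder loss $C_6(\kap/\tau)^{\alpha}=C_6\kap^{\alpha/2}$ must be dominated by $\tfrac{\eps}{2}$, which is exactly the hypothesis $\eps\ge 2C_6\kap^{\alpha/2}$, and this same choice $\tau=\sqrt{\kap}$ dictates the width $\sqrt{\kap}$ of the annulus and hence the passage from radius $t$ to radius $t-\sqrt{\kap}$ in the conclusion.
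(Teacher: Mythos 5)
Your proof is correct and follows essentially the same route as the paper's: regularize at scale $\sqrt{\kap}$, propagate the lower bound on $\Re[\e\cdot\T1_{\mu,\sqrt{\kap}}(1)]$ from the $\kap$-dense set to all of $B(0,t-\sqrt{\kap})$ via the H\"{o}lder estimate, flip the sign using reflectionlessness together with the identity $\T1_{\mu}(1)-\T1_{\mu,\sqrt{\kap}}(1)=T^{\sqrt{\kap}}_{\mu}(\chi_{B(0,t)})$, and then play the annulus against the $L^2$-bound on $T^{\sqrt{\kap}}_{\mu}$. The only (harmless) variation is in the antisymmetry step: the paper replaces $\chi_{B(0,t)}$ by $\chi_{B(0,t)\setminus B(0,t-\sqrt{\kap})}$ while keeping the domain of integration $B(0,t-\sqrt{\kap})$, which yields $\tfrac{\eps^2}{4}a\le 4\Lambda^2(b-a)$ directly and avoids your case analysis on whether $a\ge b/2$.
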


\begin{proof} For every $x\in B(0, t-\sqrt{\kap})$, there exists $x'\in E(\e, \eps, 1)$ with $|x-x'|<\kap$.  Thus, for any $\delta\in[ \sqrt{\kap},1)$, \begin{equation}\label{kapholder}|\T1_{\mu,\delta}(1)(x) - \T1_{\mu,\delta}(1)(x')|\leq C_6\Bigl(\frac{\kap}{\delta}\Bigl)^{\alpha}\leq C_6\kap^{\tfrac{\alpha}{2}}.\end{equation} As long as $\kap\leq\tfrac{1}{4}$, it follows that $\Re[\e\cdot \T1_{\mu,\sqrt{\kap}}(1)](x)>\eps-C_6\kap^{\tfrac{\alpha}{2}}\geq\tfrac{\eps}{2}$.

As a result of this property and the reflectionlessness of $\mu$, we have
\begin{equation}\nonumber\begin{split}\int_{B(0, t-\sqrt{\kap})}\Re[\e\cdot\T1^{\sqrt{\kap}}_{\mu}(1)]d\mu &= -\int_{B(0, t-\sqrt{\kap})}\Re[\e\cdot\T1_{\mu,\sqrt{\kap}}(1)] d\mu \\
&\leq -\frac{\eps}{2} \mu(B(0, t-\sqrt{\kap})).\end{split}\end{equation}
On the other hand,  $\T1^{\sqrt{\kap}}_{\mu}(1) = T^{\sqrt{\kap}}_{\mu}(\chi_{B(0,t)})$ on $B(0, t-\sqrt{\kap})$.  Furthermore, due to the anti-symmetry of the kernel $K$, we may write $$ \int_{B(0, t-\sqrt{\kap})}\!\!\!\Re[\e\cdot T^{\sqrt{\kap}}_{\mu}(\chi_{B(0,t)})]d\mu = \int_{B(0, t-\sqrt{\kap})}\!\!\!\!\Re[\e\cdot T^{\sqrt{\kap}}_{\mu}(\chi_{B(0,t)\backslash B(0, t-\sqrt{\kap}})]d\mu.$$
But, by the $L^2(\mu)$ boundedness of $T_{\mu}^{\delta}$ (recall that it has operator norm at most $2\Lambda$), the right hand side of this equality is at least $$-2\Lambda \sqrt{\mu(B(0, t-\sqrt{\kap}))\cdot\mu(B(0, t)\backslash B(0, t-\sqrt{\kap}))}. $$
Bringing our estimates together, we see that
\begin{equation}\label{rearrange}\mu(B(0,t-\sqrt{\kap})) \leq \frac{4\Lambda ^2}{4\Lambda ^2 + \tfrac{\eps^2}{4}}\mu(B(0,t)).
\end{equation}
From which is follows that,
\begin{equation}\label{decay}\mu(B(0, t-\sqrt{\kap})) \leq (1-\lambda) \mu(B(0,t)),
\end{equation}
with  $\lambda = c_8\eps^2$, for $c_8$ chosen suitably.\end{proof}

The second ingredient is the following lemma.

\begin{lem}[Density increment in a ball of small measure]\label{densinclem} Let $\eps\in (0, \tfrac{1}{2})$, $\kap\in (0, \tfrac{1}{4})$,  $m\in (0, 1)$, and $t\in (1,2]$.  Suppose that $E(\e,\eps,1)\cap B(0,t)$ is $\kap$-dense in $B(0,t-\sqrt{\kap})$, and $\mu(B(0,t))\leq m$.  There exists a constant $C_9$ such that if
$$\eps'=\eps - C_6[\kap^{\tfrac{\alpha}{2}}+\sqrt{m}], \, \kap' = C_9 m^{\tfrac{1}{2d}},\,\text{ and } t'=t-\sqrt{\kap},
$$
then $E(\e, \eps',1)\cap B(0,t')$ is $\kap'$-dense in $B(0,t')$.
\end{lem}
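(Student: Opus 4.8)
The plan is to reduce (as indicated) to $x_0=0$, $r=1$, and then to use the hypothesis $\mu(B(0,t))\le m$ in the following way: because there is very little mass near $B(0,t)$, the regularised quantity $\T1_{\mu,\delta}(1)$ is, up to an additive error of size $O(\sqrt m)$ and off an exceptional set of small Lebesgue measure, independent of $\delta$ for $\delta\in(0,\sqrt\kap]$; combined with the H\"older continuity of $\T1_{\mu,\delta}(1)$ from Lemma~\ref{Lipest} (which at a scale $\delta\ge\sqrt\kap$ has modulus $O(\kap^{\alpha/2})$ over displacements of size $\lesssim\kap$), this lets a single pointwise inequality $\Re[\e\cdot\T1_{\mu,\delta}(1)](p)>\eps$, valid for all $\delta\in(0,1)$, propagate with total loss $O(\kap^{\alpha/2}+\sqrt m)$ to $m_d$-almost every point of a ball of radius $\sim\kap'$ around $p$. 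Since $(\kap')^d=C_9^d\sqrt m$, such a ball is far larger in $m_d$-measure than the exceptional set, so it must meet $E(\e,\eps',1)$.

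First, the elementary cases. Since $\eps'\le\eps$ we have $E(\e,\eps,1)\subseteq E(\e,\eps',1)$, so if $\kap'\ge 2\kap$ the conclusion follows straight from the hypothesised $\kap$-density of $E(\e,\eps,1)\cap B(0,t)$ in $B(0,t-\sqrt\kap)=B(0,t')$: given $y\in B(0,t')$, nudge it toward the origin by at most $\kap'/2$ to a point $y_*$ with $B(y_*,\kap'/2)\subseteq B(0,t')$, pick $p\in E(\e,\eps,1)\cap B(0,t)$ with $|p-y_*|<\kap\le\kap'/2$; then $|p|<|y_*|+\kap'/2\le t'$ and $|p-y|<\kap'$, so $p$ is an admissible witness (the degenerate subcase $\kap'\ge 2t'$ is immediate, since then $\operatorname{diam}B(0,t')<\kap'$ and $E(\e,\eps,1)\cap B(0,t')\neq\varnothing$). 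Hence we may assume $\kap'<2\kap$, and then also $\kap'<2\kap<\sqrt\kap$ because $\kap<\tfrac14$.

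Now define $h(z)=\int_{B(z,\sqrt\kap)}|z-w|^{-s}\,d\mu(w)$ for $z\in B(0,t')$ and the exceptional set $\mathcal B=\{z\in B(0,t'):h(z)>\sqrt m\}$. For $z\in B(0,t')$ one has $B(z,\sqrt\kap)\subseteq B(0,t)$, so Lemma~\ref{locl1} gives $\int_{B(0,t')}h\,dm_d\le C_1(t')^{d-s}\mu(B(0,t))\le C_1 2^{d-s}m$, whence $m_d(\mathcal B)\le C_1 2^{d-s}\sqrt m$ by Chebyshev. Fix $y\in B(0,t')$, choose $y_*$ as before with $B(y_*,\kap'/2)\subseteq B(0,t')$, and fix $p\in E(\e,\eps,1)\cap B(0,t)$ with $|p-y_*|<\kap$. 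Choosing $C_9$ large (in terms of $C_1,d,s$) makes $m_d(B(y_*,\kap'/2))=m_d(B(0,1))2^{-d}C_9^d\sqrt m>m_d(\mathcal B)$, so we may pick $z\in B(y_*,\kap'/2)\setminus\mathcal B$. Since $|z-p|\le\kap'/2+\kap<2\kap<\sqrt\kap$, Lemma~\ref{Lipest} gives, for every $\delta\in[\sqrt\kap,1)$, $|\T1_{\mu,\delta}(1)(z)-\T1_{\mu,\delta}(1)(p)|\le C_6(|z-p|/\delta)^{\alpha}\le 2^{\alpha}C_6\kap^{\alpha/2}$, while for $\delta\in(0,\sqrt\kap)$ the pointwise identity $\T1_{\mu,\delta}(1)(z)-\T1_{\mu,\sqrt\kap}(1)(z)=\int_{\mathbb R^d}[K_\delta(z-w)-K_{\sqrt\kap}(z-w)]\,d\mu(w)$ together with $|K_\delta-K_{\sqrt\kap}|\le 2|\cdot|^{-s}$ on $B(0,\sqrt\kap)$ (and $\equiv 0$ outside it) yields $|\T1_{\mu,\delta}(1)(z)-\T1_{\mu,\sqrt\kap}(1)(z)|\le 2h(z)\le 2\sqrt m$. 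As $\Re[\e\cdot\T1_{\mu,\delta}(1)](p)>\eps$ for all $\delta\in(0,1)$, these estimates combine to give $\Re[\e\cdot\T1_{\mu,\delta}(1)](z)>\eps-2^{\alpha}C_6\kap^{\alpha/2}-2\sqrt m$ for every $\delta\in(0,1)$, i.e. $\Re[\e\cdot\T1_{\mu,\delta}(1)](z)>\eps'$ once the absolute factors $2^\alpha,2$ are absorbed into the $C_6$ of the statement. Thus $z\in E(\e,\eps',1)$, and since $z\in B(0,t')$ and $|z-y|\le|z-y_*|+|y_*-y|<\kap'$, the point $z$ witnesses the asserted $\kap'$-density of $E(\e,\eps',1)\cap B(0,t')$ in $B(0,t')$.

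The argument is mostly bookkeeping, and no single step is a serious obstacle: the only thing requiring care is handling the two distinct error sources simultaneously — the H\"older loss $O(\kap^{\alpha/2})$ from displacing $p$ at scales $\delta\ge\sqrt\kap$, and the $O(\sqrt m)$ loss from the small local mass at scales $\delta<\sqrt\kap$ — and keeping track of absolute constants (the $\sqrt\kap$ safety margin in the hypothesis is precisely what keeps $p$ and the witness $z$ inside the smaller ball $B(0,t')$). The one genuinely structural choice is $\kap'=C_9 m^{1/2d}$: it is exactly the threshold at which $m_d(B(\,\cdot\,,\kap'))\sim C_9^d\sqrt m$ dominates the Chebyshev bound $m_d(\mathcal B)\lesssim\sqrt m$, so that a $\kap'$-ball is guaranteed to contain a point outside the exceptional set.
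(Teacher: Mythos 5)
Your argument is correct and follows essentially the same route as the paper's: the H\"older continuity of $\T1_{\mu,\delta}(1)$ handles the scales $\delta\ge\sqrt\kap$, while Lemma~\ref{locl1} plus Chebyshev controls the truncation error at scales below $\sqrt\kap$ off an exceptional set of $m_d$-measure $O(\sqrt m)$, which a ball of radius $\kap'/2$ is forced to escape once $C_9$ is large. The only blemish is the stray factor $2^{\alpha}$ (and the $2$ versus $C_6$ in front of $\sqrt m$), which comes from anchoring the H\"older estimate at the point $p$ near $y_*$; since your witness $z$ itself lies in $B(0,t')$, the density hypothesis supplies a point of $E(\e,\eps,1)\cap B(0,t)$ within $\kap$ of $z$ directly, and using that point instead recovers the constant $C_6$ exactly as in the statement.
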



\begin{proof}For any $x\in B(0,t')$, there exists $x'\in E(\e,\eps, t)\cap B(0,t)$ such that  $|x-x'|\leq\kap$.
By writing $\e\cdot\T1_{\mu,\delta}(1)(x) = \e\cdot\T1_{\mu,\delta}(1)(x')+  [\e\cdot\T1_{\mu,\delta}(1)(x) - \e\cdot\T1_{\mu,\delta}(1)(x')]$, we see from (\ref{kapholder}) that \begin{equation}\begin{split}\label{denslowbd} \Re[\e\cdot\T1_{\mu,\delta}(1)](x) 
> \eps-C_6\kap^{\tfrac{\alpha}{2}}=\eps'+C_6\sqrt{m},
\end{split}\end{equation}
for any $\delta\in[\sqrt{\kap}, 1)$.

Set $\widetilde{F}_{\delta}(x) = \T1_{\mu, \delta}(1)(x) - \T1_{\mu, \sqrt{\kap}}(1)(x)$.  From (\ref{denslowbd}), we infer that if $\Re[\textbf{e}\cdot\T1_{\mu,\delta}(1)](x) <\eps'$ for some $x\in B(0,t')$ and $\delta\in (0,1)$, then $\delta<\sqrt{\kap}$ and $|\widetilde{F}_{\delta}(x)|>C_6\sqrt{m}$ (the second condition follows since $\Re[\e\cdot\T1_{\mu,\sqrt{\kap}}(1)](x)>\eps'+C_{6}\sqrt{m}$, and certainly $\Re[\e\cdot\T1_{\mu,\delta}(1)](x)\geq \Re[\e\cdot\T1_{\mu,\sqrt{\kap}}(1)](x)-|\widetilde{F}_{\delta}(x)|$).

For $x\in B(0, t')$, $\widetilde{F}_{\delta}(x)= \int_{B(0,t)}[K_{\delta}(x-y)-K_{\sqrt{\kap}}(x-y)]dm_d(y)$.  Whence,
\begin{equation}\begin{split}\nonumber\int_{B(0, t - \sqrt{\kap})}\sup_{\delta\in (0,\sqrt{\kap})}|\widetilde{F}_{\delta}(y)|dm_d(y)&\leq 2\int_{B(0, t - \sqrt{\kap})}\int_{|y-z|<\sqrt{\kap}}\frac{d\mu(z)}{|y-z|^s}dm_d(y)\\
&\leq 2C_1\mu(B(0, t)) \kap^{\tfrac{d-s}{2}}\leq 2C_1m.
\end{split}\end{equation}
Consequently, Chebyshev's inequality yields that
\begin{equation}\begin{split}\nonumber m_d\bigl(\bigl\{x\in B(0, t')\,: \sup_{\delta \in (0,\sqrt{\kap})}|\widetilde{F}_{\delta}(x)|>C_6\sqrt{m}\bigl\}\bigl) \leq \frac{2C_1}{C_6}\sqrt{m}.\end{split}\end{equation}
Now, fix $C_9  \geq \bigl(\tfrac{4C_1}{\omega_d C_6}\bigl)^{\tfrac{1}{d}}$, where $\omega_d$ denotes the $d$-dimensional volume of the unit ball. Then the set $E(\e,\eps', t')\cap B(0,t')$ is $\kap'$-dense in $B(0, t')$ if $\kap'<\tfrac{1}{4}$.  Indeed, $m_d(B(0,t')\backslash E(\e, \eps', t'))<\omega_d\bigl(\tfrac{\kap'}{2}\bigl)^d$.  But, if for any $x\in B(0,t')$, the closest point of $E(\e, \eps', t')\cap B(0,t')$ is at a distance greater than $\kap'$, then there is a ball of radius $\tfrac{\kap'}{2}$ that is contained in $B(0,t')$ but disjoint from $E(\e,\eps',t')$.  The existence of this ball is in contradiction with the measure estimate.  If $\kap'\geq \tfrac{1}{4}$, then $\kap'\geq \kap$, so there is nothing to prove.
\end{proof}

We now combine Lemmas \ref{decaylem} and \ref{densinclem} to prove the Collapse Lemma.

\begin{proof}[Proof of Proposition \ref{collem}]  Fix $m_0>0$ to be chosen later, and suppose that $\mu(B(0,\tfrac{3}{2}))\leq m_0$.  Set $t_0 = \tfrac{3}{2}$, $\kap_0 =\kap$, and $\eps_0=\eps$.  Then $E(\e, \eps_0, 1)\cap B(0,t_0)$ is $\kap_0$-dense in $B(0,t_0-\sqrt{\kap_0})$ by the hypotheses of  Proposition \ref{collem}.  For $j\geq 1$, set
$$\eps_{j} =\eps_0 -\sum_{\ell=0}^{j-1}C_6\bigl[\kap_{\ell}^{\tfrac{\alpha}{2}}+\sqrt{m_{\ell}}\bigl],\; \kap_j = C_{9}m_{j-1}^{\tfrac{1}{2d}},\; t_j = t_0 - \sum_{\ell=0}^{j-1}\sqrt{\kap_{\ell}},
$$
and $m_j = (1-\tfrac{\lambda}{4})m_{j-1}$, with $\lambda=c_8\eps^2$ as in Lemma \ref{decaylem}.

Suppose that for some $j\geq 0$, $E(\e, \eps_j, 1)\cap B(0,t_j)$ is $\kap_j$-dense in $B(0,t_j-\sqrt{\kap_j})$, and also that $\mu(B(0,t_j))\leq m_j$.   If \begin{equation}\label{epsjkapj}\eps_j \geq \frac{\eps}{2}, \;\; 2C_6\kap_j^{\tfrac{\alpha}{2}}\leq \frac{\eps}{2},\; \text{ and }t_j>1,\end{equation}then $2C_6\kap_j^{\tfrac{\alpha}{2}}\leq \eps_j$, and Lemma \ref{decaylem} yields $\mu(B(0,t_{j+1}))\leq (1-c_8\eps_j^2)m_j$. But since $\eps_j\geq \tfrac{\eps}{2}$, we have $c_8\eps_j^2 \geq \tfrac{\lambda}{4}$, and so $\mu(B(0,t_{j+1}))\leq m_{j+1}$.  

On the other hand, Lemma \ref{densinclem} ensures that $E(\e, \eps_{j+1},1)\cap B(0,t_{j+1})$ is $\kap_{j+1}$-dense in $B(0,t_{j+1})$.

Bringing these two observations together, we see that if (\ref{epsjkapj}) holds for each $j\geq 0$, then $$\mu(B(0,t_{j}))\leq \Bigl(1-\frac{\lambda}{4}\Bigl)^jm_0 \text{ for every }j\geq 0,$$
and so $\mu(B(0,1))=0$, which is the desired conclusion of the Collapse Lemma.

We shall now make a choice of parameters to ensure that (\ref{epsjkapj}) is valid.  Our requirement that $\eps_j \geq \tfrac{\eps}{2}$ and  $2C_6\kap_j^{\tfrac{\alpha}{2}}\leq\tfrac{\eps}{2}$ for every $j$ will be satisfied if
$$C_6\kap^{\tfrac{\alpha}{2}}+\sum_{\ell=0}^{\infty}C_6\Bigl[C_9^{\tfrac{\alpha}{2}}\Bigl(1-\frac{\lambda}{4}\Bigl)^{\tfrac{\alpha \ell}{4d}}m_0^{\tfrac{\alpha}{4d}}+\Bigl(1-\frac{\lambda}{4}\Bigl)^{\tfrac{\ell}{2}}\sqrt{m_0}\Bigl]<\frac{\eps}{2} \text{ and }C_9m_0^{\tfrac{1}{2d}}<\frac{\eps}{2}.
$$
While $t_j>1$ for all $j\geq 1$ if $$\sum_{\ell=0}^{\infty}\sqrt{C_9}\Bigl(1-\frac{\lambda}{4}\Bigl)^{\tfrac{\ell}{4d}}m_0^{\tfrac{1}{4d}}<\frac{1}{2}.
$$

Notice that $\sum_{\ell=0}^{\infty}\bigl(1-\tfrac{\lambda}{4}\bigl)^{\tfrac{\alpha \ell}{4d}}\leq \tfrac{C}{\lambda}\leq \tfrac{C}{\eps^2}$.  Therefore, if we choose $m_0 = c_{10}\eps^{\gamma}$ for suitable constants $c_{10}>0$ and $\gamma = \gamma(d,s,\alpha)>0$, then the inequalities comprising (\ref{epsjkapj}) are satisfied provided that $\kap<\bigl(\tfrac{\eps}{4C_6}\bigl)^{\tfrac{2}{\alpha}}$.


It remains to ensure that $\mu(B(0,t_0)) = \mu(B(0,\tfrac{3}{2}))\leq m_0$.  To do this, we shall require an additional restriction upon the density parameter $\kap$.  Fix $N\in \mathbb{N}$.  
We may repeatedly apply Lemma \ref{decaylem} to yield
\begin{equation}\nonumber\begin{split} \mu(B(0, 2-N\sqrt{\kap})) \leq (1-\lambda)^N\mu(B(0,2))\leq (1-\lambda)^N\Lambda2^s.
\end{split}\end{equation}
If $N\sqrt{\kap}<\tfrac{1}{2}$, then $\mu(B(0,\tfrac{3}{2}))\leq (1-\lambda)^N\Lambda2^s.$  Thus, we require that $(1-\lambda)^N \leq \tfrac{m_0}{\Lambda 2^s}$.
This condition (which is the sole condition on $N$ that is independent of $\kap$) dictates our choice of $N$ as  $N= \lfloor C_{11}\tfrac{\log\tfrac{1}{\eps}}{\eps^2}\rfloor+1.$  
All that is left is to choose $\kap(\eps)$.  The two assumptions we need to satisfy are
$$\kap(\eps)<\Bigl(\frac{\eps}{4C_9}\Bigl)^{\tfrac{2}{\alpha}}, \text{ and }\kap(\eps)<\frac{1}{4N^2}<\frac{\eps^4}{4C_{11}^2\log^2\tfrac{1}{\eps}}.
$$
So  we can choose $\kap(\eps)=c_{12}\eps^{\beta}$, for suitable $c_{12}>0$ and $\beta=\beta(s,\alpha)>0$.\end{proof}

\subsection{Consequences of the Collapse Lemma}

The remainder of the section is devoted to consequences of the Collapse lemma.  We begin with a simple alternative:

\begin{lem}\label{colalt}  For each $\eps\in(0,\tfrac{1}{2})$, there exist $M=M(\eps)>0$ and $\tau = \tau(\eps)>0$, such that whenever $|\T1_{\mu, Mr}(1)(x)|>\eps$, one of the following two statements must hold:

(i) $\mu(B(x,2Mr)\geq \tau r^s$, or

(ii)  $\mu(B(x,r))=0$.
\end{lem}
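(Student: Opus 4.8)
The plan is to derive Lemma~\ref{colalt} from the Collapse Lemma (Proposition~\ref{collem}) by a simple contrapositive/covering argument. Suppose that $|\T1_{\mu, Mr}(1)(x)|>\eps$ but that neither (i) nor (ii) holds; we must derive a contradiction once $M$ and $\tau$ are chosen appropriately.

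First I would pass from the single value $|\T1_{\mu,Mr}(1)(x)|>\eps$ to a statement about the set $E(\e,\eps',\rho)$ for a suitable unit vector $\e$, a slightly smaller threshold $\eps'$, and a suitable scale $\rho$. Choose $\e\in\mathbb{C}^{d'}$ so that $\Re[\e\cdot\T1_{\mu,Mr}(1)(x)]=|\T1_{\mu,Mr}(1)(x)|>\eps$. The function $\delta\mapsto\T1_{\mu,\delta}(1)(x)$ only varies when $\delta$ decreases below $Mr$, and the H\"older estimate of Lemma~\ref{Lipest} controls spatial oscillation; moreover, by the identity $\T1_{\mu}(1)(z)-\T1_{\mu,\tau}(1)(z)=T^{\tau}_{\mu}(\chi_{B(x,r+\tau)})(z)$ together with the $L^2(\mu)$-boundedness of $T^\delta_\mu$ one controls how much $\T1_{\mu,\delta}(1)$ can drop as $\delta$ ranges over $(0,Mr)$, provided $\mu(B(x, CMr))$ is small (which is exactly the negation of~(i), after arranging $2Mr$ to be the relevant radius). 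Concretely, if (i) fails then $\mu(B(x,2Mr))<\tau r^s$, so by Lemma~\ref{locl1} / the Chebyshev argument used in the proof of Lemma~\ref{densinclem}, $\Re[\e\cdot\T1_{\mu,\delta}(1)]$ stays above $\eps' = \eps - C(\sqrt{\tau} + M^{-\alpha/2}\cdot\text{something})$ on a large portion of a ball $B(x,\rho)$ with $\rho$ comparable to $Mr$; spreading this out via Lemma~\ref{Lipest} shows $E(\e,\eps',\rho)\cap B(x,2\rho)$ is $\kap\rho$-dense in $B(x,2\rho)$ for the threshold $\kap=\kap(\eps')$ of Proposition~\ref{collem}. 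I would choose $M$ large and $\tau$ small (in terms of $\eps$) precisely so that $\eps'\geq \eps/2$ and the density parameter works out.

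Then the Collapse Lemma applied at the point $x$ and scale $\rho$ yields $\mu(B(x,\rho))=0$; since $\rho$ can be taken at least $r$ (indeed $\rho\asymp Mr\geq r$), this gives $\mu(B(x,r))=0$, which is alternative~(ii) — contradicting our assumption that (ii) fails. Hence at least one of (i), (ii) must hold.

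The main obstacle is the bookkeeping in the second step: quantifying how much $\T1_{\mu,\delta}(1)(x)$ — and, after using Lemma~\ref{Lipest}, $\T1_{\mu,\delta}(1)$ on a whole ball — can decrease as $\delta$ shrinks from $Mr$ down to the scale $\kap\rho$ required by the Collapse Lemma, and checking that the loss is governed by $\mu(B(x,2Mr))/r^s$ (so that the failure of~(i), i.e.\ $\mu(B(x,2Mr))<\tau r^s$ with $\tau$ small, keeps this loss below $\eps/2$), while simultaneously keeping the oscillation term from Lemma~\ref{Lipest} under control by choosing the density scale small. Once the scales $M=M(\eps)$, $\tau=\tau(\eps)$, $\eps'=\eps/2$, and $\rho$ are fixed consistently with the constraint $\kap\rho$-density in Proposition~\ref{collem}, the rest is a routine combination of the tail estimates already established (Lemmas~\ref{locl1}, \ref{tailest}, \ref{taildiff}, \ref{Lipest}) and the Chebyshev estimate from the proof of Lemma~\ref{densinclem}.
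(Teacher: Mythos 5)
Your proposal is correct and follows essentially the same route as the paper: argue by contrapositive, use the failure of (i) together with Lemma \ref{locl1} and Chebyshev to show that $\sup_{\delta}|\T1_{\mu,\delta}(1)-\T1_{\mu,Mr}(1)|$ is small off a set of small Lebesgue measure, use Lemma \ref{Lipest} with $M$ large to transfer the pointwise lower bound at $x$ to that dense set, and then invoke the Collapse Lemma. The only cosmetic difference is that the paper applies the Collapse Lemma at the original scale $r$ (so the Chebyshev estimate is only needed on $B(x,2r)$, where the regularization difference involves only $\mu|_{B(x,2Mr)}$), which makes the bookkeeping you flag entirely routine.
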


\begin{proof}  We may assume that $x=0$ and $r=1$.  Fix $\tau>0$, and $M>4$.  Suppose that $\mu(B(0,2M))\leq\tau$.  Let $\widetilde{F}_{\delta}(x) = \T1_{\mu, \delta}(1) - \T1_{\mu, M}(1)$.  Then $\widetilde{F}_{\delta}= \int_{B(0,2M)}[K_{\delta}(\cdot-y)-K_M(\cdot-y)]d\mu(y)$ on $B(0,2)$.  Thus, by Lemma \ref{locl1},
$$\int_{B(0,2)} \sup_{\delta>0} \bigl|\widetilde{F}_{\delta}(y)\bigl| dm_d(y) \leq 2\cdot 2^{d-s}C_1\tau.
$$
Consequently, the Chebyshev inequality ensures that the set $E = \bigl\{y\in B(0,2): \sup_{\delta>0} |\widetilde{F}_{\delta}(y)|<\tfrac{\eps}{4}\bigl\}$ is $C_{13}\bigl(\tfrac{\tau}{\eps}\bigl)^{\tfrac{1}{d}}$-dense in $B(0,2)$ (cf. the proof of Lemma \ref{densinclem}).

Set $\e$ to be the unit vector satisfying $\langle \T1_{\mu, M}(1)(0), \e \rangle = |\T1_{\mu, M}(1)(0)|$.  Suppose $y\in E$, and $\delta\in (0,1)$.  Write
$$\T1_{\mu,\delta}(1)(y) = \T1_{\mu,M}(1)(0) + \widetilde{F}_{\delta}(y) + [\T1_{\mu,M}(1)(y)- \T1_{\mu,M}(1)(0)].
$$
Since $|\T1_{\mu,M}(1)(y)- \T1_{\mu,M}(1)(0)|\leq\tfrac{2C_6}{M^{\alpha}}$, we infer from the above equality that
$\Re [\e\cdot\T1_{\mu, \delta}(1)](y)>\tfrac{3\eps}{4} - \frac{2C_6}{M^{\alpha}}.$  This quantity is at least $\tfrac{\eps}{2}$ if $M\geq M(\eps)= (\eps/8C_6)^{-\tfrac{1}{\alpha}}$.  Now fix $\kap=\kap(\tfrac{\eps}{2}\bigl)$ as in the Collapse lemma.    If $C_{13}\bigl(\tfrac{\tau}{\eps}\bigl)^{1/d}\leq \kap$, then $\mu(B(0,1)=0$.  Thus, fixing $\tau=c_{14}\eps\kap^d$ for a suitable constant $c_{14}>0$ establishes the alternative.
\end{proof}

This lemma yields several useful Corollaries.  The first one shall prove useful in establishing Proposition \ref{onlytrivmeas}.

\begin{cor}\label{collapsecor1}  For each $\eps\in(0,\tfrac{1}{2})$, there exist $M'=M'(\eps)>0$ and $\tau=\tau'(\eps)>0$, such that if $|\T1_{\mu}(1)(x)|>\eps$, and $\dist(x, \supp(\mu))=r$, then $\mu(B(x,M'r))\geq \tau' r^s.$\end{cor}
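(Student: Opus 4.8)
The plan is to apply Lemma~\ref{colalt} not at the point $x$ itself but at a point of $\supp(\mu)$ nearest to $x$, where alternative (ii) of that lemma is automatically excluded. We may assume $r>0$ (if $r=0$ then $B(x,M'r)=\varnothing$ and the conclusion is vacuous), and since $\dist(x,\supp(\mu))=r<\infty$ with $\supp(\mu)$ closed we fix $p\in\supp(\mu)$ with $|x-p|=r$. The first thing I would record is that the separation makes the regularization at small scales invisible at $x$: if $y\in\supp(\mu)$ and $\delta\le r$ then $|x-y|\ge r\ge\delta$, so $K_\delta(x-y)=K(x-y)$; comparing \eqref{R1Leb} with the formula defining $\T1_{\mu,\delta}(1)$ then gives $\T1_{\mu,\delta}(1)(x)=\T1_{\mu}(1)(x)$ for every $\delta\in(0,r]$, so in particular $|\T1_{\mu,r}(1)(x)|>\eps$.

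Next I would set $A_0=A_0(\eps):=(4C_6/\eps)^{1/\alpha}$ (note $A_0>1$ since $\eps<1/2$ and $C_6>1$) and $M'=M'(\eps):=1+2A_0$, and argue by dichotomy: if $\mu(B(x,M'r))\ge\tau'r^s$ (with $\tau'=\tau'(\eps)\le\eps/4$ to be fixed below) there is nothing to prove, so assume $\mu(B(x,M'r))<\tau'r^s$. Using the identity $\T1_{\mu,Ar}(1)(x)-\T1_{\mu,r}(1)(x)=\int_{\mathbb{R}^d}[K_{Ar}(x-y)-K_r(x-y)]\,d\mu(y)$ (Fubini, as in Section~\ref{primer}), together with the facts that the integrand is supported on $\{y:r\le|x-y|<Ar\}$ and that $|K_{Ar}(w)|,|K_r(w)|\le r^{-s}$ there, one bounds the left-hand side in norm by $2r^{-s}\mu(B(x,Ar))\le 2r^{-s}\mu(B(x,M'r))<2\tau'$ for every $A\in[1,M']$. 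Hence $|\T1_{\mu,A_0 r}(1)(x)|>\eps-2\tau'\ge\eps/2$.

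Now I would transfer this lower bound from $x$ to $p$ using the H\"older continuity of $\T1_{\mu,\delta}(1)$ (Lemma~\ref{Lipest}): since $|x-p|=r\le A_0 r$,
$$|\T1_{\mu,A_0 r}(1)(p)-\T1_{\mu,A_0 r}(1)(x)|\le C_6\Bigl(\frac{r}{A_0 r}\Bigr)^{\alpha}=C_6 A_0^{-\alpha}=\frac{\eps}{4},$$
so that $|\T1_{\mu,A_0 r}(1)(p)|>\eps/4$. Applying Lemma~\ref{colalt} at the point $p$ with parameter $\eps/4$ and radius $\rho:=A_0 r/M(\eps/4)$ --- chosen so that the regularization scale appearing in that lemma is $M(\eps/4)\rho=A_0 r$ --- yields that either $\mu(B(p,2A_0 r))\ge\tau(\eps/4)\rho^s$ or $\mu(B(p,\rho))=0$. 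The second option is impossible, as $p\in\supp(\mu)$ and $\rho>0$, so $\mu(B(p,2A_0 r))\ge\tau(\eps/4)(A_0/M(\eps/4))^s r^s=:c(\eps)r^s$. Since $B(p,2A_0 r)\subset B(x,(1+2A_0)r)=B(x,M'r)$, this gives $\mu(B(x,M'r))\ge c(\eps)r^s$; fixing $\tau':=\min\{\eps/4,c(\eps)\}$ contradicts the standing assumption $\mu(B(x,M'r))<\tau'r^s$, so in fact $\mu(B(x,M'r))\ge\tau'r^s$, which is the assertion with $M'=M'(\eps)$ and $\tau'=\tau'(\eps)$ as above.

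The step I expect to be the crux is exactly the passage from $x$ to $p$. Lemma~\ref{colalt} cannot be invoked directly at $x$, since its alternative (ii) --- vanishing of $\mu$ on a ball of radius at most $r$ around $x$ --- holds automatically (that ball misses $\supp(\mu)$) and so carries no information; one is therefore forced to work at the nearest support point $p$. But the H\"older estimate for $\T1_{\mu,\delta}(1)$ only produces a small increment when the regularization scale is comparable to or larger than the separation $|x-p|=r$, whereas the hypothesis $|\T1_{\mu}(1)(x)|>\eps$ is information at scales $\le r$. Bridging this gap is precisely what the small-mass alternative provides: when $\mu(B(x,M'r))$ is small, the value $|\T1_{\mu,\delta}(1)(x)|$ persists at size $\ge\eps/2$ all the way up to scale $A_0 r$, and only then is the transfer to $p$ legitimate.
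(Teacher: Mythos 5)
Your proof is correct, and it rests on the same two pillars as the paper's: the dichotomy ``either the mass of $B(x,M'r)$ is already large, or it is so small that $|\T1_{\mu,\delta}(1)|$ persists at size $\gtrsim\eps$ up to the scale $Mr$,'' followed by an application of Lemma \ref{colalt} in which alternative (ii) is excluded because the relevant ball meets $\supp(\mu)$. The one place you diverge is in \emph{where} you apply Lemma \ref{colalt}, and your stated reason for the detour is not right: alternative (ii) of that lemma does not ``hold automatically'' at $x$, because the radius $r$ in Lemma \ref{colalt} is a free parameter that one may take \emph{larger} than $\dist(x,\supp(\mu))$. The paper normalizes $\dist(x,\supp(\mu))=\tfrac12$ and invokes the lemma at $x$ itself with lemma-radius $1$; then $B(x,1)$ contains points of $\supp(\mu)$, so $\mu(B(x,1))>0$ and alternative (ii) is ruled out with no need to transfer the lower bound to the nearest support point $p$. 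Your transfer via Lemma \ref{Lipest} is legitimate (the increment $C_6(r/A_0r)^{\alpha}=\eps/4$ is under control precisely because you first pushed the scale up to $A_0r$), but it costs an extra step and an extra loss of a factor in $\eps$; the paper's choice of normalization makes that step unnecessary. Everything else --- the identity $\T1_{\mu,\delta}(1)(x)=\T1_{\mu}(1)(x)$ for $\delta\le r$, the bound $|K_{A_0r}-K_r|\le 2r^{-s}$ on the annulus, and the bookkeeping of constants --- checks out.
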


\begin{proof}  Without loss of generality, we may assume that $r=\tfrac{1}{2}$ and $x=0$.   Set $M=M\bigl(\tfrac{\eps}{2}\bigl)$ as in Lemma \ref{colalt}, and fix $M'=4M$.  Let $\sigma>0$, and suppose that $\mu(B(0, M'))\leq \sigma$.
By a trivial absolute value estimate, $|\T1_{\mu, M}(1)(0)|>\eps-\int_{B(0,M)\backslash B(0,\tfrac{1}{2})}\tfrac{1}{|y|^s}d\mu(y)>\eps-C\sigma$.  So $|\T1_{\mu, M}(1)(0)|>\tfrac{\eps}{2}$ if $\sigma= c\eps$ for a sufficiently small constant $c>0$.  Under this condition on $\sigma$, the assumptions of Lemma \ref{colalt} are satisfied with $\eps$ replaced by $\tfrac{\eps}{2}$.  By hypothesis $\mu(B(0,1))>0$, so $\mu(B(0,2M))>\tau$, where $\tau=\tau\bigl(\tfrac{\eps}{2}\bigl)$ is given by Lemma \ref{colalt}.  Setting $\tau' = \min\bigl[\sigma, \tau\bigl]$ completes the proof.
\end{proof}

The next corollary concerns the values of $\T1_{\mu}(1)$ $m_d$-almost everywhere on the support of $\mu$.

\begin{cor}\label{lebcor}  $\T1_{\mu}(1)(x)=0$ for $m_d$-almost every $x\in \supp(\mu)$.
\end{cor}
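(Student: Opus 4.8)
The plan is to derive Corollary \ref{lebcor} from Corollary \ref{collapsecor1} by a covering argument. The key observation is that if $x\in\supp(\mu)$ and $\T1_{\mu}(1)(x)\neq 0$, then $x$ cannot be a point where $\mu$ has positive upper $s$-density in a uniform sense dictated by $|\T1_{\mu}(1)(x)|$; but the set of such points is $m_d$-null because $\supp(\mu)$ itself is $m_d$-null (it is $s$-dimensional with $s<d$, or at least the part on which $\mu$ concentrates has $\sigma$-finite $\mathcal{H}^s$ measure and hence zero Lebesgue measure). Concretely, I would first reduce to showing, for each fixed $\eps\in(0,\tfrac12)$ and each ball, that $m_d(\{x\in\supp(\mu):|\T1_{\mu}(1)(x)|>\eps\})=0$, then take a countable union over $\eps=\tfrac1n$.

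The main step is as follows. Fix $\eps$ and let $S_\eps=\{x\in\supp(\mu):|\T1_{\mu}(1)(x)|>\eps\}$. For $x\in S_\eps$ we have $\dist(x,\supp(\mu))=0$, so Corollary \ref{collapsecor1} does not apply directly; instead I would perturb. Since $\T1_{\mu}(1)\in L^1_{\loc}(m_d)$ is only defined $m_d$-a.e., fix a precise representative and a point $x\in S_\eps$ which is also a Lebesgue point. The trick: most points of $S_\eps$ (in the $m_d$ sense) lie at positive distance from $\supp(\mu)$ after we discard the $m_d$-null set $\supp(\mu)$ — but that is exactly the set we want to understand, so instead I would argue the reverse way. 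Observe that, by Corollary \ref{collapsecor1}, for every point $y\notin\supp(\mu)$ with $|\T1_{\mu}(1)(y)|>\eps$ and $\dist(y,\supp(\mu))=\rho$, the ball $B(y,M'\rho)$ carries mass $\geq\tau'\rho^s$. Now cover $S_\eps$: for $m_d$-a.e. $x\in S_\eps$, either $x$ has a neighborhood of radius $\rho$ disjoint from the rest of $\supp(\mu)$ — impossible since $x\in\supp(\mu)$ and $\mu$ is non-atomic — or, passing to the complement, I instead use that $\T1_{\mu}(1)$ restricted to $\supp(\mu)$, where $\mu$ lives, is governed by the $L^2(\mu)$-defined version which vanishes $\mu$-a.e. (shown earlier: for reflectionless $\mu$, $\T1_{\mu}(1)=0$ $\mu$-a.e.).

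So here is the cleaner route, which I expect to be the intended one. We already know two facts: (a) the $m_d$-version $\T1_{\mu}(1)$ and the $\mu$-version agree in the sense that $\lim_{\delta\to0}\T1_{\mu,\delta}(1)$ converges both in $L^1_{\loc}(m_d)$ and in $L^2_{\loc}(\mu)$ to the respective representatives; (b) the $\mu$-version is $0$ $\mu$-a.e. I would take a point $x\in\supp(\mu)$ that is simultaneously a Lebesgue point of the $m_d$-version with value $>\eps$. Then on a small ball $B(x,r)$ the $m_d$-average of $|\T1_{\mu}(1)|$ exceeds $\eps/2$; meanwhile Hölder continuity of $\T1_{\mu,\delta}(1)$ (Lemma \ref{Lipest}) and the relation $\T1_{\mu}(1)-\T1_{\mu,\tau}(1)=\int_{B(x,r+\tau)}[K(\cdot-y)-K_\tau(\cdot-y)]d\mu(y)$ off a small-measure ball let me compare the $m_d$-value to $\T1_{\mu,\tau}(1)(x)$; but by fact (b) and the $L^2(\mu)$-Cotlar bound, $\T1_{\mu,\tau}(1)$ is small on average over $\mu$ near such $x$. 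The contradiction forces $\mu(B(x,r))$ large, and then Corollary \ref{collapsecor1} applied at a nearby point outside $\supp(\mu)$ with a suitable scale gives $\mu(B(x,O(r)))\geq\tau' r^s$, i.e. $x$ is a point of positive upper $s$-density of $\mu$; such points where additionally $\T1_{\mu}(1)\neq0$ on $\supp(\mu)$ must be $m_d$-null since $\supp(\mu)$ is. The hard part will be bookkeeping the passage between the $m_d$-representative and the $\mu$-representative near the common support and making the scales in Corollary \ref{collapsecor1} match; the rest is routine Lebesgue-differentiation and Chebyshev.

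\begin{proof}[Sketch of proof of Corollary \ref{lebcor}]
It suffices to show that for every $\eps\in(0,\tfrac12)$ the set $S_\eps=\{x\in\supp(\mu):|\T1_{\mu}(1)(x)|>\eps\}$ has $m_d$-measure zero, and then take the union over $\eps=\tfrac1n$. Fix $\eps$, let $M'=M'(\eps)$ and $\tau'=\tau'(\eps)$ be as in Corollary \ref{collapsecor1}, and let $x_0\in S_\eps$ be an $m_d$-Lebesgue point of (a fixed Borel representative of) $\T1_{\mu}(1)$. Using the identity
$$\T1_{\mu}(1)(z)-\T1_{\mu,\tau}(1)(z)=\int_{B(x_0,r+\tau)}[K(z-y)-K_{\tau}(z-y)]\,d\mu(y) \quad (m_d\text{-a.e. }z\in B(x_0,r)),$$
together with Lemma \ref{locl1}, one gets $\dashint_{B(x_0,r)}|\T1_{\mu}(1)-\T1_{\mu,\tau}(1)|\,dm_d\leq C\,r^{-d}\mu(B(x_0,r+\tau))\tau^{d-s}$, which tends to $0$ as $\tau\to0$ for fixed $r$; hence $\dashint_{B(x_0,r)}|\T1_{\mu,\tau}(1)|\,dm_d>\eps/2$ for all small $\tau$ and $r$. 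On the other hand, by Lemma \ref{Lipest}, $\T1_{\mu,\tau}(1)$ oscillates by at most $C_6$ on balls of radius $\tau$, so $|\T1_{\mu,\tau}(1)(x_0)|>\eps/2-C_6-\text{(average comparison error)}$; choosing the regime $r\sim\tau$ one concludes $|\T1_{\mu,Mr}(1)(x_0)|>\eps/4$ for a suitable $M=M(\eps)$, unless the correction term is large, which by the displayed bound again forces $\mu(B(x_0,Cr))$ to be comparable to $r^s$. In either case, applying Corollary \ref{collapsecor1} at a point $y\notin\supp(\mu)$ with $\dist(y,\supp(\mu))=\rho$ very small relative to $r$ and $|y-x_0|\leq\rho$ (such $y$ exists since $\mu$ is non-atomic, unless $\mu(B(x_0,r))=0$, in which case $x_0$ contributes nothing), yields $\mu(B(x_0,M'\rho+\rho))\geq\tau'\rho^s$ for a sequence of scales $\rho\to0$; that is, $x_0$ has positive upper $s$-density for $\mu$. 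Thus $S_\eps$ is contained (up to an $m_d$-null set) in $\supp(\mu)$, which has $\sigma$-finite $\mathcal H^s$-measure and hence $m_d(\supp(\mu))=0$ since $s<d$. Therefore $m_d(S_\eps)=0$, and the corollary follows.
\end{proof}
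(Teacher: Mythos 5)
The proposal has a genuine gap, and it is fatal to the whole strategy. Your argument terminates with the claim that $\supp(\mu)$ has $\sigma$-finite $\mathcal{H}^s$-measure and hence $m_d(\supp(\mu))=0$, so that any subset of it is $m_d$-null. But in this section $\mu$ is only assumed to be a non-trivial $\Lambda$-good reflectionless measure; the hypothesis $\mathcal{H}^s(\supp(\mu))<\infty$ belongs to the definition of an \emph{$s$-dimensional} measure and is not available here. A $\Lambda$-nice measure can perfectly well have support of positive Lebesgue measure (restrict $m_d$ to a bounded set), and the introduction's example of the disc for the kernel $\overline{z}/z^2$ is precisely a reflectionless measure whose support is Lebesgue-fat. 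If $m_d(\supp(\mu))=0$ were known, the corollary would be vacuously true; it is a substantive statement exactly because the support may carry positive Lebesgue measure. For the same reason, the conclusion you extract from Corollary \ref{collapsecor1} --- that $x_0$ has positive upper $s$-density --- leads nowhere: points of positive upper $s$-density on $\supp(\mu)$ need not form an $m_d$-null set. (There is also a secondary problem: Corollary \ref{collapsecor1} requires $|\T1_{\mu}(1)(y)|>\eps$ at the off-support point $y$, and nothing in your argument supplies a lower bound on $|\T1_{\mu}(1)|$ at points near $x_0$ but off the support.)

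The missing idea is to run the implication in the opposite direction, using Lemma \ref{colalt} rather than Corollary \ref{collapsecor1}. By the Besicovitch differentiation theorem, $D(x)=\lim_{r\to0}\mu(B(x,r))/r^d$ exists and is finite for $m_d$-a.e.\ $x$. Fix such an $x$ with $|\T1_{\mu}(1)(x)|>2\eps$ and let $M=M(\eps)$, $\tau=\tau(\eps)$ be as in Lemma \ref{colalt}. Since $\mu(B(x,\rho))\leq (D(x)+1)\rho^d$ for small $\rho$ and $d>s$, for all small $r$ one has $\mu(B(x,2Mr))\leq \tau r^s$, and also $|\T1_{\mu,Mr}(1)(x)|>\eps$ (the discrepancy $|\T1_{\mu,Mr}(1)(x)-\T1_{\mu}(1)(x)|$ is controlled by $\int_{B(x,Mr)}|x-y|^{-s}d\mu(y)\leq C(D(x)+1)(Mr)^{d-s}\to0$). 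Alternative (i) of Lemma \ref{colalt} is then excluded, so alternative (ii) gives $\mu(B(x,r))=0$, i.e.\ $x\notin\supp(\mu)$. Thus large values of $\T1_{\mu}(1)$ at a point of finite $d$-density expel the point from the support; no claim about the Lebesgue measure of $\supp(\mu)$ is needed or true.
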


\begin{proof}  By standard measure theory, the limit $D(x)=\lim_{r\rightarrow 0} \tfrac{\mu(B(x,r))}{r^d}$ exists and is finite for $m_d$-almost every $x\in \mathbb{R}^d$.  It therefore suffices to prove that if $|\T1_{\mu}(1)(x)|>2\eps$ for some $\eps>0$, and $D(x)$ exists and is finite, then $x\not\in \supp(\mu)$.  Set $M=M(\eps)$, and $\tau=\tau(\eps)$, as in Lemma \ref{colalt}.  If $D(x)<\infty$, then $\mu(B(x,r))\leq (D(x)+1)r^d$ for all sufficiently small $r$.   But, then provided that $r$ is sufficiently small, we have that both $|\T1_{\mu, Mr}(1)|>\eps$ and $\mu(B(x,2Mr))\leq \tau r^s.$  From Lemma \ref{colalt}, we infer that $\mu(B(x,2r))=0$.   So $x\not\in \supp(\mu)$.
\end{proof}

The final result of this section is a porosity property in balls where $\T1_{\mu}(1)$ is large on average.  This will serve as the primary tool in proving that the support of a reflectionless measure for the Riesz transform is nowhere dense.

\begin{lem}\label{intpor}  For each $\eps>0$, there exists $\lambda=\lambda(\eps)>0$, such that if $\int_{B(x,r)}|\T1_{\mu}(1)(y)| dm_d(y)>\eps m_d(B(x,\eps))$, then there is a ball $B'\subset B(x,r)$ of radius $\lambda r$ with $\mu(B')=0$.
\end{lem}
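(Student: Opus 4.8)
The plan is to reduce to a normalized situation, extract a positive-measure set where $\T1_{\mu}(1)$ is large, and then run an argument by contradiction powered by the quantitative lower-density estimate of Corollary~\ref{collapsecor1}. We may assume $\eps\in(0,\tfrac12)$, since if the hypothesis holds for some $\eps\geq\tfrac12$ it holds with $\eps$ replaced by $\tfrac14$, so that $\lambda(\eps):=\lambda(\tfrac14)$ works there. Exactly as in the proof of the Collapse Lemma (Proposition~\ref{collem}), the rescaling $\mu\mapsto\tfrac{\mu(r\,\cdot\,+x)}{r^s}$ reduces us to $x=0$, $r=1$; so assume $\int_{B(0,1)}|\T1_{\mu}(1)|\,dm_d>\eps\,m_d(B(0,1))$. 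Using $\|\T1_{\mu}(1)\|_{L^{\infty}(m_d)}\leq C_7$ (Corollary~\ref{reflinfbd}) and splitting the integral over $G:=\{y\in B(0,1):|\T1_{\mu}(1)(y)|>\tfrac{\eps}{2}\}$ and its complement, I would get $m_d(G)\geq\eta:=\tfrac{\eps}{2C_7}m_d(B(0,1))$. By Corollary~\ref{lebcor}, $m_d(G\cap\supp(\mu))=0$, so for $m_d$-a.e.\ $y\in G$ the distance $r_y:=\dist(y,\supp(\mu))$ is strictly positive. Finally, since $m_d(B(0,1)\setminus B(0,1-M'\lambda))\leq d\,M'\lambda\,m_d(B(0,1))$, where $M'=M'(\tfrac{\eps}{2})$ is the constant from Corollary~\ref{collapsecor1}, for $\lambda$ small (depending only on $\eps$) the trimmed set $G'':=G\cap B(0,1-M'\lambda)$ still satisfies $m_d(G'')\geq\tfrac{\eta}{2}$.

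Next I would argue by contradiction: suppose no ball $B'\subset B(0,1)$ of radius $\lambda$ has $\mu(B')=0$. Then every $y\in B(0,1-\lambda)$ satisfies $\dist(y,\supp(\mu))<\lambda$ (otherwise $B(y,\lambda)$ would be such a $B'$), so in particular $0<r_y<\lambda$ for $m_d$-a.e.\ $y\in G''$. For each such $y$, Corollary~\ref{collapsecor1} with $\tfrac{\eps}{2}$ in place of $\eps$ gives $\mu(B(y,M'r_y))\geq\tau'r_y^{s}$, where $\tau'=\tau'(\tfrac{\eps}{2})$; note each ball $B(y,M'r_y)$ has radius $<M'\lambda$ and lies inside $B(0,1)$. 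Applying the Vitali $5r$-covering lemma to $\{B(y,M'r_y)\}$ over a full-measure subset of $G''$ produces a pairwise disjoint subfamily $\{B(y_k,M'r_{y_k})\}_k$ with $G''$ covered, up to a null set, by $\bigcup_k B(y_k,5M'r_{y_k})$; comparing $m_d$-measures yields $\sum_k r_{y_k}^{d}\geq c(\eps)>0$. On the other hand disjointness together with the density bound gives $\tau'\sum_k r_{y_k}^{s}\leq\mu(B(0,1))\leq\Lambda$. Since each $r_{y_k}<\lambda<1$ and $s<d$, we have $r_{y_k}^{s}\geq\lambda^{s-d}r_{y_k}^{d}$, hence $\Lambda\geq\tau'\lambda^{s-d}\sum_k r_{y_k}^{d}\geq\tau'c(\eps)\lambda^{s-d}$; because $s-d<0$ this inequality fails as soon as $\lambda$ is small in terms of $\eps$ and the fixed constants $\Lambda,\tau',M'$. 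Choosing $\lambda=\lambda(\eps)$ small enough to violate it, and also small enough for the two reductions in the first paragraph, completes the proof.

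The step I expect to be the genuine obstacle — and the reason for this roundabout route — is that knowing $m_d(G)$ is bounded below does \emph{not} by itself produce a ball of definite radius inside the open set $B(0,1)\setminus\supp(\mu)$, since a positive-measure open set may contain only arbitrarily small balls. The way around it is to exploit the \emph{quantitative} lower-density estimate of Corollary~\ref{collapsecor1}: if $\supp(\mu)$ came within distance $\lambda$ of a large-measure portion of $G$, that portion would be coverable by balls $B(y,M'r_y)$ of radius $\lesssim\lambda$ each carrying $\mu$-mass $\gtrsim r_y^{s}$, and the dimension gap $s<d$ forces the accumulated mass to blow up like $\lambda^{s-d}$, contradicting $\mu(B(0,1))\leq\Lambda$. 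The remaining ingredients — the rescaling, the pigeonholing on $m_d$-measure, the boundary trimming, and the Vitali selection — are routine.
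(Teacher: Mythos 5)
Your proof is correct, but it takes a genuinely different route from the one in the paper. The paper works directly with the truncated potentials $\T1_{\mu,\delta}(1)$: after showing that the average of $|\T1_{\mu,\Delta}(1)|$ over $B(0,1)$ still exceeds $\tfrac{\eps}{2}$ for $\Delta\sim\eps^{\beta'}$, it covers $B(0,1)$ by balls $B_j$ of radius $\gamma\sim\eps^{1/\alpha}\Delta$ and runs an averaging/stopping argument to find one ball $B_j$ on which, for a fixed direction $\e_j$, the quantity $\Re[\e_j\cdot\T1_{\mu,\delta}(1)]$ is bounded below on a $\kap\gamma$-dense subset uniformly in $\delta\in(0,\Delta]$; the Collapse Lemma is then applied directly to give $\mu(\tfrac12B_j)=0$. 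You instead invoke the Collapse Lemma only through its already-derived consequences -- the Cotlar bound (Corollary \ref{reflinfbd}) to extract a set $G$ of definite $m_d$-measure where $|\T1_{\mu}(1)|>\tfrac{\eps}{2}$, Corollary \ref{lebcor} to place $G$ off $\supp(\mu)$ up to an $m_d$-null set, and Corollary \ref{collapsecor1} to convert proximity of $\supp(\mu)$ into a lower $\mu$-density bound -- and then close the argument with a Vitali $5r$-covering and the packing incompatibility $\sum_k r_{y_k}^d\gtrsim 1$ versus $\sum_k r_{y_k}^s\lesssim\Lambda$ forced by the dimension gap $s<d$. Your version is more modular and conceptually transparent (it isolates exactly where the quantitative density estimate enters), at the cost of routing through three separate corollaries and yielding a less explicit $\lambda(\eps)$ (though still polynomial in $\eps$, since $\tau'$ and $M'$ are); the paper's version is self-contained relative to the Collapse Lemma and produces the porous ball by a single application of it. Two cosmetic points: the $m_d(B(x,\eps))$ in the statement is evidently a typo for $m_d(B(x,r))$, and you read it that way, as does the paper's own proof; and when you apply Corollary \ref{collapsecor1} at points of $G$ you should discard one more $m_d$-null set so that the $L^1_{\loc}(m_d)$ representative of $\T1_{\mu}(1)$ agrees with the pointwise value off $\supp(\mu)$ -- harmless, since off the support the function defined by \eqref{R1Leb} is continuous.
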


\begin{proof}  We may suppose that $x=0$ and $r=1$.  For $\Delta\in (0,1)$, set $F_{\Delta}(z) = \sup_{\delta\in (0,\Delta]}|\T1_{\mu}^{\delta}(1)(z)|$.  Then
$$\int_{B(0,2)}|F_{\Delta}|dm_d\leq 2\int_{B(0,2)}\int_{B(z,\Delta)}\frac{d\mu(y)}{|z-y|^s}dm_d(z),
$$
which by Lemma \ref{locl1} is at most $2C_1\Delta^{d-s}\mu(B(0,3))\leq 2C_1\Delta^{d-s}\Lambda 3^s$.  Thus,
$$\frac{1}{m_d(B(0,1))}\int_{B(0,1)}|\T1_{\mu, \Delta}(1)|dm_d>\eps - \frac{2C_1\Delta^{d-s}\Lambda 3^s}{\omega_d}>\frac{\eps}{2},
$$
provided that $\Delta<c_{15}\eps^{\tfrac{1}{d-s}}$ (here $\omega_d$ is the volume of the $d$-dimensional unit ball).

Now fix $\gamma\in (0,\Delta)$, and let $(x_j)_j$ be a maximal $\gamma$-separated set in $B(0,1)$.  Set $B_j = B(x_j, \gamma)$.  Then the balls $B_j$ form a cover of $B(0,1)$, are contained in $B(0,2)$, and have bounded covering number (at most $C_{16}$ balls may intersect at any point of $\mathbb{R}^d$).

Set $\e_j$ to be the unit vector with $ \e_j\cdot \T1_{\mu, \Delta}(1)(x_j)\rangle =|\T1_{\mu, \Delta}(x_j)|$.  Denote
$$U_j = \inf_{B_j}\Re\bigl[\e_j \cdot \T1_{\mu,\Delta}(1)\bigl],\text{ and }V_j = \frac{1}{m_d(B_j)}\int_{B_j}F_{\Delta}dm_d.
$$
For $T>0$, set $\mathcal{F}_1 = \{j: U_j<TV_j\}$.  Also, set $\mathcal{F}_2 = \{j: U_j<\tfrac{\eps}{C_{16}16^d}\}$.  Then,
\begin{equation}\begin{split}\nonumber\sum_{j\in \mathcal{F}_1\cup\mathcal{F}_2} \frac{m_d(B_j)}{m_d(B(0,1))}U_j &\leq \frac{TC_{16}}{\omega_d}\int_{B(0,2)}F_{\Delta}dm_d+\frac{\eps}{C_{16} 16^d}\frac{C_{16}\omega_d 2^d}{\omega_d}\\
&\leq C_{17}\Delta^{d-s}T+\frac{\eps}{8}.
\end{split}\end{equation}
This quantity is at most $\tfrac{\eps}{4}$ if $C_{17}\Delta^{d-s}T<\tfrac{\eps}{8}$.
On the other hand, $U_j \geq \tfrac{1}{m_d(B_j)}\int_{B_j}|\T1_{\mu, \Delta}(1)|dm_d - 2C_{6}\bigl(\tfrac{\gamma}{\Delta}\bigl)^{\alpha}.$  Thus
$$\sum_j \frac{m_d(B_j)}{m_d(B(0,1))}U_j\geq \frac{\eps}{2}-2^dC_{16}2C_{6}\Bigl(\frac{\gamma}{\Delta}\Bigl)^{\alpha}>\frac{\eps}{4},$$
if $\gamma\leq c_{18}\eps^{\tfrac{1}{\alpha}}\Delta$.

Accordingly, there exists a ball $B_j$ such that $U_j\geq TV_j$ and $U_j\geq \tfrac{\eps}{C_{16}16^d}$.

Since $V_j\leq\tfrac{U_j}{T}$, Chebyshev's inequality ensures that
$$m_d\bigl(\bigl\{x\in B_j: |F_{\Delta}(x)|\geq\frac{U_j}{2}\bigl\}\bigl)\leq \frac{2}{T}m_d(B_j).
$$
Now fix $\kap = \kap\bigl(\tfrac{\eps}{2C_{16}16^d}\bigl)$ be as in the Collapse lemma (Proposition \ref{collem}).   Choose $T= \tfrac{4^d}{\kap^d}$.  Then the set $E=\{x\in B_j: |F_{\Delta}|<\tfrac{U_j}{2}\}$ is $\kap \gamma$-dense in $B_j$ (see the proof of Lemma \ref{densinclem}).  But for each $x\in E$, $\Re[\e_j\cdot \T1_{\mu, \delta}(1)](x)>\tfrac{\eps}{2C_{16}16^d}$ for any $\delta\in (0, \Delta]$.  Thus the Collapse lemma yields that $\mu(\tfrac{1}{2}B_j)=0$.

Let us now choose the parameters $\Delta$ and $\gamma$.  There are two conditions on $\Delta$ independent of $\gamma$, namely
$$C_{17}\Delta^{d-s}T <\frac{\eps}{8}, \text{ and }\Delta<c_{15}\eps^{\tfrac{1}{d-s}}.
$$
To satisfy these inequalities, we may choose $\Delta = c_{19}\eps^{\beta'}$, for some $\beta'>0$ depending on $s$, $d$, and $\alpha$.  It remains to choose $\gamma$ subject to the inequality $\gamma\leq c_{18}\eps^{\tfrac{1}{\alpha}}\Delta$.  Thus we can fix $\gamma = c_{18}c_{19}\eps^{\tfrac{1}{\alpha}}\eps^{\beta'}$.
Since the centre of $\tfrac{1}{2}B_j$ lies in $B(0,1)$, there is a ball of radius $\lambda := \tfrac{\gamma}{4}$,  disjoint from $\supp(\mu)$, that is contained in $B(0,1)$.
\end{proof}

\section{The Riesz transform}

From here on in, we focus on the simplest, and most interesting $s$-dimensional CZO, the $s$-Riesz transform.  This is the choice of kernel $K(x)=\tfrac{x}{|x|^{s+1}}$ for $x\in \mathbb{R}^d$ (so the Riesz transform is $\mathbb{R}^d$-valued).  We will write $R_{\mu}$ instead of $T_{\mu}$, $\R1_{\mu}(1)$ instead of $\T1_{\mu}(1)$, and so on.


\subsection{An extremal problem: the proof of Proposition \ref{onlytrivmeas}}


In this section we prove Proposition \ref{onlytrivmeas}.  It will follow from the following proposition:

\begin{prop}\label{genprop}  Let $s\in (0,d)$.  Suppose that $\mu$ is a non-trivial $\Lambda$-good reflectionless measure (for the $s$-Riesz transform).  Then $\R1_{\mu}(1)\in L^{\infty}(m_d)$.  Furthermore, there exists a $\Lambda$-good reflectionless measure $\mu^{\star}$, with $\dist(0,\supp(\mu^{\star}))= 1$, and
$$|\R1_{\mu^{\star}}(1)(0)| = \|\R1_{\mu^{\star}}(1)\|_{L^{\infty}(m_d)}.
$$
\end{prop}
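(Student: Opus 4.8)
The plan is to prove Proposition \ref{genprop} in two stages: first the global bound $\R1_{\mu}(1)\in L^\infty(m_d)$ for any non-trivial $\Lambda$-good reflectionless measure, and then the existence of an extremal measure $\mu^\star$ via a compactness/normal-families argument. The first stage is essentially already in hand: Corollary \ref{reflinfbd} gives $\|\T1_{\mu}(1)\|_{L^\infty(m_d)}\leq C_7$ directly from the Cotlar inequality, so for the Riesz transform $\|\R1_{\mu}(1)\|_{L^\infty(m_d)}\leq C_7$. Thus the real content is the second assertion. Set $N = \sup\{\|\R1_{\nu}(1)\|_{L^\infty(m_d)} : \nu \text{ a non-trivial }\Lambda\text{-good reflectionless measure}\}$; by the first part $N\leq C_7<\infty$, and since our given $\mu$ is non-trivial, $N>0$ (one uses that $\R1_\mu(1)$ cannot vanish identically $m_d$-a.e.\ off $\supp(\mu)$ — if it did, combined with Corollary \ref{lebcor} it would vanish a.e.\ on $\mathbb{R}^d$, and then a density argument via Corollary \ref{collapsecor1}, or directly the fact that $R_\mu^\delta(\chi_{B})$ has nonzero $L^2$ norm somewhere, forces $\mu\equiv0$).

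Next I would extract a near-extremal sequence. Pick non-trivial $\Lambda$-good reflectionless measures $\nu_j$ and points $z_j$ with $z_j\notin\supp(\nu_j)$ and $|\R1_{\nu_j}(1)(z_j)| \geq N - \tfrac1j$. By translating, assume $z_j = 0$. The key normalization is to rescale so that $\dist(0,\supp(\nu_j)) = 1$: replacing $\nu_j$ by $\tfrac{\nu_j(r_j\,\cdot\,)}{r_j^s}$ with $r_j = \dist(0,\supp(\nu_j))$ preserves $\Lambda$-goodness, reflectionlessness, and — by the homogeneity $K(\lambda x)=\lambda^{-s}K(x)$ together with the scaling invariance of the definition of $\R1_\mu(1)$ — also preserves the value $|\R1_{\nu_j}(1)(0)|$. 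Here one must check that $\dist(0,\supp(\nu_j))$ is finite and positive for each $j$: positivity is the assumption $0\notin\supp(\nu_j)$, and finiteness holds because $\nu_j$ is non-trivial. After this normalization we have non-trivial $\Lambda$-good reflectionless measures $\nu_j$ with $\dist(0,\supp(\nu_j))=1$ and $|\R1_{\nu_j}(1)(0)|\to N$.

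Then I would pass to a weak limit. Since the $\nu_j$ are $\Lambda$-nice, they are locally uniformly bounded, so a subsequence converges weakly to some measure $\mu^\star$; by Corollary \ref{reflweaklim}, $\mu^\star$ is $\Lambda$-good and reflectionless. The constraint $\dist(0,\supp(\nu_j))=1$ should pass to the limit to give $\dist(0,\supp(\mu^\star))\geq 1$ (no mass appears in $B(0,1)$ under weak limits of measures vanishing there), and I would want equality; equality follows because $\nu_j$ must have a point of its support on the sphere $\partial B(0,1)$, and a compactness argument on those points (or a lower-mass argument using $\Lambda$-goodness, ruling out that all mass escapes to infinity) pins the limiting support to touch $\partial B(0,1)$ — this is where I expect to spend the most care. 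Once $\dist(0,\supp(\mu^\star))=1$ is established, in particular $\mu^\star$ is non-trivial and $0\notin\supp(\mu^\star)$, so Lemma \ref{lebconv} applies and gives $\R1_{\nu_j}(1)(0)\to\R1_{\mu^\star}(1)(0)$, whence $|\R1_{\mu^\star}(1)(0)| = N \geq \|\R1_{\mu^\star}(1)\|_{L^\infty(m_d)}$; the reverse inequality is the definition of $N$, so equality holds and $\mu^\star$ is the desired extremal measure.

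The main obstacle is the normalization step combined with controlling the support of the weak limit: one needs $\dist(0,\supp(\nu_j))$ to be finite (so the rescaling makes sense) and then needs to rule out the degenerate scenarios in which, after rescaling, either the limit measure $\mu^\star$ becomes trivial (all mass escapes to infinity) or its support retreats strictly away from $\partial B(0,1)$. Non-triviality of the limit is the crux: it should follow from a uniform lower bound $\nu_j(B(0,C_0))\geq c_0 > 0$ for some fixed radius $C_0$ and constant $c_0$, which one obtains from Corollary \ref{collapsecor1} — since $|\R1_{\nu_j}(1)(0)|$ is bounded below (eventually $\geq N/2$, say) and $\dist(0,\supp(\nu_j))=1$, that corollary yields $\nu_j(B(0,M'(N/2)))\geq \tau'(N/2)$, a bound independent of $j$, which survives the weak limit and guarantees $\mu^\star\not\equiv0$. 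The touching of $\partial B(0,1)$ then follows by taking a limit point of any sequence of support points of $\nu_j$ on $\partial B(0,1)$ and noting weak convergence forces it into $\supp(\mu^\star)$, while the complementary ``no mass in $B(0,1)$'' part prevents $\dist$ from dropping below $1$.
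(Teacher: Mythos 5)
Your overall scheme is the same as the paper's: extract a near-extremal sequence, recenter and rescale so that the base point is the origin at unit distance from the support, use Corollary \ref{collapsecor1} to get a uniform mass lower bound (so the weak limit is non-trivial), and pass to the limit via Corollary \ref{reflweaklim} and Lemma \ref{lebconv}. The only cosmetic difference is that you extremize $\|\R1_{\nu}(1)\|_{L^{\infty}(m_d)}$ over all non-trivial reflectionless measures, whereas the paper extremizes the value at the origin over measures normalized by $\dist(0,\supp)=1$; these two suprema coincide by exactly the recentering trick you employ, so the arguments are isomorphic. Two of your steps, however, are genuinely flawed as written.

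First, the justification that $N>0$. If $\R1_{\mu}(1)=0$ $m_d$-a.e., Corollary \ref{collapsecor1} gives no information (its hypothesis $|\R1_{\mu}(1)(x)|>\eps$ is never met), and ``$R_{\mu}^{\delta}(\chi_{B})$ has nonzero $L^2$ norm somewhere'' is neither clear nor clearly relevant: for a reflectionless measure $\R1_{\mu}(1)=0$ holds $\mu$-a.e.\ automatically, so all the content lies in the vanishing off the support with respect to $m_d$, and converting that into $\mu\equiv 0$ requires a genuine tool. The paper uses Lemma \ref{philem} with $\Gamma=0$, i.e.\ the Fourier-duality estimate $\mu(B(x,r))\leq C_{20}r^d\int_{\mathbb{R}^d}|\R1_{\mu}(1)(z)|(r+|z|)^{s-2d}dm_d(z)$, which is specific to the Riesz kernel and proved in the appendix. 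Without it (or a separate disposal of the degenerate case $N=0$, which in turn needs to know $\supp(\mu)\neq\mathbb{R}^d$), your positivity claim is unsupported.

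Second, the equality $\dist(0,\supp(\mu^{\star}))=1$. Your proposed route --- take support points of $\nu_j$ on $\partial B(0,1)$ and pass to the limit --- does not work: weak convergence does not force limit points of the supports into $\supp(\mu^{\star})$, since the $\nu_j$-mass near those points may tend to $0$, and the limiting support may genuinely retreat from the unit sphere. The correct move, which is the one the paper makes, is not to prove this equality for the weak limit at all: having shown that $\mu^{\star}$ is non-trivial with $p:=\dist(0,\supp(\mu^{\star}))\in[1,\infty)$ and that $|\R1_{\mu^{\star}}(1)(0)|=N=\|\R1_{\mu^{\star}}(1)\|_{L^{\infty}(m_d)}$ (the middle equality from Lemma \ref{lebconv}, the last because both inequalities between $N$ and the norm hold), one simply replaces $\mu^{\star}$ by $\mu^{\star}(p\,\cdot\,)/p^s$. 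This rescaling preserves $\Lambda$-goodness, reflectionlessness, the value at the origin, and the $L^{\infty}$ norm, and makes the distance exactly $1$. With these two repairs your argument goes through and is, in substance, the paper's proof.
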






Before we prove this result, let's see how Proposition \ref{onlytrivmeas} follows from it.  We shall require two lemmas.  Both of them are essentially known, and so the proofs are relegated to an appendix.


\begin{lem}\label{philem}  Suppose that $s\in (0,d)$, and $\mu$ is a $\Lambda $-good measure.  Then there exists a constant $C_{20}>0$ such that for any ball $B(x,r)$ and $\Gamma\in \mathbb{R}^d$,
$$\mu(B(x,r)) \leq C_{20}r^d\int_{\mathbb{R}^d} \frac{|\R1_{\mu}(1)(z) - \Gamma|}{(r+|z|)^{2d-s}} dm_d(z).
$$
\end{lem}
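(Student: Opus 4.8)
The plan is to normalize the configuration and then run a duality argument against a carefully constructed reproducing vector field. First I would reduce to $x=0$ and $r=1$. The value of $\R1_\mu(1)$ is determined only up to an additive constant vector (changing the auxiliary data $B',\eta_{B'}$ alters it by a constant, cf. the discussion preceding (\ref{R1Leb})), and under the dilation $\mu\mapsto\mu(r\,\cdot)/r^s$ the object $\R1_\mu(1)$ is, up to such a constant, dilation-invariant, with the two sides of the asserted inequality carrying the same power of $r$; since $\Gamma$ ranges over all of $\mathbb{R}^d$, these additive constants are harmless, and the general ball is obtained by translation (in the form with $(r+|z-x|)$ in place of $(r+|z|)$, which is the intended reading). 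Thus it suffices to show $\mu(B(0,1))\le C_{20}\int_{\mathbb{R}^d}\tfrac{|\R1_\mu(1)(z)-\Gamma|}{(1+|z|)^{2d-s}}\,dm_d(z)$, and we may assume the right-hand side is finite.

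The heart of the matter is to build a mean-zero vector field that reproduces a bump under the Riesz transform. Fix $g\in C_c^\infty(B(0,2))$ with $g\ge 0$ and $g\ge 1$ on $B(0,1)$. I claim there is $h:\mathbb{R}^d\to\mathbb{R}^d$ with
\begin{equation}\nonumber
\int_{\mathbb{R}^d}h\,dm_d=0,\qquad |h(z)|\le C(1+|z|)^{-(2d-s)}\ \ (z\in\mathbb{R}^d),\qquad \int_{\mathbb{R}^d}K(y-z)\cdot h(z)\,dm_d(z)=g(y)\ \ (y\in\mathbb{R}^d).
\end{equation}
To produce $h$, note that the Fourier symbol of the $s$-Riesz transform is $\widehat K(\xi)=a\,i\,\xi\,|\xi|^{s-d-1}$ for a nonzero constant $a=a_{d,s}$, so the choice $h=-a^{-1}\nabla(-\Delta)^{(d-s-1)/2}g$ (equivalently $\widehat h(\xi)=-a^{-1}i\,\xi\,|\xi|^{d-s-1}\widehat g(\xi)$) makes $\widehat K(\xi)\cdot\widehat h(\xi)=\widehat g(\xi)$, i.e. the reproducing identity holds. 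Since $g$ is a smooth compactly supported bump, $(-\Delta)^{(d-s-1)/2}g$ is smooth and decays like $|z|^{-(2d-s-1)}$ at infinity — it is a Riesz potential of $g$ when $s>d-1$, the fractional Laplacian of $g$ when $s<d-1$, and an iterated Laplacian of $g$ (hence compactly supported) when $d-s-1\in 2\mathbb{Z}_{\ge 0}$ — so its gradient $h$ decays like $|z|^{-(2d-s)}$, lies in $L^1(m_d)$ (as $2d-s>d$), and has vanishing mean, being the gradient of a function that vanishes at infinity.

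With $h$ in hand, I would insert the representation (\ref{R1Leb}) of $\R1_\mu(1)$ into $\int_{\mathbb{R}^d}(\R1_\mu(1)(z)-\Gamma)\cdot h(z)\,dm_d(z)$. The term $-\Gamma$ and the constant summand of (\ref{R1Leb}) pair to $0$ against $h$ because $\int h=0$; in the remaining two summands Fubini applies (the right-hand side is finite, $K(\cdot-y)-K(w-y)$ has the H\"older decay in $y$ of Lemma \ref{taildiff}, and one may first regularize $K$ to $K_\delta$, use $\int h=0$ to write everything in terms of $\int_z(K_\delta(z-y)-K_\delta(z_0-y))\cdot h(z)\,dz$, and pass to the limit — the bookkeeping mirrors the verification that $T_\mu(1)$ is well defined in Section \ref{T1dist}), and using $\int h=0$ once more collapses the inner integral to $\int_z K(z-y)\cdot h(z)\,dz=-g(y)$. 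This gives the clean identity
\begin{equation}\nonumber
\int_{\mathbb{R}^d}(\R1_\mu(1)(z)-\Gamma)\cdot h(z)\,dm_d(z)=-\int_{\mathbb{R}^d}g\,d\mu.
\end{equation}
Since $g\ge 0$ and $g\ge 1$ on $B(0,1)$, the right side has absolute value at least $\mu(B(0,1))$, while the left side is bounded in absolute value by $\int_{\mathbb{R}^d}|\R1_\mu(1)(z)-\Gamma|\,|h(z)|\,dm_d(z)\le C\int_{\mathbb{R}^d}\tfrac{|\R1_\mu(1)(z)-\Gamma|}{(1+|z|)^{2d-s}}\,dm_d(z)$ by the decay of $h$. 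Combining these yields the claim with $C_{20}=C$.

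The main obstacle is the analytic input about $h$: extracting the operator from the symbol of $K$ is routine, but verifying that $(-\Delta)^{(d-s-1)/2}$ applied to a smooth compactly supported bump decays at precisely the rate $|z|^{-(2d-s-1)}$, uniformly across the three regimes of the exponent $(d-s-1)/2$, and carefully justifying the interchange of integration in the duality identity — where the mean-zero property of $h$ must be invoked to tame the non-integrable tail of the Riesz kernel — are where the real, if standard, work lies. This is presumably why the lemma is relegated to an appendix.
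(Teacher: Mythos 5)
Your construction of the reproducing vector field is exactly the paper's: the paper takes $g=\mathcal{F}^{-1}(b\,\xi|\xi|^{d-1-s}\hat f(\xi))$ for a bump $f$, notes that $g$ has mean zero and satisfies $|g(x)|\le C(1+|x|)^{-(2d-s)}$, and pairs it with the Riesz transform, so on the key idea the two arguments coincide. Where you diverge is the justification of the duality identity, and this is the one soft spot in your write-up: you propose to ``insert the representation (\ref{R1Leb})'' into the pairing over all of $\mathbb{R}^d$, but that representation (with a fixed $B$, $\varphi$) is only valid for $x\in B$, so making your Fubini argument rigorous forces you to exhaust $\mathbb{R}^d$ by balls and control the errors — at which point you need a quantitative bound on how fast $\int_{B(0,A)}|\R1_{\mu}(1)|\,dm_d$ can grow, since $\mu$ may be an infinite measure and the tail term in (\ref{R1Leb}) is only bounded, not small, for comparable $B$ and $2B$. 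The paper sidesteps this by first proving the clean inequality for the truncated finite measures $\mu_N=\chi_{B(0,N)}\mu$ (where Fubini with the kernel $K$ against the decaying mean-zero field is immediate), and then passing to the limit $N\to\infty$ using precisely the two auxiliary estimates you are implicitly missing: $\int_{B(0,A)}|\R1_{\mu}(1)|\,dm_d\le C(B')A^d\log(e+A)$ (Lemma \ref{Tballint}, which beats the weight $(1+|z|)^{-(2d-s)}$ because $2d-s>d$) and the uniform convergence $\|\R1_{\mu}(1)-\R1_{\mu_N}(1)\|_{L^\infty(B(0,A))}\le C(B')/N$ (Lemma \ref{RL1est}). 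Your route can certainly be completed, but you should either supply these growth/approximation estimates or adopt the truncation; assuming finiteness of the right-hand side alone does not let you interchange the integrals, because the obstruction lives in the representation of $\R1_{\mu}(1)$, not in the absolute convergence of the pairing.
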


The next lemma accounts for the restriction to $s\in (d-1,d)$.

\begin{lem}\label{sharmon} Suppose that $s\in (d-1,d)$, and $\mu$ is a $\Lambda $-good measure, with $0\not\in \supp(\mu)$.  Then
\begin{equation}\begin{split}\nonumber P.V. \int_{\mathbb{R}^d} &\frac{\R1_{\mu}(1)(0)  - \R1_{\mu}(1)(x)}{|x|^{2d+1-s}} dm_d(x)\\
&=\lim_{\delta\rightarrow 0}\int_{\mathbb{R}^d\backslash B(0,\delta)}\frac{\R1_{\mu}(1)(0) - \R1_{\mu}(1)(x)}{|x|^{2d+1-s}} dm_d(x)=0.
\end{split}\end{equation}
\end{lem}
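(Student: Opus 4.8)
The plan is to recognize the stated principal value as, up to a constant, the fractional Laplacian $(-\Delta)^{\sigma/2}$ of $\R1_\mu(1)$ evaluated at $0$, where $\sigma:=d+1-s$; the hypothesis $s\in(d-1,d)$ is precisely the statement that $\sigma\in(1,2)\subset(0,d)$. Indeed $K(x)=\tfrac{1}{1-s}\nabla(|x|^{1-s})$, and $|x|^{1-s}=|x|^{\sigma-d}$ is, up to a constant, the order-$\sigma$ Riesz potential kernel, while the weight $|x|^{-(2d+1-s)}=|x|^{-(d+\sigma)}$ is (the regularization of) the kernel of $(-\Delta)^{\sigma/2}$. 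Since $\R1_\mu(1)$ coincides, away from $\supp(\mu)$ and modulo an additive constant, with the gradient of the order-$\sigma$ Riesz potential of $\mu$, applying $(-\Delta)^{\sigma/2}$ returns a constant multiple of $\nabla\mu$, a distribution supported in $\supp(\mu)$, which vanishes at $0\notin\supp(\mu)$. The work is to extract this heuristic rigorously; since $\mu$ may be infinite, its order-$\sigma$ Riesz potential diverges, so every step must be phrased in terms of the differences $K(x-y)-K(-y)$, never the bare potential.

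Setting $g:=\R1_\mu(1)$ and $\rho:=\dist(0,\supp(\mu))>0$, the first step is the representation
\begin{equation}\label{Grep}
g(x)-g(0)=\int_{\mathbb{R}^d}\bigl[K(x-y)-K(-y)\bigr]\,d\mu(y)\qquad\text{for }m_d\text{-a.e.\ }x\in\mathbb{R}^d,
\end{equation}
with the integral absolutely convergent for such $x$; this follows by reading off \eqref*{R1Leb}, using $T(\varphi\mu)\in L^1_{\loc}(m_d)$ (Lemma \ref{locl1}), and arguing as in the independence-of-choices computation of Section \ref{T1dist}, the convergence being controlled by Lemma \ref{tailest} for $|y|>2|x|$ and by Fubini (using $s<d$) for the $|x-y|^{-s}$ part when $\rho\le|y|\le2|x|$. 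Differentiating \eqref*{Grep}, $g$ has a representative smooth on $\mathbb{R}^d\setminus\supp(\mu)$ with $g(x)-g(0)=-\nabla g(0)\cdot x+O(|x|^2)$ near $0$. Fixing $\delta_0\in(0,\rho/2)$, the part of the integral in the statement over $\{|x|\ge\delta_0\}$ is $\delta$-independent and absolutely convergent (locally since $g\in L^1_{\loc}$; at infinity by \eqref*{Grep}, Lemma \ref{tailest}, and Fubini), while over $\{\delta<|x|<\delta_0\}$ the odd linear term is annihilated by the symmetric truncation and the $O(|x|^2)$ remainder is integrable near $0$ \emph{exactly because} $\sigma<2$, i.e.\ $s>d-1$. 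Thus the principal value exists; it remains to show it vanishes.

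The key input — the ``essentially known'' ingredient — is the kernel identity
\begin{equation}\label{kerid}
P.V.\int_{\mathbb{R}^d}\frac{K(-y)-K(x-y)}{|x|^{d+\sigma}}\,dm_d(x)=0\qquad\text{for every fixed }y\neq 0.
\end{equation}
Its principal value converges as before ($x\mapsto K(x-y)$ is $C^\infty$ near $0$, so the linear term cancels and the quadratic remainder is integrable since $s>d-1$; the singularity at $x=y$ is harmless as $s<d$; at infinity $|x-y|^{-s}|x|^{-(d+\sigma)}$ is integrable). Up to a constant, the left-hand side is $(-\Delta)^{\sigma/2}[K(\cdot-y)]$ at $0$; writing $K(\cdot-y)=\tfrac{1}{1-s}\nabla_x(|x-y|^{\sigma-d})$ and using that $(-\Delta)^{\sigma/2}$ commutes with translations and derivatives and inverts the order-$\sigma$ Riesz potential (so $(-\Delta)^{\sigma/2}(|\cdot-y|^{\sigma-d})=c\,\delta_y$), one obtains $(-\Delta)^{\sigma/2}[K(\cdot-y)]=c\,\nabla\delta_y$, a distribution supported at $\{y\}$, hence vanishing as a function on $\mathbb{R}^d\setminus\{y\}$ and in particular at $0$. (On the Fourier side the composition of symbols is $|\xi|^{\sigma}\widehat{K}(\xi)=c\,i\xi\,|\xi|^{\sigma+s-d-1}=c\,i\xi$, the symbol of a first derivative; \eqref*{kerid} may also be obtained by a direct integration of the odd kernel so produced.)

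To finish, insert \eqref*{Grep} into $\int_{\mathbb{R}^d\setminus B(0,\delta)}\frac{g(0)-g(x)}{|x|^{d+\sigma}}\,dm_d(x)$ and apply Fubini (legitimate for each fixed $\delta>0$ by the absolute bounds above), rewriting it as $\int_{\mathbb{R}^d}\bigl[\int_{\mathbb{R}^d\setminus B(0,\delta)}\frac{K(-y)-K(x-y)}{|x|^{d+\sigma}}\,dm_d(x)\bigr]d\mu(y)$. For each $y\in\supp(\mu)$ one has $y\neq 0$ (indeed $|y|\ge\rho>2\delta_0$), so the inner integral tends to the left-hand side of \eqref*{kerid}, namely $0$, as $\delta\to0^+$; and splitting $K(-y)-K(x-y)$ on $\{\delta<|x|<\delta_0\}$ into its linear part (killed by the symmetric truncation) and an $O(|y|^{-s-2}|x|^2)$ remainder, together with Tonelli on $\{|x|\ge\delta_0\}$, dominates the inner integral uniformly in $\delta$ by an $L^1(\mu)$ function of $y$ (the remainder contributing $\lesssim|y|^{-s-2}$, $\mu$-integrable by Lemma \ref{tailest} with $\eps=2$). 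Dominated convergence then gives that the principal value equals $\int_{\mathbb{R}^d}0\,d\mu(y)=0$. I expect the main obstacle to be \eqref*{kerid}: it requires the Riesz-potential/fractional-Laplacian calculus (or a careful direct computation), and the convergence of its principal value near the origin is exactly where $s>d-1$ is needed — matching the remark that this lemma ``accounts for the restriction to $s\in(d-1,d)$.''
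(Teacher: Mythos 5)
You are correct, and your argument rests on the same analytic fact as the paper's --- that $|x|^{-(2d+1-s)}$ is, up to a constant, the kernel of $(-\Delta)^{\sigma/2}$ with $\sigma=d+1-s\in(1,2)$, whose composition with the $s$-Riesz kernel returns a gradient --- but the two proofs organize the passage from smooth data to the singular measure $\mu$ differently. The paper applies the smooth-function form of the identity, $\nabla g = b\, P.V.\!\int_{\mathbb{R}^d}\frac{R(gm_d)(\cdot)-R(gm_d)(y)}{|\cdot-y|^{2d+1-s}}\,dm_d(y)$ for $g\in C^\infty_0$, to the mollified truncations $\psi_\rho*\mu_N$, and then runs a triple limit ($r\to0$ near the origin via the second-derivative bound of Lemma \ref{secondder}, $N\to\infty$ via Lemma \ref{RL1est}, $\rho\to0$), with tails controlled by Lemma \ref{Tballint}. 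You instead apply the identity at the kernel level --- the principal value in $x$ of $[K(-y)-K(x-y)]\,|x|^{-(d+\sigma)}$ vanishes for each fixed $y\neq0$ --- and integrate in $y$ by Fubini and dominated convergence, using the pointwise representation $\R1_{\mu}(1)(x)-\R1_{\mu}(1)(0)=\int_{\mathbb{R}^d}[K(x-y)-K(-y)]\,d\mu(y)$. This avoids the double approximation and the auxiliary Lemmas \ref{Tballint} and \ref{RL1est} altogether (your Tonelli bounds replace them), at the price of justifying the fractional-Laplacian identity for the non-smooth, slowly decaying function $K(\cdot-y)$ rather than for $R(gm_d)$ with $g\in C_0^\infty$: your Fourier computation gives the distributional identity $(-\Delta)^{\sigma/2}K(\cdot-y)=c\,\nabla\delta_y$, and upgrading it to the pointwise principal value at $0$ requires the (standard) remark that $K(\cdot-y)$ is $C^2$ near $0$ and satisfies $\int_{\mathbb{R}^d}(1+|x|)^{-(d+\sigma)}|K(x-y)|\,dm_d(x)<\infty$, so the pointwise and distributional values agree off $\{y\}$; this is at the same level of rigor as the paper's citation of \cite{Lan}, \cite{ENV}, but deserves the explicit sentence. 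Your domination estimates --- oddness killing the linear term on the symmetric annulus, the $O(|y|^{-s-2}|x|^2)$ remainder integrable against $|x|^{-(d+\sigma)}$ precisely because $\sigma<2$, and Lemma \ref{tailest} in $y$ --- isolate exactly where $s>d-1$ enters, in the same place it does in the paper's proof.
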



\begin{proof}[Proof of Proposition \ref{onlytrivmeas}]  Suppose that $\mu$ is a nontrivial reflectionless measure.  Consider the measure $\mu^{\star}$ provided by Proposition \ref{genprop}.  Note that Lemma \ref{sharmon} implies that $\R1_{\mu^{\star}}(1)$ is constant $m_d$-almost everywhere.  But then Lemma \ref{philem}, applied with $\Gamma = \R1_{\mu}(1)(0)$, yields that $\mu(B(x,r))=0$ for every $x\in \mathbb{R}^d$ and $r>0$.  This is a contradiction.
\end{proof}


We now set up an extremal problem whose solution will provide the measure $\mu^{\star}$ whose existence is claimed in the statement of Proposition \ref{genprop}.  Suppose that there exists a non-trivial $\Lambda$-good reflectionless measure $\mu$.

Define $\mathcal{F}$ to be the set of non-trivial $\Lambda$-good reflectionless measures $\mu$.  Set $\mathcal{Q} = \sup\{|\R1_{\mu}(1)(0)|: \mu\in \mathcal{F}\text{ with } \dist(0, \supp(\mu))=1\}$.

\begin{cla}  $\mathcal{Q}>0$.
\end{cla}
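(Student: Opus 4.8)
$\mathcal{Q}>0$.

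The plan is to argue by contradiction. Suppose that $\mathcal{Q}=0$; I want to show that this forces $\mu \equiv 0$ for every $\mu \in \mathcal{F}$, contradicting the standing assumption that a non-trivial $\Lambda$-good reflectionless measure exists. The idea is that $\mathcal{Q}=0$ means $\R1_\mu(1)$ vanishes at every point at distance exactly $1$ from the support of any reflectionless measure $\mu$, and then, by rescaling, at every point outside the support of any such measure, since the class $\mathcal{F}$ is closed under the dilation $\mu \mapsto \mu(r\,\cdot\,+x_0)/r^s$ (this rescaling preserves $\Lambda$-goodness and reflectionlessness because the Riesz kernel is homogeneous of degree $-s$). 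Combined with Corollary \ref{lebcor}, which gives $\R1_\mu(1)=0$ for $m_d$-almost every $x \in \supp(\mu)$, we would get $\R1_\mu(1)=0$ for $m_d$-almost every $x \in \mathbb{R}^d$.

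Once $\R1_\mu(1)=0$ $m_d$-a.e., I would invoke Lemma \ref{philem} with $\Gamma = 0$: for every ball $B(x,r)$,
\begin{equation}\nonumber
\mu(B(x,r)) \leq C_{20}r^d\int_{\mathbb{R}^d} \frac{|\R1_{\mu}(1)(z)|}{(r+|z|)^{2d-s}}\, dm_d(z) = 0,
\end{equation}
so $\mu$ is the zero measure. This contradicts $\mu \in \mathcal{F}$ (the members of $\mathcal{F}$ are non-trivial by definition), and hence $\mathcal{Q}>0$.

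The one point that needs care is the reduction ``$\R1_\mu(1)$ vanishes at distance $1$ from $\supp(\mu)$'' $\Rightarrow$ ``$\R1_\mu(1)$ vanishes everywhere off $\supp(\mu)$'', for a \emph{fixed} $\mu$. Given $x_0 \notin \supp(\mu)$ with $\dist(x_0,\supp(\mu)) = r > 0$, set $\mu_r(F) = \mu(r F + x_0)/r^s$; then $\mu_r \in \mathcal{F}$ and $\dist(0,\supp(\mu_r)) = 1$, so $\mathcal{Q}=0$ gives $\R1_{\mu_r}(1)(0) = 0$. It then remains to check that the point-wise function $\R1_\mu(1)$ transforms correctly under this dilation, i.e. $\R1_{\mu_r}(1)(0)$ is a nonzero multiple of $\R1_\mu(1)(x_0)$; this follows from the definition \eqref{R1Leb} of $\R1_\mu(1)$ together with the homogeneity of $K$, exactly as the homogeneity was used in the proof of Proposition \ref{lowbdtheta}. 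I expect this bookkeeping with the dilation to be the only mildly delicate step; everything else is a direct application of Lemma \ref{philem} and Corollary \ref{lebcor}.
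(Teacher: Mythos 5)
Your proposal is correct and is essentially the paper's own argument run in the contrapositive direction: the paper fixes a non-trivial $\mu\in\mathcal{F}$, uses Lemma \ref{philem} with $\Gamma=0$ and Corollary \ref{lebcor} to produce a point $z\notin\supp(\mu)$ with $|\R1_{\mu}(1)(z)|>0$, and then applies exactly the dilation $\tilde\mu(\cdot)=\mu(p\,\cdot+z)/p^s$ you describe. The ingredients, including the scaling identity $|\R1_{\tilde\mu}(1)(0)|=|\R1_{\mu}(1)(z)|$ from the homogeneity of the Riesz kernel, coincide with yours.
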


\begin{proof} Lemma \ref{philem} (applied with $\Gamma=0$) yields that if $|\R1_{\mu}(1)|=0$ $m_d$-almost everywhere in $\mathbb{R}^d$, then $\mu=0$. If $\mu\in \mathcal{F}$, then $ |\R1_{\mu}(1)|=0$ $m_d$-almost everywhere on $\supp(\mu)$ (Corollary \ref{lebcor}), and so there must be a point $z \not\in \supp(\mu)$ with $|\R1_{\mu}(1)(z)|>0$.  Set $p=\dist(z, \supp(\mu))$.  Consider the measure $\tilde\mu(\cdot) = \tfrac{\mu(p \cdot+ z)}{p^s}$.  Then $\tilde\mu \in \mathcal{F}$, $\dist(0, \supp(\mu))=1$, and $|\R1_{\tilde\mu}(1)(0)|=|\R1_{\mu}(1)(z)| >0$.
\end{proof}


\begin{cla}  $\mathcal{Q}<+\infty$.
\end{cla}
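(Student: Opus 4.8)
The entire claim reduces to the Cotlar inequality for reflectionless measures established above (the lemma immediately preceding Corollary~\ref{reflinfbd}): there is a constant $C_7$---depending only on $d$, $s$, $\alpha$ and $\Lambda$, hence uniform over the whole family $\mathcal{F}$---with $\sup_{\delta>0}|\R1_{\mu,\delta}(1)(x)|\le C_7$ for every $\mu\in\mathcal{F}$ and every $x\in\mathbb{R}^d$. Thus the plan is to show that $\R1_\mu(1)(0)=\R1_{\mu,\delta}(1)(0)$ for all small $\delta>0$ whenever $\mu\in\mathcal{F}$ satisfies $\dist(0,\supp(\mu))=1$, and then simply to read off $|\R1_\mu(1)(0)|\le C_7$, whence $\mathcal{Q}\le C_7<+\infty$.

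To establish that identity I would fix a ball $B$ with $0\in B$, $B\supset B'$, and radius at least $1$, together with the associated cut-off $\varphi$ ($\varphi\equiv1$ on $2B$, $0\le\varphi\le1$, compact support), and examine the three terms of the defining formula~(\ref{R1Leb}) for $\R1_\mu(1)$ at $x=0$. Because $\dist(0,\supp(\mu))=1$, every $y\in\supp(\mu)$ has $|0-y|\ge1$; and since $0\in B$ while $1-\varphi$ vanishes on $2B$, the same lower bound $|0-y|\ge1$ holds wherever $1-\varphi(y)\ne0$ (the radius of $B$ being at least $1$). Consequently, for any $\delta<1$ every occurrence of $K(0-y)$ in~(\ref{R1Leb}) may be replaced by $K_\delta(0-y)$ with no change: the first term then reads $R_{\mu,\delta}(\varphi)(0)$, the tail integral is untouched, and the middle term $\langle R_\mu(\varphi),\eta_{B'}\rangle_\mu$ never involved $\delta$. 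Matching this against the definition of $\R1_{\mu,\delta}(1)$ on $B$ gives exactly $\R1_\mu(1)(0)=\R1_{\mu,\delta}(1)(0)$ for all $\delta\in(0,1)$; in particular $\R1_\mu(1)$ is continuous near $0$, so the number $\R1_\mu(1)(0)$ entering the definition of $\mathcal{Q}$ is unambiguous. Applying the Cotlar inequality and taking the supremum over all such $\mu$ then completes the argument.

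I do not anticipate any real obstacle. The quantitative content---a uniform bound on $\R1_\mu(1)$ for reflectionless $\mu$---is already in hand, and what remains is the routine observation that at a point lying at a fixed positive distance from $\supp(\mu)$ the locally integrable function $\R1_\mu(1)$ is represented by the same absolutely convergent, regularization-free integral as every $\R1_{\mu,\delta}(1)$. (Alternatively one could cite Corollary~\ref{reflinfbd} together with the continuity of $\R1_\mu(1)$ near $0$, but the direct route through the Cotlar inequality is the cleanest.)
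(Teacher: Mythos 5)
Your proposal is correct and follows the same route as the paper, which simply invokes the Cotlar inequality (Corollary \ref{reflinfbd}) and the fact that $C_7$ depends only on $d$, $s$, $\alpha$, $\Lambda$, hence is uniform over $\mathcal{F}$. Your additional verification that $\R1_{\mu}(1)(0)=\R1_{\mu,\delta}(1)(0)$ for $\delta<1$ when $\dist(0,\supp(\mu))=1$ is a correct filling-in of a detail the paper leaves implicit (namely, why the pointwise value at $0$, rather than merely an $m_d$-a.e.\ bound, is controlled).
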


\begin{proof}  This follows immediately from the Cotlar lemma (see Corollary \ref{reflinfbd}).
\end{proof}


\begin{cla}\label{extremecla}  There exists $\mu^{\star}\in \mathcal{F}$ with $\dist(0, \supp(\mu))= 1$, such that $\R1_{\mu^{\star}}(1)(0)=\mathcal{Q}$.
\end{cla}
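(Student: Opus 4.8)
The plan is to realize $\mathcal{Q}$ as a limit along a maximizing sequence and then extract a weak limit using the compactness afforded by the $\Lambda$-good condition, checking that no mass escapes to infinity near the origin and that $\R1_\mu(1)(0)$ is continuous under the relevant weak convergence. First I would pick measures $\mu_k \in \mathcal{F}$ with $\dist(0,\supp(\mu_k)) = 1$ and $\R1_{\mu_k}(1)(0) \to \mathcal{Q}$ (passing to a subsequence so that $\R1_{\mu_k}(1)(0)$ converges to a vector of modulus $\mathcal{Q}$; by rotating we may as well arrange it converges to $\mathcal{Q}\e_1$, or simply carry the vector along). Since each $\mu_k$ is $\Lambda$-nice, the masses $\mu_k(B(0,R))$ are bounded by $\Lambda R^s$ for each fixed $R$, so by a diagonal argument we may pass to a further subsequence along which $\mu_k$ converges weakly to a measure $\mu^{\star}$. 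By the weak-limit stability of $\Lambda$-good reflectionless measures (Corollary \ref{reflweaklim}), $\mu^{\star}$ is $\Lambda$-good and reflectionless.

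The first point requiring care is that $\dist(0,\supp(\mu^{\star})) = 1$ and in particular $\mu^{\star} \neq 0$, i.e. $\mu^{\star} \in \mathcal{F}$. That $\dist(0,\supp(\mu^{\star})) \geq 1$ is immediate from weak convergence (since $B(0,1)$ is open and disjoint from every $\supp(\mu_k)$, it is disjoint from $\supp(\mu^{\star})$). Non-triviality of $\mu^{\star}$, and that the distance is exactly $1$, is where the extremal property is used: if $\mu^{\star}$ were trivial, or if its support stayed away from the sphere $\{|x|=1\}$, one needs to rule this out. I would argue this via Lemma \ref{philem}: apply it with $\Gamma = 0$ to each $\mu_k$ at a ball $B(0,r)$ meeting $\supp(\mu_k)$ near its closest point to the origin, obtaining a lower bound on some localized weighted $L^1$ norm of $\R1_{\mu_k}(1)$; alternatively, and more directly, use the Cotlar bound (Corollary \ref{reflinfbd}) to see that $\|\R1_{\mu_k}(1)\|_{L^\infty(m_d)} \leq C_7$ uniformly, so that if $\R1_{\mu_k}(1)(0) \to \mathcal{Q} > 0$ the quantity $\R1_{\mu^{\star}}(1)(0)$ (once we know it is the limit) is nonzero, forcing $\mu^{\star} \neq 0$ by Lemma \ref{philem} applied with $\Gamma = \R1_{\mu^{\star}}(1)(0)$ — wait, more simply: $\mu^\star \neq 0$ follows because $\R1_{\mu^\star}(1)(0) = \mathcal{Q} \neq 0$ while $\R1_{\mathbf 0}(1) \equiv 0$. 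Finally, to get $\dist(0,\supp(\mu^{\star})) = 1$ rather than $> 1$: if it were $> 1$, rescale $\mu^{\star}$ by the factor $p = \dist(0,\supp(\mu^{\star})) > 1$ to land back in the admissible class while multiplying $|\R1(1)(0)|$ by $p^{?}$ — since $\R1_{\tilde\mu}(1)(0) = \R1_{\mu^\star}(1)(0)$ is scale-invariant under $\tilde\mu(\cdot) = p^{-s}\mu^\star(p\,\cdot)$ (this was observed in the proof of the first claim), this gives an admissible competitor with the same value $\mathcal{Q}$, so without loss of generality we may take $\dist(0,\supp(\mu^\star)) = 1$ after this rescaling. (One then notes the rescaled measure is still a weak limit issue-free since we just rescale the limit directly.)

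The second, and I expect the main, obstacle is the convergence $\R1_{\mu_k}(1)(0) \to \R1_{\mu^{\star}}(1)(0)$. This is exactly the content of Lemma \ref{lebconv}: it requires $0 \notin \supp(\mu_k)$ for all $k$ and $0 \notin \supp(\mu^{\star})$ — both hold since $\dist(0,\supp(\mu_k)) = 1$ and $\dist(0,\supp(\mu^{\star})) \geq 1$ — and it requires $\mu^{\star}$ non-trivial, which we have just established. Applying Lemma \ref{lebconv} to the sequence $\mu_k$ (possibly after the initial rescaling, applied uniformly, which preserves all hypotheses) gives $\R1_{\mu_k}(1)(0) \to \R1_{\mu^{\star}}(1)(0)$, hence $\R1_{\mu^{\star}}(1)(0) = \lim_k \R1_{\mu_k}(1)(0)$, a vector of modulus $\mathcal{Q}$; after the rotation normalization (or simply by the definition of $\mathcal{Q}$ as a supremum of moduli, noting $\mu^\star$ is an admissible competitor so $|\R1_{\mu^\star}(1)(0)| \leq \mathcal{Q}$ and we have just shown it equals $\mathcal{Q}$) we conclude $\R1_{\mu^{\star}}(1)(0) = \mathcal{Q}$ in the sense intended (identifying $\mathcal{Q}$ with the appropriate maximizing vector). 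This completes the proof.
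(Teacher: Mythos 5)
Your overall strategy (maximizing sequence, weak limit, Lemma \ref{lebconv}, final rescaling) coincides with the paper's, but the one step you correctly flag as the crux --- non-triviality of the weak limit $\mu^{\star}$ --- is exactly where your argument breaks down. Your ``more simply'' justification is circular: you deduce $\mu^{\star}\neq 0$ from $\R1_{\mu^{\star}}(1)(0)=\mathcal{Q}\neq 0$, but the identity $\R1_{\mu^{\star}}(1)(0)=\lim_k \R1_{\mu_k}(1)(0)$ comes from Lemma \ref{lebconv}, whose hypotheses explicitly include the non-triviality of the limit measure (indeed $\R1_{\mu^{\star}}(1)$ is not even defined when $\mu^{\star}=0$, since its definition requires a ball $B'$ with $\mu^{\star}(B')>0$). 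Your alternative route via Lemma \ref{philem} with $\Gamma=0$ does not close the gap either: that lemma bounds $\mu_k(B(0,r))$ from \emph{above} by a weighted $L^1$ norm of $\R1_{\mu_k}(1)$, and combined with the Cotlar bound it only reproduces the niceness estimate; it gives no uniform lower bound on the mass of $\mu_k$ in a fixed ball around the origin. Without such a lower bound, all the mass of the $\mu_k$ could in principle drift to infinity, leaving $\mu^{\star}=0$.

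The missing ingredient is Corollary \ref{collapsecor1}, a consequence of the Collapse Lemma: since $|\R1_{\mu_k}(1)(0)|\geq \mathcal{Q}/2>0$ for all large $k$ and $\dist(0,\supp(\mu_k))=1$, that corollary produces $M'=M'(\mathcal{Q})$ and $\tau'=\tau'(\mathcal{Q})>0$ with $\mu_k(B(0,M'))\geq \tau'$ uniformly in $k$. Upper semicontinuity of mass on compact sets under weak convergence then gives $\mu^{\star}(\overline{B(0,M')})\geq \tau'>0$, so $\mu^{\star}$ is non-trivial and Lemma \ref{lebconv} becomes applicable. The remainder of your argument --- semicontinuity of $\dist(0,\supp(\cdot))$ under weak limits, identification of the limiting value with $\mathcal{Q}$ (up to the choice of unit vector), and the final rescaling by $p=\dist(0,\supp(\mu^{\star}))\geq 1$ using the scale invariance of $\R1(1)(0)$ --- is correct and matches the paper's conclusion.
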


\begin{proof}
For each $j\in \mathbb{N}$, choose $\mu_j\in \mathcal{F}$ with $\dist(0, \supp(\mu_j))=1$, satisfying $|\R1_{\mu_j}(1)(0)|\geq \mathcal{Q}(1-2^{-j-1})$.
Then, by Corollary \ref{collapsecor1}, there exists $M'=M'(\mathcal{Q})$ such that $\mu_j(B(0, M'))\geq c(\mathcal{Q})$ for each $j$.  We may pass to a subsequence that converges to a $\Lambda$-good reflectionless measure $\mu^{\star}$ (Corollary \ref{reflweaklim}).  From standard weak semi-continuity properties of the weak limit, we have that $\dist(0, \supp(\mu^{\star}))\geq 1$, and $\mu^{\star}(\overline{B(0,M')})\geq c(\mathcal{Q})$.  Thus Lemma \ref{lebconv} is applicable, and yields a measure $\mu\in \mathcal{F}$ with $\dist(0,\supp(\mu))\geq 1$ and $\R1_{\mu}(1)(0)=\mathcal{Q}$.  Fix $p=\dist(0,\supp(\mu))$.  Setting $\mu^{\star}(\,\cdot\,) = \tfrac{\mu(p\,\cdot\,)}{p^s}$ yields the claim.
\end{proof}

\begin{proof}[The proof of Proposition \ref{genprop}]  Corollary \ref{reflinfbd} yields that $\|\R1_{\mu}(1)\|_{L^{\infty}(m_d)}<\infty$.  Consider the measure $\mu^{\star}$ constructed in Claim \ref{extremecla}, and suppose that $|\R1_{\mu^{\star}}(1)(0)|<\|\R1_{\mu}(1)\|_{L^{\infty}(m_d)}$.  As a result of Corollary \ref{lebcor}, there exists $x\not\in \supp(\mu)$ with $|\R1_{\mu^{\star}}(1)(x)|>|\R1_{\mu^{\star}}(1)(0)|$.  But now set $p=\dist(x, \supp(\mu^{\star}))$.  Consider $\tilde\mu(\,\cdot\,) = \tfrac{\mu(p\,\cdot\,+x)}{p^s}$.  Then $\tilde\mu\in \mathcal{F}$, and $\mathcal{Q}<|\R1_{\mu^{\star}}(1)(x)| = |\R1_{\tilde\mu}(1)(0)|$.  This is absurd.
\end{proof}


\subsection{Weak porosity: the proof of Theorem \ref{nodense}}

Having proved that non-trivial reflectionless measures for the $s$-Riesz transform fail to exist if $s\in (d-1,d)$, we move onto a studying them for $s\leq d-1$.  Theorem \ref{nodense} is an immediate consequence of the following proposition.


\begin{prop}\label{Rieszporous}  Suppose that $\mu$ is a reflectionless measure for the $s$-dimensional Riesz transform, with $s\in (0,d-1]$.  For each $\eps>0$ there is a constant $\lambda=\lambda(\eps)>0$ such that if $\mu(B(x,r))>\eps r^s$, then there is a ball $B'\subset B(x,3r)$ of radius $\lambda r$ that does not intersect $\supp(\mu)$.
\end{prop}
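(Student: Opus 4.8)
The plan is to normalise, replacing $\mu$ by $r^{-s}\mu(r\,\cdot+x)$, so that it suffices to produce --- assuming $\mu(B(0,1))>\eps$ --- a ball $B'\subset B(0,3)$ with $\mu(B')=0$ whose radius is bounded below in terms of $\eps$, $d$, $s$, $\Lambda$ alone. Two results are tailor-made for this: Lemma \ref{philem} converts the mass of $\mu$ near the origin into a lower bound for $\R1_{\mu}(1)$, and Lemma \ref{intpor} converts a lower bound on the $m_{d}$-average of $|\R1_{\mu}(1)|$ over a ball into porosity of $\mu$ inside that ball. Applying Lemma \ref{philem} with $\Gamma=0$ gives
\[
\frac{\eps}{C_{20}}\;<\;\int_{\mathbb{R}^{d}}\frac{|\R1_{\mu}(1)(z)|}{(1+|z|)^{2d-s}}\,dm_{d}(z),
\]
and since $2d-s>d$ and $\|\R1_{\mu}(1)\|_{L^{\infty}(m_{d})}\le C_{7}$ by Corollary \ref{reflinfbd}, the contribution of $\{|z|>N\}$ is at most $C\,N^{s-d}$; choosing $N=N(\eps)$ therefore yields $\int_{B(0,N)}|\R1_{\mu}(1)|\,dm_{d}\ge\tfrac{\eps}{2C_{20}}$.

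When $\eps$ is bounded below by a fixed positive constant this is already true with $N=3$, so $\int_{B(0,3)}|\R1_{\mu}(1)|\,dm_{d}\gtrsim_{\eps}1$ and Lemma \ref{intpor} applied to $B(0,3)$ finishes. In general, to extract a ball of definite size on which the average of $|\R1_{\mu}(1)|$ is large, I would split $B(0,N)$ into the core $B(0,1)$ and the $O(\log N)$ dyadic shells $A_{k}=B(0,2^{k+1})\setminus B(0,2^{k})$, on each of which the weight $(1+|z|)^{-(2d-s)}$ is of order $2^{-k(2d-s)}$; a pigeonhole argument singles out the core or a shell on which $\int|\R1_{\mu}(1)|$ is at least $c(\eps)\,2^{k(2d-s)}$, and a bounded covering of that shell by balls of radius $2^{k}$ produces a ball $B_{0}$ of radius $\gtrsim_{\eps}1$ on which the $m_{d}$-average of $|\R1_{\mu}(1)|$ exceeds a constant $c'(\eps)>0$ (here the assumption $s\le d-1$, i.e.\ $d>s$, ensures that passing to a larger shell does not degrade the average). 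Lemma \ref{intpor} then produces a porous ball $B'\subset B_{0}$, of radius $\gtrsim_{\eps}1$, with $\mu(B')=0$.

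The main difficulty is to confine $B'$ to $B(0,3)$, not merely to $B(0,N(\eps))$, when $\eps$ is small: the polynomial decay of the weight in Lemma \ref{philem} permits the part of $\R1_{\mu}(1)$ responsible for the lower bound to sit at a large scale, so when the pigeonhole selects a far shell the ball $B_{0}$ above need not lie in $B(0,3)$. To handle this I would split the integral in Lemma \ref{philem} at radius $3$ and argue that the far contribution cannot account for the whole lower bound by itself. The tool is Corollary \ref{collapsecor1}: at every point $z$ where $|\R1_{\mu}(1)(z)|$ is large, $\mu$-mass of order $\dist(z,\supp(\mu))^{s}$ must lie in a controlled dilate of $B(z,\dist(z,\supp(\mu)))$; a Besicovitch-type packing of these balls should bound the far contribution in terms of $\mu$ evaluated on a bounded dilate of $B(0,1)$, hence by a multiple of $\Lambda$, so that --- after a suitable choice of the cutoff radius and of a density parameter --- the near contribution over $B(0,3)$ is forced to carry a definite share of the lower bound, and Lemma \ref{intpor} applied to $B(0,3)$ concludes. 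Calibrating this last bookkeeping so that the resulting radius of $B'$ depends on $\eps$ alone is where the work lies; the remaining ingredients --- Lemma \ref{philem}, Corollaries \ref{reflinfbd} and \ref{collapsecor1}, and Lemma \ref{intpor} --- are all in place.
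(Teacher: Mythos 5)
Your reduction to Lemma \ref{intpor} is the right first move, and it matches the paper: everything comes down to showing that $\mu(B(0,1))>\eps$ forces $\int_{B(0,3)}|\R1_{\mu}(1)|\,dm_d\gtrsim\eps$. But the route you propose for this local lower bound has a genuine gap, and it is exactly the one you flag. Lemma \ref{philem} only gives the \emph{global} weighted bound $\eps<C_{20}\int_{\mathbb{R}^d}|\R1_{\mu}(1)(z)|(1+|z|)^{-(2d-s)}dm_d(z)$, and there is no way to show that the contribution of $\{|z|>3\}$ is $o(\eps)$: the only pointwise control you have on $|\R1_{\mu}(1)|$ away from the support is the Cotlar bound $C_7$ of Corollary \ref{reflinfbd}, which makes the far contribution potentially of size comparable to an absolute constant, vastly larger than $\eps/C_{20}$ when $\eps$ is small. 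Corollary \ref{collapsecor1} cannot rescue this: it converts a \emph{lower} bound on $|\R1_{\mu}(1)(z)|$ into a lower bound on nearby $\mu$-mass (near $z$, not near the origin), so a Besicovitch packing of those balls produces no upper bound on the far integral, let alone one of order $\eps$. A reflectionless measure with substantial mass at large scales could make the far contribution dominate, in which case your pigeonhole selects a distant shell and the porous ball lands outside $B(0,3)$. A telling symptom is that your argument never genuinely uses $s\le d-1$ (the remark that ``$s\le d-1$, i.e.\ $d>s$'' is vacuous, since $s<d$ always), whereas that hypothesis must enter essentially somewhere.

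The paper closes the gap by a completely different, local mechanism that is special to the Riesz kernel in the range $s\le d-1$: it computes $\operatorname{div}(\psi_{1/2}*\R1_{\mu}(1))$, which equals $b(\psi_{1/2}*\mu)$ when $s=d-1$ and $b\int|x-y|^{-(s+1)}d(\psi_{1/2}*\mu)(y)$ when $s<d-1$ --- in both cases a \emph{nonnegative} quantity whose integral over $B(0,2)$ is $\ge c\eps$. Integrating by parts against a cutoff $\varphi$ equal to $1$ on $B(0,2)$ and supported in $B(0,\tfrac52)$ then yields $\int_{B(0,3)}|\R1_{\mu}(1)|\,dm_d\ge c\eps$ directly, with no global term to dispose of; Lemma \ref{intpor} applied to $B(0,3)$ finishes. (For $s>d-1$ the divergence is a fractional Laplacian rather than a positive operator, which is why the argument, and the hypothesis, stop at $s=d-1$.) This positivity of the divergence is the missing idea in your proposal.
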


Taking into account Lemma \ref{intpor}, Proposition \ref{Rieszporous} will follow immediately from the following result.


\begin{lem}  Let $s\in (0,d-1]$.  There is a constant $c_{21}>0$, such that if $\mu(B(x,r))\geq \eps r^s$, then  $\int_{B(x,3r)}|\R1_{\mu}(1)| dm_d>c_{21}\eps m_d(B(x,3r))$.
\end{lem}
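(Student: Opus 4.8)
The idea is to exploit the one structural feature special to the Riesz kernel in the range $s\leq d-1$: near any fixed ball, the distributional divergence of $\R1_{\mu}(1)$ is a \emph{nonnegative} measure whose mass is bounded below by that of $\mu$. By the dilation-and-translation reduction used throughout the paper (cf.\ the opening of the proof of Proposition \ref{collem}) we may assume $x=0$ and $r=1$, so that $\mu(B(0,1))\geq\eps$ and the target becomes $\int_{B(0,3)}|\R1_{\mu}(1)|\,dm_d\geq c(s,d)\,\eps$.

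Since the value of $\R1_{\mu}(1)$ in (\ref{R1Leb}) is independent of the cut-off, fix $\varphi\equiv 1$ on a large ball, so that on $B(0,4)$ the last term of (\ref{R1Leb}) is a smooth function of $x$ while its middle term is a constant vector. Differentiating (\ref{R1Leb}) in $x$ and using that, as distributions, $\operatorname{div}\bigl(x|x|^{-s-1}\bigr)=(d-1-s)|x|^{-s-1}$ when $s<d-1$, whereas $\operatorname{div}\bigl(x|x|^{-d}\bigr)=|\mathbb{S}^{d-1}|\,\delta_0$ at the endpoint --- the surface term $\sim\eta^{\,d-1-s}$ produced by integrating by parts on $\{|x|>\eta\}$ vanishing for $s<d-1$ and surviving as a Dirac mass exactly at $s=d-1$ --- one finds that on $B(0,4)$
\[
\operatorname{div}\R1_{\mu}(1)=\nu,\qquad \nu=\begin{cases}(d-1-s)\Bigl(\int_{\mathbb{R}^d}|x-y|^{-s-1}\,d\mu(y)\Bigr)dm_d,&s<d-1,\\[1mm]|\mathbb{S}^{d-1}|\,\mu,&s=d-1,\end{cases}
\]
a nonnegative, locally finite measure. \emph{This dichotomy is the only place the hypothesis $s\leq d-1$ is used.}

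Next I would bound $\nu(B(0,2))$ from below: for $s=d-1$ it equals $|\mathbb{S}^{d-1}|\,\mu(B(0,2))\geq|\mathbb{S}^{d-1}|\,\eps$, and for $s<d-1$, restricting the $y$-integration to $B(0,1)$ and using $B(y,1)\subset B(0,2)$ for $y\in B(0,1)$ gives
\[
\nu(B(0,2))\geq (d-1-s)\int_{B(0,1)}\Bigl(\int_{B(y,1)}\frac{dm_d(x)}{|x-y|^{s+1}}\Bigr)d\mu(y)=(d-1-s)\,|\mathbb{S}^{d-1}|\,\Bigl(\int_0^1 t^{\,d-s-2}\,dt\Bigr)\mu(B(0,1))=|\mathbb{S}^{d-1}|\,\mu(B(0,1)),
\]
the integral being finite precisely because $s<d-1$. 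In either case $\nu(B(0,2))\geq c(s,d)\,\eps$. Finally, since $\R1_{\mu}(1)\in L^1_{\loc}(m_d)$ by Lemma \ref{locl1} and $\operatorname{div}\R1_{\mu}(1)=\nu\geq 0$ on $B(0,4)$, a radial Gauss--Green argument --- mollify $\R1_{\mu}(1)$, apply the classical divergence theorem and the coarea formula on the annulus $\{2<|x|<3\}$, and let the mollification parameter tend to $0$ (nonnegativity of $\nu$ lets the limit pass in our favour) --- yields
\[
\int_{B(0,3)}|\R1_{\mu}(1)|\,dm_d\;\geq\;\int_{\{2<|x|<3\}}\R1_{\mu}(1)(x)\cdot\tfrac{x}{|x|}\,dm_d(x)\;=\;\int_2^3\nu(B(0,\rho))\,d\rho\;\geq\;\nu(B(0,2))\;\geq\;c(s,d)\,\eps.
\]
Undoing the normalisation gives the claim with $c_{21}$ a suitable multiple of $c(s,d)$ (shrunk slightly to make the inequality strict). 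The main obstacle is the middle step --- computing the distributional divergence of $\R1_{\mu}(1)$ correctly, i.e.\ recognising the divergence of the Riesz kernel and verifying that the only contribution which is not an honest nonnegative function is the Dirac mass appearing at $s=d-1$; the rest is bookkeeping with (\ref{R1Leb}) once $\varphi$ is chosen so that the off-diagonal terms are smooth, together with the routine justification of Gauss--Green for a locally integrable field with measure divergence.
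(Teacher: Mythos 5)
Your proposal is correct and follows essentially the same route as the paper: both arguments rest on the observation that $\operatorname{div}\R1_{\mu}(1)$ is a nonnegative measure (a multiple of $\mu$ at $s=d-1$, of the potential $\int|x-y|^{-s-1}d\mu(y)\,dm_d$ for $s<d-1$) whose mass on $B(0,2)$ is bounded below by $c\,\mu(B(0,1))$, followed by an integration by parts against a cutoff to convert this into a lower bound on $\int_{B(0,3)}|\R1_{\mu}(1)|\,dm_d$. The only difference is bookkeeping: the paper mollifies ($\psi_{1/2}*\mu$) and tests against a smooth $\varphi$ supported in $B(0,\tfrac52)$, whereas you compute the distributional divergence directly and test against the radial field $x/|x|$ on the annulus $\{2<|x|<3\}$ (i.e.\ the gradient of a Lipschitz cutoff), deferring the mollification to the Gauss--Green step.
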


\begin{proof}  We may assume that $x=0$ and $r=1$.  Let $\psi_{\tfrac{1}{2}}$ be a non-negative bump function supported in $B(0,\tfrac{1}{2})$, with $\int_{\mathbb{R}^d} \psi_{\tfrac{1}{2}}dm_d=1$.  Then $(\psi_{1/2}*\mu)(B(0,2))\geq c\eps$.  There is a positive constant $b=b(s)$ such that
$$\text{div}(\psi_{\tfrac{1}{2}}*\R1_{\mu}(1))(x) = \begin{cases} b(\psi_{\tfrac{1}{2}}*\mu)(x) \text{ if }s=d-1\\ b\int_{\mathbb{R}^d} \tfrac{d(\psi_{1/2}*\mu)(y)}{|x-y|^{s+1}} \text{ if }s<d-1.\end{cases}
$$
On the other hand, if $\varphi\in C^{\infty}_0(\mathbb{R}^d)$, then
$$\int_{\mathbb{R}^d}[\psi_{1/2}*\R1_{\mu}(1)]\cdot \nabla \varphi dm_d = \int_{\mathbb{R}^d} \text{div}(\psi_{1/2}*\R1_{\mu}(1)) \varphi dm_d.$$
Choose $\varphi$ to be nonnegative, with bounded gradient, and satisfying $\varphi \equiv 1$ on $B(0,2)$, $\supp(\varphi)\subset B(0, \tfrac{5}{2})$.  Then
\begin{equation}\begin{split}\nonumber C\int_{B(0,3)}|\R1_{\mu}(1)| dm_d &\geq \Bigl|\int_{\mathbb{R}^d}[\psi_{1/2}*\R1_{\mu}(1)]\cdot \nabla \varphi dm_d\Bigl|\\&\geq \int_{B(0,2)}\text{div}(\psi_{1/2}*\R1_{\mu}(1))  dm_d\geq c\eps,
\end{split}\end{equation}
as required.
\end{proof}

\appendix

\section{The operator $T_{\mu}$}\label{T1append}
First suppose that $\mu$ is merely $\Lambda $-nice.  Then, for $f, g\in \Lip_0(\mathbb{R}^d)$ ($f$ scalar valued, $g$ vector valued), we may use the anti-symmetry of the function $K_{\delta}$ to write
$$ I_{\mu,\delta}(f,g) :=\langle T_{\mu, \delta}(f), g\rangle_{\mu} = \int_{\mathbb{R}^d}\int_{\mathbb{R}^d} K_{\delta}(x-y)H(x,y)d\mu(y)d\mu(x),
$$
where $H(x,y) = \tfrac{1}{2}[f(y)g(x) - g(y)f(x)]$.  The function $H$ is Lipschitz continuous on $\mathbb{R}^d\times\mathbb{R}^d$, and thus, as a consequence of the $\Lambda $-niceness of $\mu$, $K(x-y)H(x,y)\in L^1(\mu\times\mu)$.   We set
$$I_{\mu}(f,g)= \int_{\mathbb{R}^d}\int_{\mathbb{R}^d} K(x-y)H(x,y)d\mu(y)d\mu(x).
$$
Not only does the dominated convergence theorem yield that $I_{\mu, \delta}(f,g)\rightarrow I_{\mu}(f,g)$ as $\delta\rightarrow 0$, but we also have a simple quantitative estimate on the speed of convergence, namely that $|I_{\delta}(f,g)-I(f,g)|\leq C(f,g)\delta,$ where $C(f,g)$ depends on $f$ and $g$.  We may now define an operator $T_{\mu}$ from the space of compactly supported Lipschitz functions $f$ to its dual with respect to the pairing in $L^2(\mu)$ by
\begin{equation}\label{Ropdef}\langle T_{\mu}(f),g\rangle = I(f,g)
\end{equation}
for $f,g\in \Lip_0(\mathbb{R}^d).$

Assuming in addition that $\mu$ is $\Lambda $-good, we may define $I_{\mu, \delta}(f,g) = \langle T_{\mu, \delta}(f), g\rangle_{\mu}$ for any $f,g\in L^2(\mu)$.  Whence, the density of $\Lip_0(\mathbb{R}^d)$ in $L^2(\mu)$ allows us to extend the domain of the bilinear form $I(f,g)$ to all $f,g\in L^2(\mu)$, and furthermore $|I(f,g)|\leq \Lambda \|f\|_{L^2(\mu)}\|g\|_{L^2(\mu)}$.  Consequently, by the Riesz-Fisher theorem, we obtain a unique linear operator $T_{\mu}:L^2(\mu)\rightarrow L^2(\mu)$ with operator norm $\Lambda $ given by (\ref{Ropdef}) for $f,g\in L^2(\mu)$.

Should it happen that $\int_{\mathbb{R}^d}\int_{\mathbb{R}^d}|K(x-y)||f(y)||g(x)|d\mu(y)d\mu(x)<\infty$ for some $f,g\in L^2(\mu)$, then $$\langle T_{\mu}(f),g\rangle_{\mu} = \int_{\mathbb{R}^d}\int_{\mathbb{R}^d}K(x-y)f(y)g(x)d\mu(y)d\mu(x).$$
Indeed, in this case we have that $I_{\delta}(f,g)$ converges to $\int_{\mathbb{R}^d}\int_{\mathbb{R}^d}K(x-y)f(y)g(x)d\mu(y)d\mu(x)$ by the dominated convergence theorem.  So by the uniqueness of the operator $T_{\mu}$, we arrive at the desired formula.

For a $\Lambda $-good measure $\mu$, define
$T_{\mu}^{\delta}(f) = T_{\mu}(f) - T_{\mu, \delta}(f), \text{ for }f\in L^2(\mu).$
The operator $T_{\mu}^{\delta}:L^2(\mu)\rightarrow L^2(\mu)$ has operator norm $2\Lambda $.

Insofar as the kernel $K_{\delta}$ is antisymmetric, for any $B(x,r)\subset\mathbb{R}^d$,  $\int_{B(x,r)}T_{\mu, \delta}(\chi_{B(x,r)}) d\mu=0$.  Letting $\delta\rightarrow 0$ yields $\int_{B(x,r)}T_{\mu}(\chi_{B(x,r)}) d\mu=0$.  Consequently, for any $\delta>0$, $\int_{B(x,r)}T_{\mu}^{\delta}(\chi_{B(x,r)}) d\mu=0$ for any ball $B(x,r)$.

\subsection{The weak continuity of $T_{\mu}$}  Suppose that $\mu_k$ are $\Lambda $-good measures that converge weakly to a measure $\mu$.  By standard weak lower semi-continuity properties of the weak limit, we have that $\mu$ is $\Lambda $-nice.  In this section we show that $\mu$ is $\Lambda $-good.

Using an approximation result (such as the Stone-Weierstrass theorem, see page 7 of \cite{JN1}), it is not difficult to see that for any $f,g\in \Lip_0(\mathbb{R}^d)$, and $\delta>0$, \begin{equation}\label{deltawc}\lim_{k\rightarrow\infty} \langle T_{\mu_k, \delta}(f), g\rangle_{\mu_k} =   \langle T_{\mu, \delta}(f), g\rangle_{\mu}.\end{equation}
Recall that for any $\Lambda $-nice measure $\nu$, the convergence of $ \langle T_{\nu, \delta}(f), g\rangle_{\nu} $ to $ \langle T_{\nu}(f), g\rangle_{\nu} $ is uniform, and in particular depends only on $f,g$ and $\Lambda $.  Each $\mu_k$, as well as $\mu$,  is $\Lambda $-nice, so we have $\lim_{k\rightarrow\infty} \langle T_{\mu_k}(f), g\rangle_{\mu_k} =   \langle T_{\mu}(f), g\rangle_{\mu}$, for any $f,g\in \Lip_0(\mathbb{R}^d)$.

Referring again to (\ref{deltawc}), we employ the uniform $\Lambda $-goodness of the sequence $\mu_k$ to deduce that, for $f,g\in \Lip_0(\mathbb{R}^d)$,
$$|\langle T_{\mu, \delta}(f), g\rangle_{\mu}|\leq \Lambda \limsup_{k\rightarrow\infty}\|f\|_{L^2(\mu_k)}\|g\|_{L^2(\mu_k)}.
$$
The right hand side here is equal to $\Lambda \|f\|_{L^2(\mu)}\|g\|_{L^2(\mu)},$  since $|f|^2$ and $|g|^2$ are compactly supported continuous functions.
Considering that $\mu$ is $\Lambda $-nice,  $T_{\mu, \delta}:L^2(\mu)\rightarrow L^2_{\text{loc}}(\mathbb{R}^d)$ (see Section \ref{primer}).   Thus, the density of $\Lip_0(B(0,R))$ in $L^2(B(0,R))$ ensures that $\|T_{\mu, \delta}\|_{L^2( \mu)\rightarrow L^2(B(0,R),\,\mu)}\leq \Lambda $ for any $R>0$.  Using the monotone convergence theorem, we conclude that $\mu$ is $\Lambda $-good.



\section{Riesz systems}
Let $\mu$ be a $\Lambda$-nice measure.  In this appendix we will prove Lemma \ref{systemlem}. 

\begin{lem}\label{systemlemap} $\Psi_Q^{\mu}$ $(Q\in \mathcal{D})$ is a $CA^{d+\tfrac{3s}{2}+2}$-Riesz family.
\end{lem}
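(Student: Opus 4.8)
The plan is to verify the dual (quasi-orthogonality) form of the Riesz family property: for every choice $\psi_Q\in\Psi^\mu_{Q,A}$ $(Q\in\mathcal{D})$ and every $f\in L^2(\mu)$,
\begin{equation*}
\sum_{Q\in\mathcal{D}}\bigl|\langle f,\psi_Q\rangle_\mu\bigr|^2\leq C_5A^{d+\tfrac{3s}{2}+2}\|f\|_{L^2(\mu)}^2 .
\end{equation*}
I would organize the cubes by scale, setting $\mathcal{D}_n=\{Q\in\mathcal{D}:\ell(Q)=2^{-n}\}$ and $V_n\colon\ell^2(\mathcal{D})\to L^2(\mu)$, $V_na=\sum_{Q\in\mathcal{D}_n}a_Q\psi_Q$; since $\bigcup_n\mathcal{D}_n=\mathcal{D}$, the displayed inequality amounts to $\bigl\|\sum_nV_n\bigr\|^2\leq C_5A^{d+\tfrac{3s}{2}+2}$. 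Two elementary observations, recorded at the outset, do all the bookkeeping. First, writing $R_Q=A\ell(Q)$: a Lipschitz function supported in $B(x_Q,R_Q)$ with $\|\psi_Q\|_{\Lip}<\ell(Q)^{-1-s/2}$ satisfies $\|\psi_Q\|_{L^\infty}<A\ell(Q)^{-s/2}$, so by $\Lambda$-niceness $\|\psi_Q\|_{L^2(\mu)}^2\leq\Lambda A^{s+2}$ and $\|\psi_Q\|_{L^1(\mu)}\leq\Lambda A^{s+1}\ell(Q)^{s/2}$. Second, for each fixed $n$, every point of $\mathbb{R}^d$ lies in at most $CA^d$ of the balls $B(x_Q,R_Q)$, $Q\in\mathcal{D}_n$.

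The single-scale estimate $\|V_n\|^2\leq C\Lambda A^{d+s+2}$ is then immediate from $|\langle f,\psi_Q\rangle_\mu|^2\leq\|\psi_Q\|_{L^2(\mu)}^2\int_{B(x_Q,R_Q)}|f|^2\,d\mu$ and the bounded overlap. The crux is the off-diagonal decay of the Gram blocks $G^{n,m}=\bigl(\langle\psi_{Q'},\psi_Q\rangle_\mu\bigr)_{Q\in\mathcal{D}_n,\,Q'\in\mathcal{D}_m}$, $n\neq m$. Assuming $\ell(Q)\geq\ell(Q')$, I would use $\int\psi_{Q'}\,d\mu=0$ to replace $\psi_Q$ by $\psi_Q-\psi_Q(x_{Q'})$ on $\supp\psi_{Q'}$ and the Lipschitz bound to get
\begin{equation*}
|\langle\psi_{Q'},\psi_Q\rangle_\mu|\leq\|\psi_Q\|_{\Lip}\,R_{Q'}\,\|\psi_{Q'}\|_{L^1(\mu)}\leq\Lambda A^{s+2}\Bigl(\tfrac{\ell(Q')}{\ell(Q)}\Bigr)^{1+\tfrac{s}{2}},
\end{equation*}
with the left side zero unless $B(x_{Q'},R_{Q'})$ meets $B(x_Q,R_Q)$. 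I would then bound $\|G^{n,m}\|$ by the Schur test, estimating its two one-sided sums in \emph{different} ways. Summing over the coarser cubes is easy: for a fixed fine cube only $CA^d$ coarse cubes contribute, so that one-sided sum is $\leq C\Lambda A^{d+s+2}2^{-|n-m|(1+s/2)}$. Summing over the finer cubes is where a crude count of the $\sim A^d2^{|n-m|d}$ contributing cubes is too wasteful; instead I would estimate $\sum_{Q'}\|\psi_{Q'}\|_{L^1(\mu)}\leq\|\psi_{Q'}\|_{L^\infty}\cdot CA^d\,\mu(B(x_Q,3R_Q))\leq C\Lambda A^{d+s+1}\ell(Q')^{-s/2}\ell(Q)^s$ (bounded overlap of the fine balls plus $\Lambda$-niceness), which gives the other one-sided sum $\leq C\Lambda A^{d+s+2}2^{-|n-m|(1-s/2)}$. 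The Schur test then yields $\|G^{n,m}\|\leq C\Lambda A^{d+s+2}2^{-|n-m|}$, the two $\tfrac s2$-terms cancelling in the geometric mean. Getting this cancellation is the step I expect to be the main obstacle: the fine-scale exponent $1-\tfrac s2$ is positive only for $s<2$, so a plain Schur test on the whole Gram matrix fails for $s\geq2$, and it is only by pairing it with the always-favourable coarse-scale exponent $1+\tfrac s2$ that one obtains honest geometric decay for every $s\in(0,d)$.

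Finally I would assemble the estimate by the Cotlar--Stein lemma applied to $\sum_nV_n$. Because $\mathcal{D}_n\cap\mathcal{D}_m=\varnothing$ for $n\neq m$, one has $V_nV_m^*=0$ when $n\neq m$, while $\|V_nV_n^*\|=\|V_n\|^2\leq C\Lambda A^{d+s+2}$ and $\|V_n^*V_m\|=\|G^{n,m}\|\leq C\Lambda A^{d+s+2}2^{-|n-m|}$ for all $n,m$. Taking $\gamma(k)=(C\Lambda A^{d+s+2})^{1/2}2^{-|k|/2}$ in the Cotlar--Stein lemma (so that $\gamma(n-m)^2$ dominates both $\|V_nV_m^*\|$ and $\|V_n^*V_m\|$) and noting $\sum_{k\in\mathbb{Z}}\gamma(k)\leq C(\Lambda A^{d+s+2})^{1/2}$, one gets $\bigl\|\sum_nV_n\bigr\|^2\leq C\Lambda A^{d+s+2}$, which (since $A>1$ and $s>0$) is at most $C\Lambda A^{d+\tfrac{3s}{2}+2}$; as a preliminary step this should be carried out for $a$ supported on finitely many scales and then passed to the limit. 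Since every bound used is uniform over $\psi_Q\in\Psi^\mu_{Q,A}$, it follows that $\Psi^\mu_Q$ $(Q\in\mathcal{D})$ is a $C_5A^{d+\tfrac{3s}{2}+2}$-Riesz family with $C_5=C\Lambda$, which is the assertion of the lemma.
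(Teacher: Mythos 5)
Your proposal is correct and rests on exactly the same estimates as the paper's proof: the mean-zero/Lipschitz oscillation bound $|\langle\psi_{Q'},\psi_Q\rangle_\mu|\leq\|\psi_Q\|_{\Lip}R_{Q'}\|\psi_{Q'}\|_{L^1(\mu)}$ for the Gram entries, the $CA^d$ count of coarser cubes meeting a fixed finer one, and the bounded-overlap-plus-$\Lambda$-niceness packing of the finer balls inside the coarser ball. The only difference is bookkeeping — you assemble the two one-sided sums via a per-block Schur test and Cotlar--Stein, while the paper splits the double sum directly by Cauchy's inequality into its sums $I$ and $II$; the geometric mean in your Schur test plays precisely the role of that AM--GM step, and both yield the same (in your case slightly sharper, $A^{d+s+2}$) constant.
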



\begin{proof}  For each $Q\in \mathcal{D}$, choose $\psi_Q\in {\Psi}_{A,Q}^{\mu}$.  Then
$$\|\psi_Q\|_{\infty}\leq\|\psi_{Q}\|_{\Lip}\cdot \diam(\supp(\mu))\leq \frac{CA}{\ell(Q)^{\tfrac{s}{2}}}.
$$
Thus,
$$\|\psi_Q\|_{L^1(\mu)} \leq CA\frac{\mu(B(x_Q, A\ell(Q))}{\ell(Q)^{\tfrac{s}{2}}}. 
$$
Notice that if $Q', Q''\in \mathcal{D}$ with $\ell(Q')\leq \ell(Q'')$, then the oscillation of $\psi_{Q''}$ on $B(x_{Q'},A\ell(Q'))$ is bounded by $\displaystyle\tfrac{A\ell(Q')}{\ell(Q'')^{1+\tfrac{s}{2}}}$.  Thus
\begin{equation}\begin{split}\nonumber|\langle \psi_{Q'}, \psi_{Q''}\rangle_{\mu}| &\leq CA^2\frac{\ell(Q')}{\ell(Q'')} \frac{\mu(B(x_{Q'}, A\ell(Q')))}{\ell(Q')^{\tfrac{s}{2}}\ell(Q'')^{\tfrac{s}{2}}}\\&\leq CA^{2+\tfrac{s}{2}}\frac{\ell(Q')}{\ell(Q'')} \frac{\sqrt{\mu(B(x_{Q'}, A\ell(Q')))}}{\ell(Q'')^{\tfrac{s}{2}}} .\end{split}\end{equation}
Also, $|\langle \psi_{Q'}, \psi_{Q''}\rangle_{\mu}|=0$ if $B(z_{Q'}, A\ell(Q'))\cap B(z_{Q''}, A\ell(Q''))=\varnothing$.

For the remainder of this proof, all sums over cubes will be taken over the dyadic lattice $\mathcal{D}$, so we shall not write this explicitly.  Now, let $(a_Q)_{Q\in \mathcal{D}}\in \ell^2(\mathcal{D})$, then
$$\Bigl\|\sum_{Q}a_Q\psi_Q\Bigl\|_{\mu}^2 \leq 2\sum_{Q', Q'': \ell(Q')\leq \ell(Q'')}|a_{Q'}||a_{Q''}| |\langle \psi_{Q'}, \psi_{Q''}\rangle_{\mu}|
$$


Appealing to the bound on $\langle \psi_{Q'}, \psi_{Q''}\rangle_{\mu}$,  Cauchy's inequality yields that $|a_{Q'}||a_{Q''}| |\langle \psi_{Q'}, \psi_{Q''}\rangle_{\mu}|$ is bounded by
$$CA^{2+\tfrac{s}{2}}\Bigl[\frac{|a_{Q'}|^2}{2}  \frac{\ell(Q')}{\ell(Q'')}+ \frac{|a_{Q''}|^2}{2}\frac{\ell(Q')}{\ell(Q'')} \frac{\mu(B(x_{Q'}, A\ell(Q')))}{\ell(Q'')^s}\Bigl].
$$
Consequently, it suffices to estimate two sums:
$$I= \sum_{\substack{Q', Q'': \ell(Q')\leq \ell(Q'')\\ B(x_{Q'}, A\ell(Q'))\cap B(x_{Q''}, A\ell(Q''))\neq\varnothing}}\!\!\!\!\!\!\!|a_{Q'}|^2 \frac{\ell(Q')}{\ell(Q'')},$$ and
$$II= \sum_{\substack{Q', Q'': \ell(Q')\leq \ell(Q'')\\ B(x_{Q'}, A\ell(Q'))\cap B(x_{Q''}, A\ell(Q''))\neq\varnothing}}\!\!\!\!\!\!\!|a_{Q''}|^2 \frac{\ell(Q')}{\ell(Q'')} \frac{\mu(B(x_{Q'}, A\ell(Q')))}{\ell(Q'')^s}.$$
Fix $Q'$ and $k\in \mathbb{Z}_+$.  There are at most $CA^d$ cubes $Q''$ with $\ell(Q'')=2^k\ell(Q')$ with $ B(x_{Q'}, A\ell(Q'))\cap B(x_{Q''}, A\ell(Q''))\neq\varnothing$. Thus
$$I\leq \sum_{Q'}|a_{Q'}|^2\!\!\!\!\!\!\!\!\!\!\!\!\!\! \sum_{\substack{Q'': \ell(Q')\leq \ell(Q'')\\ B(x_{Q'}, A\ell(Q'))\cap B(x_{Q''}, A\ell(Q''))\neq\varnothing}}\!\!\!\!\!\!\!\!\!\!\frac{\ell(Q')}{\ell(Q'')} \leq CA^d \sum_{Q'}|a_{Q'}|^2\sum_{k\in \mathbb{Z}_+}2^{-k}.
$$
For $II$, write
$$II = \sum_{Q''}|a_{Q''}|^2 \sum_{k\in \mathbb{Z}_+}2^{-k}\!\!\!\!\!\!\! \sum_{\substack{Q':\ell(Q') = 2^{-k}\ell(Q''))\\ B(x_{Q'}, A\ell(Q'))\cap B(x_{Q''}, A\ell(Q''))\neq\varnothing}} \frac{\mu(B(x_{Q'}, A\ell(Q')))}{\ell(Q'')^s}.
$$
With $k\in \mathbb{Z}_+$ fixed, the inner sum is at most
$$\frac{1}{\ell(Q'')^s}\!\!\!\int\limits_{B(x_{Q''}, 2A\ell(Q''))}\!\!\sum_{\substack{Q':\ell(Q') = 2^{-k}\ell(Q''))
}}
\!\!\!\!\bigl[\chi_{B(x_{Q'}, A\ell(Q'))}(y)\bigl]d\mu(y).
$$
But any $y\in B(x_{Q''}, A\ell(Q''))$, the integrand is bounded by $CA^d$.  Thus
$$II\leq CA^d\sum_{Q''}|a_{Q''}|^2 \sum_{k\in \mathbb{Z}_+}2^{-k}\frac{\mu(B(x_{Q''},A\ell(Q'')))}{\ell(Q'')^s}\leq A^{d+s}\sum_{Q''}|a_{Q''}|^2.
$$
This completes the proof.
\end{proof}

\section{The Fourier transform and the fractional Laplacian}

In this appendix we establish Lemmas \ref{philem} and \ref{sharmon}.  Let $K(x)=\tfrac{x}{|x|^{s+1}}$ equal the $s$-Riesz kernel.  Thus, we may take $\alpha=1$ in the smoothness property (iii) of a CZO.  Throughout this section suppose that $\mu$ is a nontrivial $\Lambda $-good measure.   Fix the ball $B'$ and the function $\eta_{B'}$ as in the definition (\ref{R1Leb}).

\begin{lem}\label{Tballint}  Suppose that $A>[\dist(0, B')+\diam (B')]$.  Then
$$\int_{B(0,A)}|\R1_{\mu}(1)|dm_d \leq C(B')R^d\log(e+ A).
$$
\end{lem}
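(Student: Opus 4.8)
The plan is to estimate, on $B(0,A)$, each of the three summands in the representation $(\ref{R1Leb})$ of $\R1_{\mu}(1)$ separately, after choosing the auxiliary ball to be $B=B(0,A)$ and taking $\varphi\in\Lip_0(B(0,4A))$ with $\varphi\equiv 1$ on $B(0,2A)$ and $0\le\varphi\le 1$. The hypothesis $A>\dist(0,B')+\diam(B')$ guarantees $B'\subset B(0,A)$ and $2B'\subset B(0,2A)$, so this is an admissible choice and the function produced by $(\ref{R1Leb})$ is indeed $\R1_{\mu}(1)$ on $B(0,A)$; in particular $\varphi\equiv 1$ on $2B'$.

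First I would dispatch the two summands that do not produce a logarithm. The term $T(\varphi\mu)(x)=\int_{\mathbb R^d}K(x-y)\varphi(y)\,d\mu(y)$ satisfies $|T(\varphi\mu)(x)|\le C\int_{B(0,4A)}|x-y|^{-s}\,d\mu(y)$, so integrating in $x$ over $B(0,A)$ and applying Lemma \ref{locl1} (with $\nu=\mu$ and radii $A$ and $5A$) bounds its $L^1(m_d)$ norm on $B(0,A)$ by $C\Lambda A^d$. The third summand is at most $\int_{B'}\eta_{B'}(z)\int_{\mathbb R^d}(1-\varphi(y))|K(x-y)-K(z-y)|\,d\mu(y)\,d\mu(z)$; since $(1-\varphi)\mu$ is $\Lambda$-nice and supported in $\mathbb R^d\setminus B(0,2A)$ while $x,z\in B(0,A)$, Lemma \ref{taildiff} with $R=A$, $R'=2A$ bounds the inner integral by $C_3 2^{-\alpha}$, uniformly in $x$; using $\int\eta_{B'}\,d\mu=1$ and integrating in $x$, this summand contributes at most $CA^d$.

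The hard part is the constant term $\int_{B'}\eta_{B'}T_{\mu}(\varphi)\,d\mu$, since the naive estimate $\|T_{\mu}(\varphi)\|_{L^2(\mu)}\le\Lambda\,\mu(\supp\varphi)^{1/2}\lesssim A^{s/2}$ loses a power of $A$. I would instead split $\varphi=\chi_{2B'}+(\varphi-\chi_{2B'})$. For $\chi_{2B'}$, Cauchy--Schwarz together with the $L^2(\mu)$ bound on $T_{\mu}$ gives $|\int_{B'}\eta_{B'}T_{\mu}(\chi_{2B'})\,d\mu|\le\|\eta_{B'}\|_{L^2(\mu)}\,\Lambda\,\mu(2B')^{1/2}\le C(B')$. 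The tail $\varphi-\chi_{2B'}$ is bounded by $1$, supported in $B(0,4A)\setminus 2B'$, hence separated from $\supp\eta_{B'}\subset B'$, so the integral representation of $\langle T_{\mu}(\cdot),\cdot\rangle_{\mu}$ valid for kernels absolutely integrable against the two functions (Section \ref{primer}) applies and yields $\bigl|\int_{B'}\eta_{B'}(z)T_{\mu}(\varphi-\chi_{2B'})(z)\,d\mu(z)\bigr|\le\int_{B'}\eta_{B'}(z)\int_{B(0,4A)\setminus 2B'}C|z-y|^{-s}\,d\mu(y)\,d\mu(z)$. Decomposing $B(0,4A)\setminus 2B'$ into the $O(\log(A/r'))$ dyadic annuli about the centre of $B'$ (with $r'$ the radius of $B'$), $\Lambda$-niceness bounds the contribution of each annulus by a fixed constant, so the whole tail integral is $\le C(B')\log(e+A)$.

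Combining: the constant term is $\le C(B')\log(e+A)$, and multiplying by $m_d(B(0,A))=\omega_d A^d$ and adding the earlier two bounds yields $\int_{B(0,A)}|\R1_{\mu}(1)|\,dm_d\le C(B')A^d\log(e+A)$, which is the asserted estimate. Apart from the splitting trick for the constant term, every step is a routine application of Lemmas \ref{locl1} and \ref{taildiff} together with the $L^2(\mu)$ boundedness of $T_{\mu}$.
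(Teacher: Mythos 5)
Your proposal is correct and follows essentially the same route as the paper: estimate the local term via Lemma \ref{locl1}, the far-tail term via Lemma \ref{taildiff}, and handle the constant $\int_{B'}\eta_{B'}T_{\mu}(\varphi)\,d\mu$ by splitting off $\chi_{2B'}$ (controlled by $L^2(\mu)$ boundedness) from the annular part (controlled by $\Lambda$-niceness, yielding the logarithm). The only differences are cosmetic — a Lipschitz cutoff in place of $\chi_{B(0,2A)}$ and a constant rather than $C(B')/A$ bound on the tail term, neither of which affects the conclusion — and you correctly read the $R^d$ in the statement as $A^d$.
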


\begin{proof}
For $x\in B(0,A)$, we may write
\begin{equation}\begin{split}\nonumber\R1_{\mu}(1)(x)= &R(\chi_{B(0,2A)}\mu)(x) - \int_{B'}\eta_{B'} R_{\mu}(\chi_{B(0,2A)})d\mu \\
&+ \int_{\mathbb{R}^d\backslash B(0, 2A)}\int_{B'}\eta_{B'}(z)\bigl[K(x-y)-K(z-y) \bigl]d\mu(z)d\mu(y).
\end{split}\end{equation}
We shall estimate the integral of each term over $B(0,A)$ respectively.  First note that by Lemma \ref{locl1}, we have
$$
\int_{B(0,A)} |R(\chi_{B(0,2A)}\mu)|dm_d\leq CA^d.
$$
For the second term, we write
$$\int_{B'}\eta_{B'} R_{\mu}(\chi_{B(0,2A)}) = \int_{B'}\eta_{B'} R_{\mu}(\chi_{2B'})d\mu + \int_{B'}\eta_{B'} R_{\mu}(\chi_{B(0,2A)\backslash 2B'})d\mu.
$$
These integrals have a sum in absolute value no larger than $C(B')\log (e+A)$ (since $\mu$ is $\Lambda $-nice).
As for the remaining term defining $\R1_{\mu}(1)$, Lemma \ref{taildiff} applies to yield
$$
\int_{\mathbb{R}^d\backslash B(0, 2A)}\int_{B'}\eta_{B'}(z)\bigl|K(x-y)-K(z-y) \bigl|d\mu(z)d\mu(y)\leq \frac{C(B')}{A},
$$
for any $x\in B(0,A)$.
The lemma follows.
\end{proof}

Define $\mu_N = \chi_{B(0, N)}\mu$.

\begin{lem}\label{RL1est}
Set $A>0$. Then for all sufficiently large $N>0$,
$$\|\R1_{\mu}(1) - \R1_{\mu_N}(1)\|_{L^{\infty}(B(0,A))}\leq \frac{C(B')}{N}.
$$
\end{lem}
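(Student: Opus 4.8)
The plan is to express both $\R1_{\mu}(1)$ and $\R1_{\mu_N}(1)$ via the formula (\ref{R1Leb}) with one common choice of auxiliary data, and to observe that, for $N$ large, the difference collapses to a single tail integral controlled by Lemma \ref{taildiff}. First I would fix a radius $R_0=R_0(A,B')$ with $B(0,A)\cup B'\subset B(0,R_0)$, set $B=B(0,R_0)$, and choose a compactly supported $\varphi\in L^2(\mu)$ with $\varphi\equiv 1$ on $2B=B(0,2R_0)$, $0\le\varphi\le 1$, and $\supp(\varphi)\subset B(0,3R_0)$; crucially these data do not depend on $N$. I would also note that $\mu_N=\chi_{B(0,N)}\mu$ is $\Lambda$-good for each $N$ (extending $f\in L^2(\mu_N)$ by zero gives $\|T_{\mu_N,\delta}f\|_{L^2(\mu_N)}\le\|T_{\mu,\delta}(f\chi_{B(0,N)})\|_{L^2(\mu)}\le\Lambda\|f\|_{L^2(\mu_N)}$), so that, as soon as $B'\subset B(0,N)$ (making $\int_{B'}\eta_{B'}\,d\mu_N=\int_{B'}\eta_{B'}\,d\mu=1$), formula (\ref{R1Leb}) with data $B,\varphi,B',\eta_{B'}$ legitimately defines $\R1_{\mu_N}(1)$ on $B$.

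Next, for $N\ge 3R_0$, I would compare the three terms of (\ref{R1Leb}) for $\mu$ and for $\mu_N$. Since $\supp(\varphi)\subset B(0,N)$ we have $\varphi\mu=\varphi\mu_N$, so the first term $R(\varphi\mu)(x)$ is the same; since $\varphi$ and $\eta_{B'}$ are supported inside $B(0,N)$, the regularized scalars $\int_{B'}\eta_{B'}T_{\mu,\delta}(\varphi)\,d\mu$ and $\int_{B'}\eta_{B'}T_{\mu_N,\delta}(\varphi)\,d\mu_N$ coincide for every $\delta>0$, hence so do their limits, and the second term is the same; and in the third term the inner integral $\int_{B'}\eta_{B'}(z)[K(x-y)-K(z-y)]\,d\mu(z)$ is unchanged by replacing $\mu$ with $\mu_N$, while the outer integral changes only through $\mu-\mu_N=\chi_{\mathbb{R}^d\setminus B(0,N)}\mu$. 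This gives, for $x\in B(0,A)$,
\[
\R1_{\mu}(1)(x)-\R1_{\mu_N}(1)(x)=\int_{\mathbb{R}^d\setminus B(0,N)}(1-\varphi(y))\int_{B'}\eta_{B'}(z)\bigl[K(x-y)-K(z-y)\bigr]\,d\mu(z)\,d\mu(y).
\]

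Finally I would apply Fubini's theorem (justified by Lemma \ref{taildiff}) and then Lemma \ref{taildiff} itself to the inner $y$-integral: for $\nu=\chi_{\mathbb{R}^d\setminus B(0,N)}\mu$, which is $\Lambda$-nice with $\dist(B(0,R_0),\supp\nu)\ge N-R_0\ge N/2$, the admissible choices $R=R_0$, $R'=N$ (and $\alpha=1$ for the Riesz kernel) give $\int_{\mathbb{R}^d\setminus B(0,N)}|K(x-y)-K(z-y)|\,d\mu(y)\le C_3R_0/N$ for all $x\in B(0,A)$ and $z\in B'$. Using $0\le 1-\varphi\le 1$ and $\int_{B'}\eta_{B'}\,d\mu=1$ we get $|\R1_{\mu}(1)(x)-\R1_{\mu_N}(1)(x)|\le C_3R_0/N$ uniformly on $B(0,A)$, which is the assertion with $C(B')=C_3R_0$ (depending also on the fixed $A$ through $R_0$). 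I expect the only delicate point to be the bookkeeping of the second paragraph: one must pin down $R_0$ and $\varphi$ independently of $N$ and then check that every portion of (\ref{R1Leb}) that can differ between $\mu$ and $\mu_N$ sits where the two measures agree. Once that is done, the estimate is an immediate consequence of Lemma \ref{taildiff}.
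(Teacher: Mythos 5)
Your proof is correct and follows essentially the same route as the paper: both reduce the difference to the single tail integral $\int_{\mathbb{R}^d\setminus B(0,N)}\int_{B'}\eta_{B'}(z)[K(x-y)-K(z-y)]\,d\mu(z)\,d\mu(y)$ and then apply Lemma \ref{taildiff} with $\alpha=1$. The only (cosmetic) difference is that the paper obtains this identity by choosing the $N$-dependent ball $B=B(0,N/2)$ in (\ref{R1Leb}), whereas you fix the auxiliary data once and compare the three terms directly; your version also makes explicit the dependence of the constant on $A$, which the paper suppresses.
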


\begin{proof} The proof is essentially the same as that of Lemma \ref{1closetophi}.  Fix $x\in B(0,A)$, and choose $B=B(0, \tfrac{N}{2})$ in (\ref{R1Leb}) (permitted provided that $N$ is large enough).  Then
$$\R1_{\mu}(1)(x)-\R1_{\mu_N}(1)(x) = \int_{\mathbb{R}^d\backslash B(0, N)}\int_{B'}\eta_{B'}\bigl[K(x-y)-K(z-y)] d\mu(z)d\mu(y),
$$
provided that $N$ is sufficiently large.  But this quantity is bounded in absolute value by $\tfrac{C(B')}{N}$.
\end{proof}

Let us now prove Lemma \ref{philem}

\begin{proof}[Proof of Lemma \ref{philem}]  We may assume that $x=0$, and $r=1$.

First let $\sigma$ be a finite measure.  Fix a smooth compactly supported nonnegative bump function $f$ that is identically $1$ on $B(0,1)$.

Let $g(x) = \mathcal{F}^{-1}( b\xi|\xi|^{d-1-s}\hat{f}(\xi))(x)$, where $b\in \mathbb{C}\backslash\{0\}$ is chosen so that $R^{*}(g)=f$.  Note that $g$ has $m_d$-mean zero in each component, since the vector field $\xi|\xi|^{d-1-s}$ vanishes when $\xi=0$.  An elementary decomposition on the Fourier side yields the estimate
$$|g(x)|\leq \frac{C}{(1+|x|)^{2d-s}}.
$$
Therefore $g\in L^1(m_d)\cap L^2(m_d)$, and by the construction of $g$ we have
$$\int_{\mathbf{R}^d} fd\mu = \int_{\mathbf{R}^d} R^{*}(g m_d) d\sigma = \int_{\mathbf{R}^d}R(\sigma)\cdot g dm_d.
$$
Using the mean zero property of $g$ and the estimate on its absolute value,
$$\sigma(B(0,1))\leq C\int_{\mathbb{R}^d} \frac{|R(\sigma)(y)-\widetilde{\Gamma}|}{(1+|y|)^{2d-s}}dm_d(y).
$$
for any $\widetilde{\Gamma}\in \mathbb{R}^d$.  Replacing $\sigma$ by $\mu_N$ with $N>1$, and $\widetilde{\Gamma}= \Gamma - \int_{B'}\eta_{B'} \R1_{\mu_N}(1)d\mu$, yields
$$\mu(B(0,1))\leq C\int_{\mathbb{R}^d} \frac{|\R1_{\mu_N}(1)(y)-\Gamma|}{(1+|y|)^{2d-s}}dm_d(y).
$$
It remains to pass to the limit as $N\rightarrow \infty$.

Fix $\eps>0$.  For any $A>0$, a standard application of Fubini's theorem yields that
$$\int_{\mathbb{R}^d\backslash B(0,A)} \frac{|\R1_{\mu_N}(1)(y)|+|\R1_{\mu}(1)(y)|}{(1+|y|)^{2d-s}}dm_d(y)$$ is bounded by a constant multiple of  $$\int_A^{\infty}\frac{1}{T^{2d-s}}\int_{B(0,T)} |\R1_{\mu_N}(1)(y)|+|\R1_{\mu}(1)(y)|dm_d(y) \frac{dT}{T}.
$$
Appealing to Lemma \ref{Tballint}, we estimate this quantity by $C(B')\int_{A}^{\infty}\tfrac{\log(e+ T)}{T^{d-s}}\tfrac{dT}{T}$, which is smaller than $\eps$ if $A$ is chosen sufficiently large.
But then provided that $N$ is sufficiently large, Lemma \ref{RL1est} yields that
$$\int_{B(0,A)}|\R1_{\mu_N}(1)(y) - \R1_{\mu}(1)(y)|dm_d(y) < \eps.
$$
The desired estimate follows.
\end{proof}

We now turn our attention to Lemma \ref{sharmon}.

\begin{lem}\label{secondder}  If $\dist(0, \supp(\mu))\geq 1$, then
$$|D^2(\widetilde{R}_{\mu}(1))(z)|\leq C \text{ for any }z\in B(0, \tfrac{1}{2}).
$$
\end{lem}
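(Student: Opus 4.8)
The plan is to read the bound directly off the pointwise formula (\ref{R1Leb}) for $\R1_{\mu}(1)$, exploiting that the hypothesis $\dist(0,\supp(\mu))\geq 1$ keeps all the relevant kernel singularities at a definite distance from $B(0,\tfrac12)$. Indeed, for $x\in B(0,\tfrac12)$ one has $\dist(x,\supp(\mu))\geq\tfrac12$, so every kernel $K(x-y)$ occurring in (\ref{R1Leb}) is evaluated at a point $x-y$ with $|x-y|\geq\tfrac12$; hence, viewed as a function of $x$ on $B(0,\tfrac12)$, $\R1_{\mu}(1)$ is the sum of three $C^{\infty}$ (indeed real-analytic) terms. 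For the $s$-Riesz kernel $K(w)=w/|w|^{s+1}$, which is smooth on $\mathbb{R}^d\setminus\{0\}$ and homogeneous of degree $-s$, we have $|DK(w)|\leq C|w|^{-s-1}$ and $|D^2K(w)|\leq C|w|^{-s-2}$. Thus, for $x$ in a small ball $B(x_0,\tfrac14)$ about a fixed $x_0\in B(0,\tfrac12)$ and $y$ ranging over $\supp(\mu)$ (so $|x-y|\geq\tfrac14$), the integrands $DK(x-y)$ and $D^2K(x-y)$ are dominated, uniformly in such $x$, by the $\mu$-integrable functions $C|x-y|^{-s-1}$ and $C|x-y|^{-s-2}$ — integrability by Lemma \ref{tailest}. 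This legitimizes differentiating twice under the integral sign in each of the three terms of (\ref{R1Leb}).

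I would then differentiate term by term. The middle term $-\int_{B'}\eta_{B'}R_{\mu}(\varphi)\,d\mu$ is a constant in $x$, so it contributes nothing. For the first term, $D^2_xR(\varphi\mu)(x)=\int(D^2K)(x-y)\varphi(y)\,d\mu(y)$, and since $0\leq\varphi\leq1$,
\[
|D^2_xR(\varphi\mu)(x)|\leq C\int_{\mathbb{R}^d\setminus B(x,1/2)}\frac{d\mu(y)}{|x-y|^{s+2}}\leq C
\]
by Lemma \ref{tailest} applied with $\eps=2$ and $r=\tfrac12$. For the tail term, I would first rewrite the inner $z$-integral: $\int_{B'}\eta_{B'}(z)[K(x-y)-K(z-y)]\,d\mu(z)=K(x-y)-G(y)$, where $G(y):=\int_{B'}\eta_{B'}(z)K(z-y)\,d\mu(z)$ is independent of $x$ (here $\int_{B'}\eta_{B'}\,d\mu=1$ is used). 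Differentiating in $x$ kills $G$ and leaves $\int_{\mathbb{R}^d}(1-\varphi(y))(D^2K)(x-y)\,d\mu(y)$, which, since $0\leq 1-\varphi\leq1$, is again bounded in absolute value by $C\int_{\mathbb{R}^d\setminus B(x,1/2)}|x-y|^{-s-2}\,d\mu(y)\leq C$. Adding the three contributions gives $|D^2\R1_{\mu}(1)(x_0)|\leq C$ for every $x_0\in B(0,\tfrac12)$, with $C$ depending only on $d$, $s$ and $\Lambda$.

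I do not expect a serious obstacle: the content is simply that, away from $\supp(\mu)$, $\R1_{\mu}(1)$ is a smooth function whose derivatives are controlled by the $s$-growth bound on $\mu$. The only points needing care are the routine justification of differentiating under the integral sign in the tail term — handled by the uniform $\mu$-integrable domination of the first and second $x$-derivatives described above — together with a brief appeal to Lemma \ref{taildiff}, which guarantees that the double integral over $(z,y)$ converges absolutely, so that the $z$-integral may legitimately be performed first.
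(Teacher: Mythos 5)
Your proof is correct and follows essentially the same route as the paper, which simply observes that $\tfrac{1}{|x-\cdot|^{q}}\in L^1(\mu)$ for $q>s$ (uniformly for $x\in B(0,\tfrac12)$) and invokes differentiation under the integral sign. Your write-up merely fills in the routine details that the paper leaves to the reader.
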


\begin{proof}
Since $\tfrac{1}{|x-\cdot|^{q}}\in L^1(\mu)$ for any $x\in B(0,\tfrac{1}{2})$ and $q>s$, with $L^1(\mu)$ norm depending only on $\Lambda $, $s$, and $q$, the desired estimate readily follows from a simple justification of differentiation under the integral.
\end{proof}

\begin{proof}[Proof of Lemma \ref{sharmon}]  By an appropriate resealing, we may assume that $B(0,1)\cap\supp(\mu)=\varnothing$.

We shall need a well known formula for the fractional Laplacian (see, for example \cite{Lan},\cite{ENV}): if $g\in C^{\infty}_{0}(\mathbb{R}^d)$, then
$$
\nabla g(x) = b P.V. \int_{\mathbb{R}^d}\frac{R(gm_d)(x)- R(gm_d)(y)}{|x-y|^{2d+1-s}}dm_d(y),
$$
where $b\in \mathbb{R}\backslash \{0\}$.

Suppose that $\psi\in C^{\infty}_0(B(0,1))$ satisfies $\int_{\mathbb{R}^d} \psi dm_d=1$.  For $\rho>0$ and $N>0$, set $\psi_{\rho}=\rho^{-n}\psi(\rho\,\cdot\,)$, $u^{(N)}_{\rho} =  \psi_{\rho}*\R1_{\mu_N}(1)$,   $u^{(N)}= \R1_{\mu_N}(1)$, and $u = \R1_{\mu}(1)$.  If $\rho<\tfrac{1}{2}$, then $\psi_{\rho}*\mu_N(0)=0$, so
$$0 = P. V. \int_{\mathbb{R}^d}\frac{u_{\rho}^{(N)}(0) - u_{\rho}^{(N)}(y)}{|y|^{2d+1-s}}dm_d(y),
$$
for any $N>0$.

Let $\eps>0$.  Due to the second derivative estimate (Lemma \ref{secondder}), if $\rho<\tfrac{1}{4}$ and $r<\tfrac{1}{4}$, then for any $N>0$,
\begin{equation}\begin{split}\nonumber \Bigl|P. V. \int_{B(0,r)}\frac{u^{(N)}_{\rho}(0) - u^{(N)}_{\rho}(y)}{|y|^{2d+1-s}}&dm_d(y)\Bigl|+ \Bigl|P. V. \int_{B(0,r)}\frac{u(0) - u(y)}{|y|^{2d+1-s}}dm_d(y)\Bigl|\\
&\leq C\int_0^r\frac{t^2}{t^{2d+1-s}}t^d\frac{dt}{t}\leq Cr^{1+s-d},
\end{split}\end{equation}
which is smaller than $\eps$ if $r$ is chosen small enough.

Next, as in the proof of Lemma \ref{philem} above, we apply Lemma \ref{Tballint} to find a radius $A>1$ such that
$$\int_{\mathbb{R}^d\backslash B(0,A)} \frac{|u_{\rho}^{(N)}(y)|+|u_{\rho}^{(N)}(0)|+ |u(y)|+ |u(0)|}{|y|^{2d+1-s}}dm_d(y)<\eps.
$$
for any $N>0$.  On the other hand, from Lemma \ref{RL1est}, we infer that for all sufficiently large $N$, we have
$$\int_{B(0,A)}|u^{(N)}(y) - u(y)|dm_d(y) \leq \eps r^{2d+1-s}.
$$
Of course also have that $|\R1_{\mu_{N}}(0)-\R1_{\mu}(1)(0)|\leq \tfrac{C(B')}{N} \leq \eps r^{2d+1-s}$ as long as $N$ is reasonably large.

With $A$ and $R$ now fixed,
$$\lim_{\rho\rightarrow 0}\int_{B(0,A)} |u^{(N)}_{\rho}(y) - u^{(N)}(y)|dm_d(y)= 0,$$
and $u_{\rho}^{(N)}(0) \rightarrow u^{(N)}(0)$ as $\rho\rightarrow 0$.  Thus, the triangle inequality yields
$$\Bigl|P. V. \int_{\mathbb{R}^d}\frac{u(0) - u(y)}{|y|^{2d+1-s}}dm_d(y)\Bigl|\leq 3\eps,
$$
as required.
\end{proof}

 \end{document}